\theoremstyle{plain}
\newtheorem{theorem}{Theorem}
\newtheorem{proposition}[theorem]{Proposition}
\newtheorem{lemma}[theorem]{Lemma}
\newtheorem{corollary}[theorem]{Corollary}
\theoremstyle{plain}
\newtheorem{definition}[theorem]{Definition}
\newtheorem{remark}[theorem]{Remark}
\providecommand{\Supp}{\operatorname{supp}}                            
\providecommand{\supp}{\Supp}
\renewcommand{\Im}{\operatorname{Im}}             
\providecommand{\argmin}{\operatorname*{\arg\min}}  
\providecommand{\Id}{\Op{Id}}                     
\providecommand{\diag}{\operatorname{diag}}
\providecommand{\CC}{{\cal C}}
\providecommand{\CE}{{\cal E}}
\providecommand{\CN}{{\cal N}}
\providecommand{\CP}{{\cal P}}
\providecommand{\CT}{{\cal T}}
\providecommand{\CV}{{\cal V}}
\providecommand{\CX}{{\cal X}}
\providecommand{\CY}{{\cal Y}}
\providecommand{\bbE}{\mathbb{E}}
\providecommand{\bbR}{\mathbb{R}}
\providecommand*{\abs}[1]{\left|{#1}\right|} 
\providecommand*{\N}[1]{\left\|{#1}\right\|} 
\providecommand*{\Nnormal}[1]{\|{#1}\|} 
\newcommand*{\SN}[1]{\left|{#1}\right|}      
\providecommand*{\abs}[1]{\left|{#1}\right|} 
\newcommand*{\Op}[1]{\mathsf{#1}} 
\newif\ifrevised
\newcommand{\revised}[1]{%
	\ifrevised
		\color{purple} #1 \color{black} %
	\else
		#1%
	\fi}
\newcommand{\overbar}[1]{\makebox[0pt]{$\phantom{#1}\mkern 1.5mu\overline{\mkern-1.5mu\phantom{#1}\mkern-1.5mu}\mkern 1.5mu$}#1}
\renewcommand{\underbar}[1]{\makebox[0pt]{$\phantom{#1}\mkern 1.5mu\underline{\mkern-1.5mu\phantom{#1}\mkern-1.5mu}\mkern 1.5mu$}#1}
\DeclareMathOperator*{\Law}{Law}
\DeclareMathOperator{\divergence}{div}
\newcommand{\globmin}{x^*}
\newcommand{\minobj}{\underbar \CE}
\newcommand{\indivmeasure}[0]{\varrho} 
\newcommand{\empmeasurenoarg}[0]{\widehat\rho^N}
\newcommand{\empmeasure}[1]{\widehat\rho_{#1}^N}
\newcommand{\omegaa}[0]{\omega_{\alpha}}
\newcommand{\conspoint}[1]{y_{\alpha}({#1})}
\newcommand{\conspointnoarg}{y_{\alpha}}
\title{\usefont{OT1}{bch}{b}{n}
	\huge Leveraging Memory Effects and Gradient Information in Consensus-Based Optimization: On Global Convergence in Mean-Field Law \\
}
\date{}
\author[1,2]{Konstantin Riedl\thanks{Email: \texttt{konstantin.riedl@ma.tum.de}}}
\affil[1]{Technical University of Munich, School of Computation, Information and Technology, Department of Mathematics, Munich, Germany}
\affil[2]{Munich Center for Machine Learning, Munich, Germany}
\begin{document}
\maketitle
\begin{abstract}
\noindent
	In this paper we study consensus-based optimization~(CBO), a versatile, flexible and customizable optimization method suitable for performing nonconvex and nonsmooth global optimizations in high dimensions.
	CBO is a multi-particle metaheuristic, which is effective in various applications and at the same time amenable to theoretical analysis thanks to its minimalistic design.
	The underlying dynamics, however, is flexible enough to incorporate different mechanisms widely used in evolutionary computation and machine learning, as we show by analyzing a variant of CBO which makes use of memory effects and gradient information.
	We rigorously prove that this dynamics converges to a global minimizer of the objective function in mean-field law for a vast class of functions under minimal assumptions on the initialization of the method.
	The proof in particular reveals how to leverage further, in some applications advantageous, forces in the dynamics without loosing provable global convergence.
	To demonstrate the benefit of the herein investigated memory effects and gradient information in certain applications, we present numerical evidence for the superiority of this CBO variant in applications such as machine learning and compressed sensing, which en passant widen the scope of applications of CBO.
\end{abstract}

{\noindent\small{\textbf{Keywords:} high-dimensional global optimization, derivative-free optimization, nonsmoothness, nonconvexity, metaheuristics, consensus-based optimization, mean-field limit, Fokker-Planck equations, memory effects, gradient information}}\\

{\noindent\small{\textbf{AMS subject classifications:} 65K10, 90C26, 90C56, 35Q90, 35Q84}}

\section{Introduction} \label{sec:introduction}

Interacting multi-particle systems are ubiquitous in a wide variety of scientific disciplines with application areas reaching from atomic scales over the human scale to the astronomical scale.
For instance, large-scale multi-agent models are used to understand the coordinated movement of animal groups~\cite{parrish1999complexity,couzin2005effective} or crowds of people~\cite{cristiani2011multiscale,albi2015invisible}.
Especially fascinating in this context is that such complex and often intelligent behavior\,---\,phenomena known as self-organization and swarm intelligence\,---\,emerge from seemingly simple rules of interaction~\cite{vicsek2012collective}.
This intriguing capabilities have drawn researchers' attention towards designing interacting particle systems for specific purposes in various disciplines.
In applied mathematics in particular, agent-based optimization algorithms look back on a long and successful history of empirically achieving state-of-the-art performance on challenging global unconstrained problems of the form
\begin{equation*}
	\globmin = \argmin_{x\in\bbR^d}\CE(x).
\end{equation*}
Here, $\CE : \bbR^d\rightarrow \bbR$ denotes a possibly nonconvex and nonsmooth high-dimensional objective function, whose global minimizer~$\globmin$ is assumed to exist and be unique for the remainder of this work.
Well-known representatives of this family are Evolutionary Programming~\cite{fogel2006evolutionary}, Genetic Algorithms~\cite{holland1992adaptation}, Particle Swarm Optimization \cite{kennedy1995particle} and Ant Colony Optimization~\cite{dorigo2005ant}.
They belong to the broad class of so-called metaheuristics~\cite{blum2003metaheuristics,back1997handbook}, which are methods orchestrating an interaction between local improvement procedures and global strategies, deterministic and stochastic processes, to eventually design an efficient and robust procedure for searching the solution space of the objective function~$\CE$.

Motivated by both the substantiated success of metaheuristics in applications and the lack of rigorous theoretical guarantees about their convergence and performance, the authors of~\cite{pinnau2017consensus} proposed consensus-based optimization~(CBO), which follows the spirit of metaheuristics but allows for a rigorous theoretical analysis~\cite{carrillo2018analytical,fornasier2021consensus,carrillo2019consensus,fornasier2021convergence,ha2020convergenceHD,ha2021convergence}. 
By taking inspiration from consensus formation in opinion dynamics~\cite{hegselmann2002opinion}, CBO methods use $N$ particles~$X^1,\dots,X^N$ to explore the energy landscape of the objective~$\CE$ and to eventually form a consensus about the location of the global minimizer~$\globmin$.
In its original form~\cite{pinnau2017consensus}, the dynamics of each particle~$X^i$, which is governed by a stochastic differential equation~(SDE), is subject to two competing forces.
A deterministic drift term pulls the particles towards a so-called consensus point, which is an instantaneously computed weighted average of the positions of all particles and approximates the global minimizer~$\globmin$ the best possible given the currently available information.
The resulting contractive behavior is counteracted by the second term which is stochastic in nature and thereby features the exploration of the energy landscape of the objective function.
Its magnitude and therefore its explorative power scales with the distance of the individual particle from the consensus point, which encourages particles far away to explore larger regions of the domain, while particles already close advance their position only locally.

In this work, motivated by the numerical evidence presented below as well as other recent papers such as~\cite{grassi2020particle,totzeck2020consensus,schillings2022Ensemble}, we consider a more elaborate variant of this dynamics which exhibits the two following additional drift terms.
\begin{itemize}[leftmargin=*,labelindent=5ex,labelsep=3ex,topsep=0.5ex]
	\item \noindent The first is a drift towards the historical best position of the particular particle.
		To store such information, we follow the work~\cite{grassi2020particle}, where the authors introduce for each particle an additional state variable~$Y^i$, which can be regarded as the memory of the respective particle~$X^i$.
		In contrast to the original dynamics, an individual particle is therefore described by the tuple~$(X^i,Y^i)$.
		Moreover, the consensus point is no longer computed from the instantaneous positions~$X^i$, but the historical best positions~$Y^i$.
	\item \noindent The second term is a drift in the direction of the negative gradient of $\CE$ evaluated at the current position of the respective particle~$X^i$.
\end{itemize}
%
Both terms are accompanied by associated noise terms.
We now make the CBO dynamics with memory effects and gradient information rigorous by providing a formal description of the interacting particle system.
A visualization of the dynamics with all relevant quantities and forces is provided in Figure~\ref{fig:illustration_dynamics}.
Given a finite time horizon~$T>0$, and user-specified parameters $\alpha,\beta,\theta,\kappa,\lambda_1,\sigma_1>0$ and $\lambda_2,\lambda_3,\sigma_2,\sigma_3\geq0$, the dynamics is given by the system of SDEs
\begin{subequations} \label{eq:CBO_micro_with_memory}
\begin{align}
	&\begin{aligned} \label{eq:CBO_micro_with_memory_X}
	dX_{t}^i = \begin{aligned}[t] 
		 &\!-\lambda_1\!\left(X_{t}^i-y_{\alpha}(\empmeasure{Y,t})\right) dt 
		    -\lambda_2\!\left(X_{t}^i-Y_{t}^i\right) dt
		    -\lambda_3\nabla\CE(X_{t}^i) \,dt\\
		 &\!+\sigma_1 D\!\left(X_{t}^i-y_{\alpha}(\empmeasure{Y,t})\right) dB_{t}^{1,i}
		    +\sigma_2 D\!\left(X_{t}^i-Y_{t}^i\right) dB_{t}^{2,i}
		    +\sigma_3 D\!\left(\nabla\CE(X_{t}^i)\right) dB_{t}^{3,i},
		\end{aligned}
	\end{aligned}\\
	&\mspace{1mu}dY_{t}^i = \kappa \left(X_{t}^i-Y_{t}^i\right) S^{\beta,\theta}\!\left(X_{t}^i, Y_{t}^i\right) dt
	\label{eq:CBO_micro_with_memory_Y}
\end{align}
\end{subequations}
for $i=1,\dots,N$ and where $((B_{t}^{m,i})_{t\geq0})_{i=1,\dots,N}$ are independent standard Brownian motions in~$\bbR^d$ for $m\in\{1,2,3\}$.
The system is complemented with independent initial data~$(X_0^i,Y_0^i)_{i=1,\dots,N}$, typically such that \revised{$X_0^i=Y_0^i$} for all $i=1,\dots,N$.
A numerical implementation of the scheme usually originates from an Euler-Maruyama time discretization of Equation~\eqref{eq:CBO_micro_with_memory}.
\begin{figure}[htp!]
	\centering
	\includegraphics[width=0.75\textwidth, trim=0 327 0 318, clip]{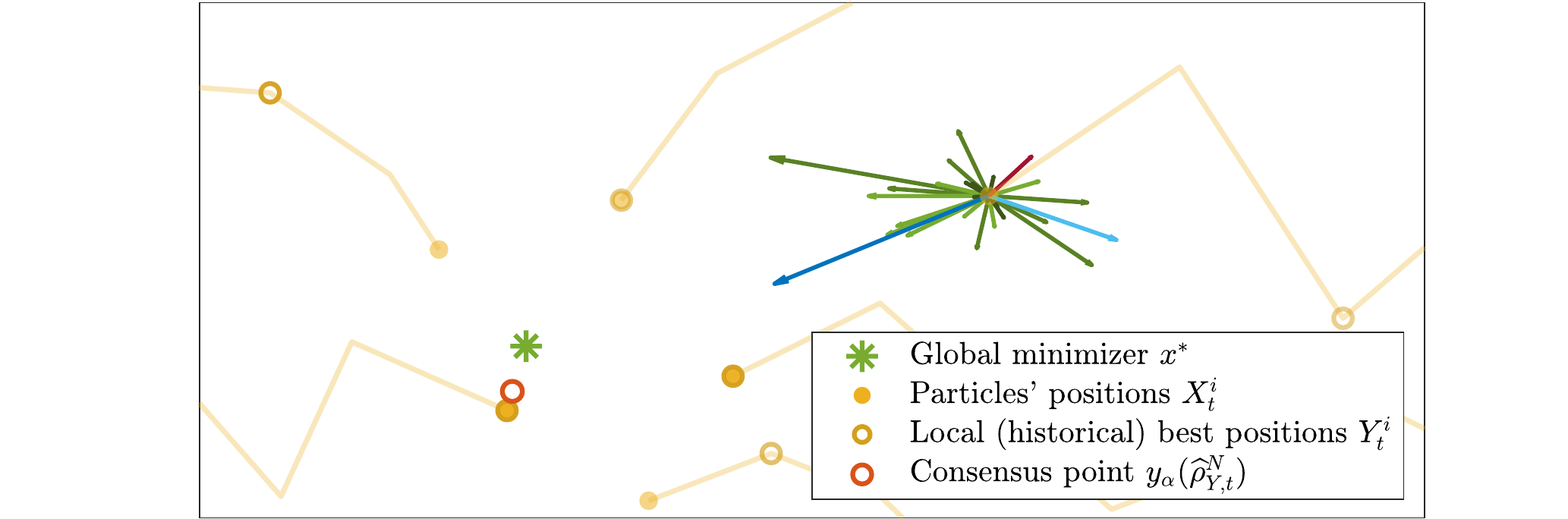}
	\caption{A visualization of the CBO dynamics~\eqref{eq:CBO_micro_with_memory} with memory effects and gradient information.
		Particles with positions~$X^1,\dots,X^N$ (yellow dots with their trajectories) explore the energy landscape of the objective~$\CE$ in search of the global minimizer~$\globmin$ (green star).
		Each particle stores its local historical best position~$Y^i_t$ (yellow circles).
		The dynamics of the position~$X^i_t$ of each particle is governed by three deterministic terms with associated random noise terms (visualized by depicting eight possible realizations with differently shaded green arrows).
		A global drift term (dark blue arrow) drags the particle towards the consensus point~$y_\alpha(\widehat{\rho}_{Y,t}^N)$ (orange circle), which is computed as a weighted (visualized through color opacity) average of the particles' historical best positions.
		A local drift term (light blue arrow) imposes movement towards the respective local best position~$Y^i_t$.
		A gradient drift term (purple arrow) exerts a force in the direction~$-\nabla\CE(X^i_t)$.}
	\label{fig:illustration_dynamics}
\end{figure}
The first term appearing in the SDE for the position $X_{t}^i$, i.e., in the first line of Equation~\eqref{eq:CBO_micro_with_memory_X}, is the drift towards the consensus point
\begin{align} \label{eq:consensus_point}
	\conspoint{\empmeasure{Y,t}}
	:= \int y \frac{\omegaa(y)}{\N{\omegaa}_{L_1(\empmeasure{Y,t})}}\,d\empmeasure{Y,t}(y),
	\quad
	\text{with}\quad \omegaa(y) := \exp(-\alpha \CE(y)).
\end{align}
Here, $\empmeasure{Y,t}$ denotes the random empirical measure of the particles' historical best positions, i.e., $\empmeasure{Y,t}:=\revised{\frac{1}{N}}\sum_{i=1}^N\delta_{Y_{t}^i}$.
Definition~\eqref{eq:consensus_point} is motivated by the fact that $\conspoint{\empmeasure{Y,t}}\approx\argmin_{i\in\{1,\dots,N\}}\CE(Y_t^i)$ as $\alpha\rightarrow\infty$ under reasonable assumptions.
The first term in the second line of Equation~\eqref{eq:CBO_micro_with_memory_X} is the with the consensus drift associated diffusion term, which injects randomness into the dynamics and thereby features the explorative nature of the algorithm.
The two commonly studied diffusion types are isotropic~\cite{pinnau2017consensus,carrillo2018analytical,fornasier2021consensus} and anisotropic~\cite{carrillo2019consensus,fornasier2021convergence} diffusion with
\begin{equation} \label{eq:diffustion_types}
	D\!\left(\,\cdot\,\right) =
	\begin{cases}
		\N{\,\cdot\,}_2 \Id ,& \text{for isotropic diffusion,}\\
		\diag\left(\,\cdot\,\right)\!,& \text{for anisotropic diffusion},
			\end{cases}
\end{equation} 
where $\Id\in\bbR^{d\times d}$ is the identity matrix and $\diag:\bbR^{d} \rightarrow \bbR^{d\times d}$ the operator mapping a vector onto a diagonal matrix with the vector as its diagonal.
\revised{Despite the potential of the dynamics getting trapped in affine subspaces,}
the coordinate-dependent scaling of anisotropic diffusion has proven to be beneficial for the performance of the method in high-dimensional applications by allowing for dimension-independent convergence rates~\cite{carrillo2019consensus,fornasier2021convergence}.
For this reason, we restrict our attention to the case of anisotropic noise in what follows.
Nevertheless, theoretically similar results as the ones presented in this work can be obtained also for the isotropic case.
The second term in the first line of Equation~\eqref{eq:CBO_micro_with_memory_X} is the drift towards the historical best position of the respective particle.
In contrast to the global nature of the consensus drift, which incorporates information from all $N$~particles, this term depends only on the past of the specific particle.
To store such information about the history of each particle~\cite{grassi2020particle}, an additional state variable~$Y^i$ is introduced for every particle, which evolves according to Equation~\eqref{eq:CBO_micro_with_memory_Y}, \revised{where}
\begin{align} \label{eq:S_beta}
	S^{\beta,\theta}(x,y) = \frac{1}{2}\big(1+\theta+\tanh\!\big(\beta\left(\CE(y)-\CE(x)\right)\!\big)\big)
\end{align}
\revised{is chosen throughout this article,}
which is an approximation to the Heaviside function $H(x,y) = \mathbbm{1}_{\CE(x)<\CE(y)}$ as $\theta\rightarrow0$ and $\beta\rightarrow\infty$.
The variable $Y_{t}^i$ can therefore be regarded as the memory of the $i$th particle, i.e., as the location of the in-time best-seen position of $X^i$ up to time~$t$.
This can be understood most easily when discretizing \eqref{eq:CBO_micro_with_memory_Y} as
\begin{align*} 
	Y_{k+1}^i = Y_{k}^i + \Delta t\kappa \left(X_{k+1}^i-Y_{k}^i\right) S^{\beta,\theta}\!\left(X_{k+1}^i, Y_{k}^i\right) 
\end{align*}
and noting that with parameter choices~$\kappa=1/\Delta t$, $\theta=0$ and $\beta\gg1$ it holds $Y_{k+1}^i = X_{k+1}^i$ if $\CE(X_{k+1}^i) < \CE(Y_{k}^i)$ and $Y_{k+1}^i = Y_{k}^i$ else.
The third term in the first line of Equation~\eqref{eq:CBO_micro_with_memory_X} is the drift in the direction of the negative gradient of $\CE$, which is a local and instantaneous contribution.
The remaining two terms are noise terms, which are associated with the formerly described memory and gradient drifts.

A theoretical convergence analysis of CBO can be carried out either by directly investigating the microscopic system~\eqref{eq:CBO_micro_with_memory} or its numerical time-discretization, as promoted for instance \revised{in a simplified setting} in the works~\cite{ha2020convergenceHD,ha2021convergence}, or alternatively, as done for example in~\cite{carrillo2018analytical,carrillo2019consensus,fornasier2021consensus,fornasier2021convergence,fornasier2020consensus_sphere_convergence}, by analyzing the macroscopic behavior of the particle density through a mean-field limit associated with~\eqref{eq:CBO_micro_with_memory}.
Formally, such mean-field limit is given by the self-consistent nonlinear and nonlocal SDE
\begin{subequations} \label{eq:CBO_macro_with_memory}
\begin{align}
	&\begin{aligned} \label{eq:CBO_macro_with_memory_X}
	d\overbar{X}_{t} = \begin{aligned}[t] 
		 &\!-\lambda_1\!\left(\overbar{X}_{t}-y_{\alpha}(\rho_{Y,t})\right) dt 
		    -\lambda_2\!\left(\overbar{X}_{t}-\overbar{Y}_{t}\right) dt
		    -\lambda_3\nabla\CE(\overbar{X}_{t}) \,dt\\
		 &\!+\sigma_1 D\!\left(\overbar{X}_{t}-y_{\alpha}(\rho_{Y,t})\right) d B_{t}^{1}
		    +\sigma_2 D\!\left(\overbar{X}_{t}-\overbar{Y}_{t}\right) d B_{t}^{2}
		    +\sigma_3 D\!\left(\nabla\CE(\overbar{X}_{t})\right) d B_{t}^{3},
		\end{aligned}
	\end{aligned}\\
	&\mspace{1mu}d\overbar{Y}_{t} = \kappa \left(\overbar{X}_{t}-\overbar{Y}_{t}\right) S^{\beta,\theta}\!\left(\overbar{X}_{t}, \overbar{Y}_{t}\right) dt, \label{eq:CBO_macro_with_memory_Y}
\end{align}
\end{subequations}
which is complemented with initial datum $(\overbar{X}_{0},\overbar{Y}_{0})\sim\rho_0$, and where $\rho_t = \rho(t) = \Law\left(\overbar{X}_{t},\overbar{Y}_{t}\right)$ with marginal law $\rho_{Y,t}$ of $\overbar{Y}_t$ given by $\rho_{Y,t} = \rho_{Y}(t, \,\cdot\,) = \int d\rho_t(\,\cdot\,,y)$.
The measure~$\rho\in\CC([0,T],\CP(\bbR^d\times\bbR^d))$ in particular weakly satisfies the Fokker-Planck equation
\begin{equation} \label{eq:CBO_with_memory_weak}
\begin{split}
	\partial_t\rho_t\!
	&= \divergence_x \!\big( \big( \lambda_1\!\left(x - \conspoint{\rho_{Y,t}}\right) + \lambda_2\!\left(x-y\right) + \lambda_3\nabla\CE(x) \big)\rho_t \big) 
	+ \divergence_y \!\big(\big(\kappa(y-x)S^{\beta,\theta}(x,y)\big)\rho_t\big) \\
	&\quad\, + \frac{1}{2}\sum_{k=1}^d\partial^2_{x_kx_k}\left(\left(\sigma_1^2D\!\left(x-\conspoint{\rho_{Y,t}}\right)_{kk}^2 + \sigma_2^2D\!\left(x-y\right)_{kk}^2 + \sigma_3^2D\!\left(\nabla\CE(x)\right)_{kk}^2\right)\rho_t\right),
\end{split}	
\end{equation}
see Definition~\ref{def:fokker_planck_weak_sense}.
Working with the partial differential equation (PDE)~\eqref{eq:CBO_with_memory_weak} instead of the interacting particle system~\eqref{eq:CBO_micro_with_memory} typically permits to employ more powerful technical tools, which result in stronger and deterministic statements about the long-time behavior of the average agent density~$\rho$.
This analysis approach is rigorously justified by the mean-field approximation, i.e., the fact that the empirical particle measure~$\empmeasure{t}:=\revised{\frac{1}{N}}\sum_{i=1}^N\delta_{(X_{t}^i,Y_{t}^i)}$ converges in some sense to the mean-field law~$\rho_t$ as the number of particles~$N$ tends to infinity.
For the original CBO dynamics, a qualitative result about convergence in distribution is provided in~\cite{huang2021MFLCBO}, which is based on a tightness argument in the path space.
More precisely, the authors of that work show that the sequence~$\{\empmeasurenoarg\}_{N\geq2}$ of $\CC([0,T],\CP(\bbR^d))$-valued random variables is tight, which permits to employ Prokhorov's theorem to obtain, up to a subsequence, some limiting measure, which turns out to be deterministic and at the same time satisfy the associated mean-field PDE.
A more desirable quantitative approximation result, on the other hand, can be established by proving propagation of chaos, i.e., by establishing for instance
\begin{align*}
	\max_{i=1,\dots,N} \sup_{t\in[0,T]} \bbE\N{(X_{t}^i,Y_{t}^i)-(\overbar{X}_{t}^i,\overbar{Y}_{t}^i)}_2^2 \leq CN^{-1} \qquad \text{ as } N\rightarrow\infty,
\end{align*}
where $(\overbar{X}_{t}^i,\overbar{Y}_{t}^i)$ denote $N$ i.i.d.\@ copies of the mean-field dynamcis~\eqref{eq:CBO_macro_with_memory}.
For the original variant of \revised{unconstrained} CBO this was first done in \cite[Section~3.3]{fornasier2021consensus}.
To keep the focus of this work on the long-time behavior of the CBO variant~\eqref{eq:CBO_with_memory_weak}, a rigorous analysis of the mean-field approximation is left for future considerations.

Before summarizing the contributions of the present paper, let us put our work into context by providing a comprehensive literature overview about the history, developments and achievements of CBO.

\paragraph{Versatility and Flexibility of CBO\,---\,A Literature Overview.}
Since its introduction in the work~\cite{pinnau2017consensus}, CBO has gained a significant amount of attention from various research groups.
This has led to a vast variety of different developments, of both theoretical and applied nature, as well as what concerns the mathematical modeling and numerical analysis of the method.
\revised{By interpreting CBO as a stochastic relaxation of gradient descent, the recent work~\cite{riedl2023gradient} even establishes a connection between the worlds of derivative-free and gradient-based optimization.}

A first rigorous but local convergence proof of the mean-field limit of CBO to global minimizers is provided \revised{for the cases of isotropic and anisotropic diffusion} in \cite{carrillo2018analytical,carrillo2019consensus}, \revised{respectively.}
By analyzing the time-evolution of the variance of the law of the mean-field dynamics~$\rho_t$ and proving its exponential decay towards zero, the authors first establish consensus formation at some stationary point before they ensure that this consensus is actually close to the global minimizer.
A similarly flavored approach is pursued in~\cite{ha2020convergenceHD,ha2021convergence}, however, directly for the fully in-time discrete microscopic system \revised{and in the simplified setting, where the same Brownian motion is used for all agents, which limits the exploration capabilities of the method.}
In contrast, in the recent works~\cite{fornasier2021consensus,fornasier2021convergence}, the authors, \revised{again for the isotropic and anisotropic CBO variant, respectively,} investigate the time-evolution of the Wasserstein-$2$ distance between the law~$\rho_t$ and a Dirac delta at the global minimizer. 
This is also the strategy which we pursue in this paper.
By proving the exponential decay of~$W_2(\rho_t,\delta_{\globmin})$ to zero, consensus at the desired location follows immediately.
Moreover, by providing a probabilistic quantitative result about the mean-field approximation, the authors give a first, and so far unique, holistic global convergence proof for the \revised{implementable, i.e., discretized} numerical CBO algorithm \revised{in the unconstrained case.} 
The results about the mean-field approximation of the latter papers were partially inspired by the series of works~\cite{fornasier2020consensus_hypersurface_wellposedness,fornasier2020consensus_sphere_convergence,fornasier2021anisotropic}, in which the authors constrain the particle dynamics of CBO to compact hypersurfaces and prove local convergence of the numerical scheme to minimizers by adapting the technique of~\cite{carrillo2018analytical,carrillo2019consensus}.
This ensures a beneficial compactness of the stochastic processes, which simplifies the convergence of the interacting particle dynamics to the mean-field dynamics.
\revised{In the unconstrained case,} such intrinsic compactness is replaced by the fact that the dynamics are bounded with high probability, which is sufficient to establish convergence in probability.
Further related works about CBO for optimizations with constraints include the papers~\cite{kim2020stochastic,ha2021emergent}, where a problem on the Stiefel manifold is approached, and the works~\cite{carrillo2021consensus,borghi2021constrained}, where the constrained optimization is recast into a penalized problem.
The philosophy of using an interacting swarm of particles to approach various relevant problems in science and engineering has promoted several variations of the original CBO algorithm for minimization.
Amongst them are methods based on consensus dynamics to tackle multi-objective optimization problems~\revised{\cite{borghi2022consensus,borghi2022adaptive,klamroth2022consensus}}, saddle point problems~\cite{huang2022consensus}, the search for several minimizers simultaneously~\cite{bungert2022polarized} or the sampling from certain distributions~\cite{carrillo2022consensus}.

In the same vein and also in the spirit of this work, the original CBO method itself has undergone several modifications allowing for a more complex dynamics.
This includes the use of particles with memory~\cite{totzeck2020consensus,grassi2021mean}, the integration of momentum~\cite{chen2020consensus}, the usage of jump-diffusion processes~\cite{kalise2022consensus} and the exploitation of on-the-fly extracted higher-order differential information through inferred gradients based on point evaluations of the objective function~\cite{schillings2022Ensemble}.
It moreover turned out that the renowned particle swarm optimization method (PSO)~\cite{kennedy1995particle} can be formulated and regarded as a second-order generalization of CBO~\cite{grassi2020particle,cipriani2021zero}.
This insight has enabled to adapt the for CBO developed analysis techniques to rigorously prove the convergence of PSO~\cite{qiu2022PSOconvergence}.

In the collection of formerly referenced works \revised{and beyond,} CBO has demonstrated to be a valuable method for a wide scope of applications reaching from
the phase retrieval or robust subspace detection problem in signal processing~\cite{fornasier2020consensus_sphere_convergence,fornasier2021anisotropic}, 
over the training of neural networks for image classification in machine learning~\cite{carrillo2019consensus,fornasier2021convergence} \revised{as well as in the setting of clustered federated learning~\cite{carrillo2023fedcbo},}
to asset allocation in finance~\cite{bae2022constrained}.
It has been furthermore employed to approximate low-frequency functions in the presence of high-frequency noise and to the task of solving PDEs with low-regularity solutions~\cite{chen2020consensus}.

\paragraph{Contributions.}
In view of the various developments and the wide scope of applications, a theoretical understanding of the long-time behavior of the in practical applications employed CBO methods is of paramount interest.
In this work we analyze a variant of CBO which incorporates memory effects as well as gradient information from a theoretical and numerical perspective.
As demonstrated concisely in Figure~\ref{fig:benefits_memory_gradient} and more comprehensively in Section~\ref{sec:numerics}, the herein investigated dynamics, which is more involved than standard CBO, proves to be beneficial in applications in machine learning and compressed sensing.
\begin{figure}[htp!]
	\centering
	\begin{subfigure}[b]{0.46\textwidth}
        \centering
        \includegraphics[trim=260 220 264 238,clip,height=0.2\textheight]{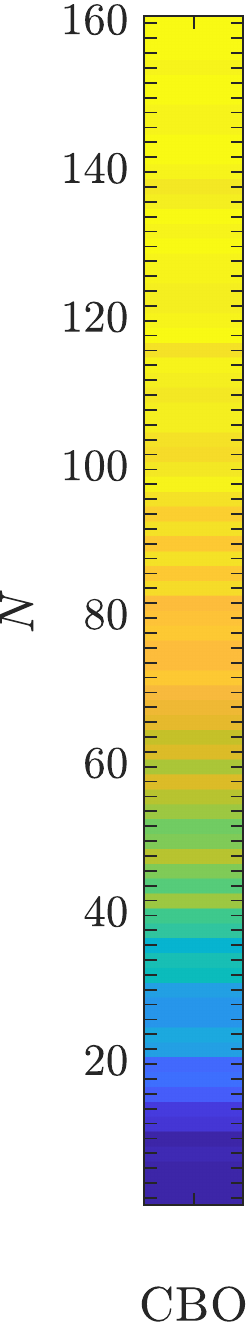}%
        \vspace{0.06cm}
        \includegraphics[trim=83 220 65 238,clip,height=0.2\textheight]{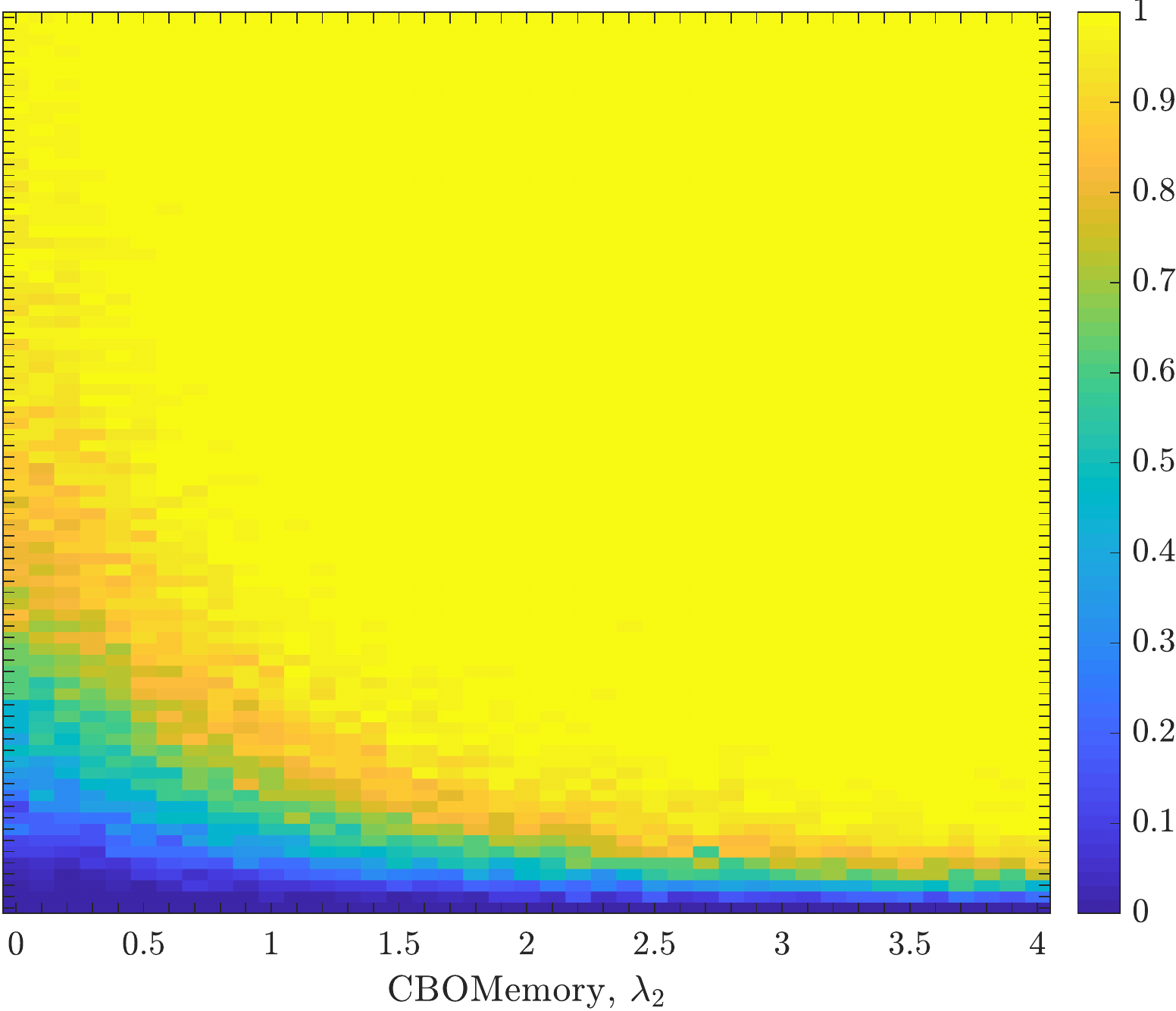}
        \caption{Memory effects and an additional drift towards the historical best position of each individual particle improve the success probability of CBO.}
        \label{fig:benefits_memory}
    \end{subfigure}~\hspace{1em}~
	\begin{subfigure}[b]{0.46\textwidth}
        \centering
        \includegraphics[trim=260 220 264 238,clip,height=0.2\textheight]{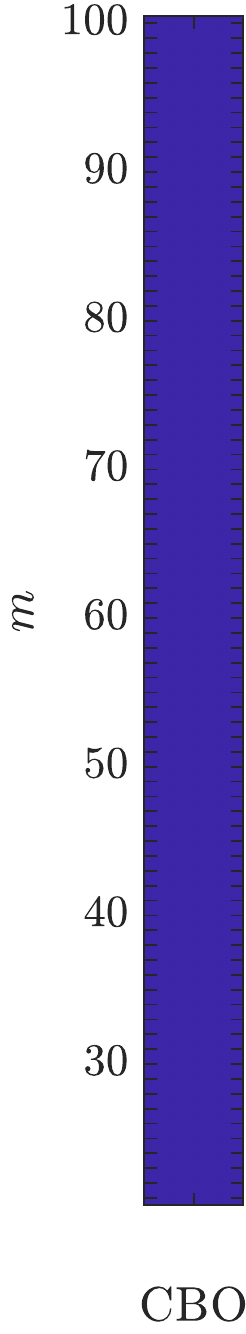}%
        \vspace{0.06cm}
        \includegraphics[trim=83 220 65 238,clip,height=0.2\textheight]{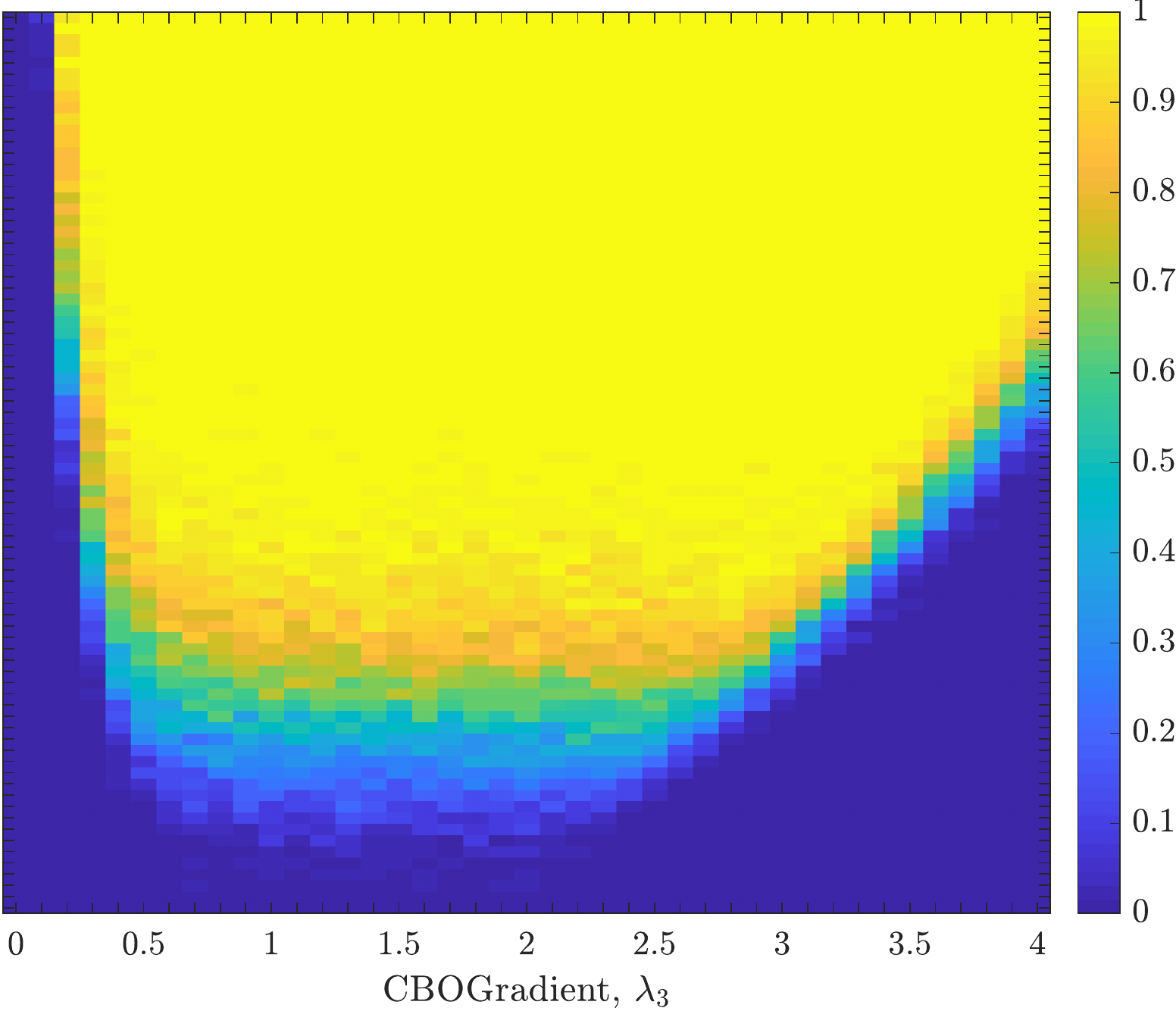}
        \caption{Gradient information and a drift in the direction of the negative gradient can be indispensable in certain applications such as compressed sensing.}
        \label{fig:benefits_gradient}
    \end{subfigure}
	\caption{\footnotesize A demonstration of the benefits of memory effects and gradient information in CBO methods.
	In both settings \textbf{(a)} and \textbf{(b)} the depicted success probabilities are averaged over $100$ runs of CBO and the implemented scheme is given by a Euler-Maruyama discretization of Equation~\eqref{eq:CBO_micro_with_memory} with time horizon~$T=20$, discrete time step size~$\Delta t=0.01$, $\alpha=100$, $\beta=\infty$, $\theta=0$, $\kappa=1/\Delta t$, $\lambda_1=1$ and $\sigma_1=\sqrt{1.6}$.
	In \textbf{(a)} we plot the success probability of CBO without (left separate column) and with (right phase diagram) memory effects for different values of the parameter~$\lambda_2$, i.e., for different strengths of the memory drift, when optimizing the Rastrigin function $\CE(x) = \sum_{k=1}^d x_k^2 + \frac{5}{2} \big(1-\cos(2\pi x_k)\big)$ in dimension~$d=4$.
	As remaining parameters we choose $\sigma_2=\lambda_1\sigma_1$ and $\lambda_3=\sigma_3=0$, i.e., no gradient information is involved.
	We observe that an increasing amount of memory drift improves the success probability significantly, even in the case where, theoretically, there are no convergence guarantees anymore, see Theorem~\ref{thm:global_convergence_main} and Corollary~\ref{cor:global_convergence_main}.
	Section~\ref{sec:numerics:Rastrigin} provides further details.
	In \textbf{(b)} we depict the success probability of CBO without (left separate column) and with (right phase diagram) gradient information for different values of the parameter~$\lambda_3$, i.e., for different strengths of the gradient drift, when solving a compressed sensing problem in dimension~$d=200$ with sparsity~$s=8$.
	On the vertical axis we depict the number of measurements~$m$, from which we try to recover the sparse signal by solving the associated $\ell_1$-regularized problem (LASSO).
	As remaining parameters we use merely $N=10$ particles, choose $\sigma_3=0$ and $\lambda_2=\sigma_2=0$, i.e., no memory drift is involved.
	We observe that gradient information is required to be able to identify the correct sparse solution and standard CBO would fail in such task.
	Section~\ref{sec:numerics:CS} provides more details.}
	\label{fig:benefits_memory_gradient}
\end{figure}
Despite this additional complexity, by employing the analysis technique devised in~\cite{fornasier2021consensus,fornasier2021convergence}, we are able to provide rigorous mean-field-type convergence guarantees to the global minimizer, which describe the behavior of the method in the large-particle limit and allow to draw conclusions about the typically observed performance in the practicable regime.
Our results for CBO with memory effects and gradient information hold for a vast class of objective functions under minimal assumptions on the initialization of the method.
Moreover, the proof reveals how to leverage further, in other applications advantageous, forces in the dynamics while still being amenable to theory and allowing for provable global convergence.

\subsection{Organization} \label{subsec:organization}
In Section~\ref{sec:main_result}, after providing details about the existence of solutions to the macroscopic SDE~\eqref{eq:CBO_macro_with_memory} and the associated PDE~\eqref{eq:CBO_with_memory_weak}, we present and discuss our main theoretical contribution.
It is about the convergence of CBO with memory effects and gradient information, as given in Equation~\eqref{eq:CBO_micro_with_memory}, to the global minimizer of the objective function in mean-field law, see~\cite[Definition~1]{fornasier2021consensus}.
More precisely, we show that the mean-field dynamics~\eqref{eq:CBO_macro_with_memory} and~\eqref{eq:CBO_with_memory_weak} converge with exponential rate to the global minimizer.
Section~\ref{sec:proof_main_theorem} contains the proof details of this result.
In Section~\ref{sec:numerics} we numerically demonstrate the benefits of the additional memory effects and gradient information of the previously analyzed CBO variant.
We in particular present applications of CBO in machine learning and compressed sensing, before we conclude the paper in Section~\ref{sec:conclusion}.

For the sake of reproducible research, in the GitHub repository \url{https://github.com/KonstantinRiedl/CBOGlobalConvergenceAnalysis} we provide the Matlab code implementing the CBO algorithm with memory effects and gradient information analyzed in this work.

\subsection{Notation} \label{subsec:notation}
\revised{Given a set $A\subset\bbR^d$, we write $(A)^c$ to denote its complement, i.e., $(A)^c:=\{z\in\bbR^d:z\not\in A\}$.}
For $\ell_\infty$ balls in $\bbR^d$ with center~$z$ and radius~$r$ we write $B^\infty_{r}(z)$.
The space of continuous functions~$f:X\rightarrow Y$ is denoted by $\CC(X,Y)$, with $X\subset\bbR^n$ and a suitable topological space $Y$.
For $X\subset\bbR^n$ open and for $Y=\bbR^m$ the function space~$\CC^k_{c}(X,Y)$ contains functions~$f\in\CC(X,Y)$ that are $k$-times continuously differentiable and have compact support.
$Y$ is omitted in the case of real-valued functions.
The operator $\nabla$ denotes the standard gradient of a function on~$\bbR^d$.

In this paper we mostly study laws of stochastic processes, $\rho\in\CC([0,T],\CP(\bbR^d))$, and we refer to a snapshot of such law at time~$t$ by writing~$\rho_t\in\CP(\bbR^d)$.
Here, $\CP(\bbR^d)$ denotes the set of all Borel probability measures~$\indivmeasure$ over $\bbR^d$.
In $\CP_p(\bbR^d)$ we moreover collect measures~$\indivmeasure \in \CP(\bbR^d)$ with finite $p$-th moment.
For any $1\leq p<\infty$, $W_p$ denotes the \mbox{Wasserstein-$p$} distance between two Borel probability measures~$\indivmeasure_1,\indivmeasure_2\in\CP_p(\bbR^d)$, see, e.g., \cite{savare2008gradientflows}.
$\bbE(\indivmeasure)$ denotes the expectation of a probability measure $\indivmeasure$.

\section{Global Convergence in Mean-Field Law} \label{sec:main_result}

In the first part of this section we provide an existence result about solutions of the nonlinear macroscopic SDE~\eqref{eq:CBO_macro_with_memory}, respectively, the associated Fokker-Planck equation~\eqref{eq:CBO_with_memory_weak}.
Thereafter we specify the class of studied objective functions and present the main theoretical result about the convergence of the dynamics~\eqref{eq:CBO_macro_with_memory} and~\eqref{eq:CBO_with_memory_weak} to the global minimizer.

Throughout this work we consider the\,---\,in typical applications beneficial\,---\,case of CBO with anisotropic diffusion, i.e.,~$D\!\left(\,\cdot\,\right) = \diag\left(\,\cdot\,\right)$ in Equations~\eqref{eq:CBO_micro_with_memory}, \eqref{eq:CBO_macro_with_memory} and \eqref{eq:CBO_with_memory_weak}, and also Equation~\eqref{eq:weak_solution_identity} below.
However, up to minor modifications, analogous results can be obtained for isotropic diffusion.

\subsection{Definition and Existence of Weak Solutions} \label{subsec:well_posedness}

Let us begin by rigorously defining weak solutions of the Fokker-Planck equation~\eqref{eq:CBO_with_memory_weak}.

\begin{definition} \label{def:fokker_planck_weak_sense}
	Let $\rho_0 \in \CP(\bbR^d\times\bbR^d)$, $T > 0$.
	We say $\rho\in\CC([0,T],\CP(\bbR^d\times\bbR^d))$ satisfies the Fokker-Planck equation~\eqref{eq:CBO_with_memory_weak} with initial condition $\rho_0$ in the weak sense in the time interval $[0,T]$, if we have for all $\phi \in \CC_c^2(\bbR^d\times\bbR^d)$ and all $t \in (0,T)$
	\begin{equation} \label{eq:weak_solution_identity}
	\begin{aligned}
		&\frac{d}{dt}\!\iint \phi(x,y) \,d\rho_t(x,y) =
		-\! \iint \kappa S^{\beta,\theta}(x,y) \left\langle y-x,\nabla_y\phi(x,y) \right\rangle d\rho_t(x,y)\\
		&\quad -\! \iint \lambda_1\!\left\langle x \!-\! \conspoint{\rho_{Y,t}}, \!\nabla_x \phi(x,y) \right\rangle \!+\! \lambda_2\!\left\langle x\!-\!y,\!\nabla_x\phi(x,y) \right\rangle \!+\! \lambda_3\!\left\langle \nabla\CE(x), \!\nabla_x \phi(x,y) \right\rangle d\rho_t(x,y)\\
		&\quad + \frac{1}{2} \iint \!\sum_{k=1}^d \!\left(\sigma_1^2D\!\left(x\!-\!\conspoint{\rho_{Y,t}}\right)_{kk}^2 \!\!+\! \sigma_2^2D\!\left(x\!-\!y\right)_{kk}^2 \!\!+\! \sigma_3^2D\!\left(\nabla\CE(x)\right)_{kk}^2 \!\right) \!\partial^2_{x_kx_k} \phi(x,y) \,d\rho_t(x,y)
	\end{aligned}
	\end{equation}
	and $\lim_{t\rightarrow 0}\rho_t = \rho_0$ \revised{(in the sense of weak convergence of measures).}
\end{definition}

For solutions of the mean-field dynamics~\eqref{eq:CBO_macro_with_memory} and~\eqref{eq:CBO_with_memory_weak} we have the following existence result.

\begin{theorem} \label{thm:well-posedness_FP}
	Let $T > 0$, $\rho_0 \in \CP_4(\bbR^d\times\bbR^d)$. Let $\CE : \bbR^d\rightarrow \bbR$ with $\minobj > -\infty$ satisfy for some constants $C_1,C_2 > 0$ the conditions
	\begin{align}
		\SN{\CE(x)-\CE(x')}
		&\leq C_1\left(\N{x}_2 + \Nnormal{x'}_2\right)\Nnormal{x-x'}_2,
		\quad \text{for all } x,x' \in \bbR^d, \label{asm:local_lipschitz}\\
		\CE(x) - \minobj
		&\leq C_2 \left(1 + \Nnormal{x}_2^2\right),
		\quad \text{for all } x \in \bbR^d, \label{asm:quadratic_growth_1}
	\end{align}
	and either $\sup_{x \in \bbR^d}\CE(x) < \infty$ or
	\begin{align}
		\CE(x) - \minobj \geq C_3\N{x}_2^2,\quad \text{for all } \N{x}_2 \geq C_4 \label{asm:quadratic_growth_2}
	\end{align}
	for some~$C_3,C_4 > 0$.
	Furthermore, in the case of an active gradient drift in the CBO dynamcis~\eqref{eq:CBO_macro_with_memory}, i.e., if $\lambda_3\not=0$, let $\CE\in\CC^1(\bbR^d)$ and obey additionally
	\begin{align}
		\N{\nabla\CE(x)-\nabla\CE(x')}_2 &\leq \widetilde{L}_{\nabla\CE} \Nnormal{x-x'}_2, \quad \text{for all } x,x' \in\bbR^d \label{asm:Lsmooth}
	\end{align}
	for some~$\widetilde{L}_{\nabla\CE}>0$.
	Then, \revised{if $(\overbar X_0, \overbar Y_0)$ is distributed according to $\rho_0$,} there exists a nonlinear process $(\overbar X, \overbar Y) \in \CC([0,T],\bbR^d\times\bbR^d)$ satisfying \eqref{eq:CBO_macro_with_memory}
	with associated law $\rho = \Law\big((\overbar X, \overbar Y)\big)$ having regularity $\rho \in \CC([0,T], \CP_4(\bbR^d\times\bbR^d))$ and being a weak solution to the Fokker-Planck equation~\eqref{eq:CBO_with_memory_weak} \revised{with $\rho(0)=\rho_0$.}
\end{theorem}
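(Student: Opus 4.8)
The plan is to follow the by-now-standard Leray--Schauder fixed point strategy for McKean--Vlasov equations that was used for CBO in \cite{carrillo2018analytical,fornasier2021consensus,carrillo2019consensus,fornasier2021convergence}, adapted to the enlarged state space $(\overbar X,\overbar Y)$ and to the additional gradient drift and diffusion. \textbf{Decoupling step.} Fix a reference flow $u\in\CC([0,T],\CP_4(\bbR^d\times\bbR^d))$ and consider the ``frozen'' SDE obtained from \eqref{eq:CBO_macro_with_memory} by replacing $\conspoint{\rho_{Y,t}}$ with $\conspoint{u_{Y,t}}$. I would first record the auxiliary estimates on the consensus map: a boundedness bound $\N{\conspoint{\mu}}_2\lesssim 1+(\int\N{y}_2^2\,d\mu(y))^{1/2}$ with constant depending on $\alpha$ and $\CE$, and a local stability bound $\N{\conspoint{\mu}-\conspoint{\nu}}_2\leq C\,W_2(\mu,\nu)$ valid for $\mu,\nu$ in a bounded subset of $\CP_2(\bbR^d)$. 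Both follow from \eqref{asm:local_lipschitz}, \eqref{asm:quadratic_growth_1} and the dichotomy ``$\CE$ bounded from above or \eqref{asm:quadratic_growth_2}'': the latter alternative is exactly what guarantees a uniform lower bound on the normalizing weight $\N{\omegaa}_{L_1(\mu)}$ on sets of measures with uniformly bounded second moment, since then mass cannot escape to the region where $\CE$ is large. Given these, the coefficients of the frozen system are locally Lipschitz with linear growth in $(x,y)$: the $Y$-drift $\kappa(x-y)S^{\beta,\theta}(x,y)$ because $S^{\beta,\theta}$ is bounded and, by \eqref{asm:local_lipschitz}, locally Lipschitz; the gradient drift and the associated diffusion because \eqref{asm:Lsmooth} makes $\nabla\CE$ globally Lipschitz, hence of linear growth; and the choice $D=\diag$ preserves Lipschitz continuity and linear growth of the remaining diffusion coefficients. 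Standard SDE theory then yields a unique strong solution $(\overbar X,\overbar Y)\in\CC([0,T],\bbR^d\times\bbR^d)$, and It\^o's formula together with the Burkholder--Davis--Gundy inequality and Gr\"onwall's lemma give the moment bound $\bbE\sup_{t\leq T}\N{(\overbar X_t,\overbar Y_t)}_2^4\leq C(1+\bbE\N{(\overbar X_0,\overbar Y_0)}_2^4)$ and a time-H\"older estimate $\bbE\N{(\overbar X_t,\overbar Y_t)-(\overbar X_s,\overbar Y_s)}_2^2\leq C|t-s|$, with $C$ depending on $u$ only through the bound on its second moments.

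\textbf{Fixed point step.} Define the map $\mathcal{T}\colon u\mapsto\Law((\overbar X,\overbar Y))$ on a closed, convex, bounded subset of $\CC([0,T],\CP_2(\bbR^d\times\bbR^d))$, namely the flows starting at $\rho_0$ whose second moment stays below a constant $R$ chosen large enough that the moment estimate above forces $\mathcal{T}$ to map this set into itself. Continuity of $\mathcal{T}$ in the sup-in-time $W_2$ metric follows from the local stability bound for $\conspoint{\cdot}$ combined with a Gr\"onwall comparison of the two frozen SDEs. Compactness of $\mathcal{T}$ follows from the uniform fourth-moment bound, which yields uniform integrability and hence upgrades narrow convergence of the time slices to $W_2$-convergence, together with the time-H\"older estimate, which yields equicontinuity in time, via an Arzel\`a--Ascoli argument combined with Prokhorov's theorem on $\CP_2(\bbR^d\times\bbR^d)$. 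The Leray--Schauder fixed point theorem then produces $\rho=\mathcal{T}(\rho)$, i.e.\ a $\rho\in\CC([0,T],\CP_2)$ which is the law of a solution of the genuinely nonlinear system \eqref{eq:CBO_macro_with_memory}; re-running the moment estimate with $u=\rho$ upgrades the regularity to $\rho\in\CC([0,T],\CP_4(\bbR^d\times\bbR^d))$.

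\textbf{Fokker--Planck step.} Applying It\^o's formula to $\phi(\overbar X_t,\overbar Y_t)$ for arbitrary $\phi\in\CC_c^2(\bbR^d\times\bbR^d)$, taking expectations, and using that the stochastic integrals have vanishing mean (which holds thanks to the moment bounds and the boundedness of $\nabla\phi$ and $\nabla^2\phi$) yields precisely the weak identity \eqref{eq:weak_solution_identity} of Definition~\ref{def:fokker_planck_weak_sense}, while $\lim_{t\to0}\rho_t=\rho_0$ holds by construction. I expect the \emph{main obstacle} to be the decoupling step's treatment of the consensus point: proving the boundedness and local Lipschitz stability of $\mu\mapsto\conspoint{\mu}$ uniformly over the relevant set of measures is where the growth conditions on $\CE$ and the bounded/unbounded dichotomy genuinely enter, and the fourth-moment bookkeeping must be carried through carefully so that the compactness argument actually delivers $W_2$-convergence rather than mere narrow convergence, which is what is needed for the continuity of $\mathcal{T}$ and the closedness of the invariant set.
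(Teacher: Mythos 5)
Your overall architecture (freeze the nonlinearity, solve the resulting linear SDE, close with a fixed point theorem, then derive the weak Fokker--Planck identity via It\^o) is the same as the paper's, but you place the fixed point on the space of measure flows $\CC([0,T],\CP_2(\bbR^d\times\bbR^d))$, whereas the paper freezes only the consensus-point trajectory and runs Leray--Schauder on the Banach space $\CC([0,T],\bbR^d)$ via the map $u\mapsto\CT u=\conspoint{\widetilde\rho_Y}$. The paper's choice is not cosmetic: it keeps the compactness argument at the level of $\bbR^d$-valued curves (H\"older continuity in time plus Arzel\`a--Ascoli), avoids having to invoke a Schauder--Tychonoff-type theorem on a convex subset of a locally convex space of measures (which your route would need, since $\CC([0,T],\CP_2)$ is not a Banach space), and --- crucially --- replaces your invariant-ball step by an a priori estimate along the homotopy $u=\vartheta\CT u$, $\vartheta\in[0,1]$.

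That invariant-ball step is where your argument has a genuine gap. You claim one can choose $R$ "large enough that the moment estimate forces $\mathcal{T}$ to map this set into itself." Writing out the Gr\"onwall bound, the second moment of $\mathcal{T}(u)$ is controlled by something of the form $\big(M_2(\rho_0)+c\,(1+b_1+b_2R)\,T\big)e^{cT}$ when $\sup_t M_2(u_t)\leq R$ (using the moment-independent consensus bound $\N{\conspoint{\mu}}_2^2\leq b_1+b_2M_2(\mu)$ that the growth dichotomy provides). This is affine in $R$ with slope $\sim b_2Te^{cT}$, so it lies below $R$ for large $R$ only if $b_2Te^{cT}<1$; the diffusion terms prevent the drift from being globally contractive, so for general $T$ and parameters no invariant ball exists. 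You would have to either patch over short time intervals, use a weighted-in-time norm, or --- as the paper does --- derive the uniform bound only for solutions of $u=\vartheta\CT u$, where the self-consistency lets the Gr\"onwall inequality close in the single quantity $\bbE\Nnormal{\widetilde X_t}_2^2$ and yields a finite (if exponentially large in $T$) bound for every $T$. Two smaller repairs: the stability estimate $\Nnormal{\conspoint{\mu}-\conspoint{\nu}}_2\lesssim W_2(\mu,\nu)$ from \cite[Lemma~3.2]{carrillo2018analytical} holds with constants depending on the \emph{fourth} moments of $\mu,\nu$ (not merely on a $\CP_2$-bound, because the locally Lipschitz estimate \eqref{asm:local_lipschitz} has a linearly growing Lipschitz constant); since you do propagate fourth moments this is fixable, but the statement as written is imprecise. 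With these corrections your route would go through; as written, the self-map claim is unjustified.
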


\revised{Assumption~\eqref{asm:local_lipschitz} requires that $\CE$ is locally Lipschitz-continuous with the Lipschitz constant being allowed to have linear growth.
This entails in particular that the objective has at most quadratic growth at infinity as formulated explicitly in Assumption~\eqref{asm:quadratic_growth_1},
which can be seen when choosing $x'=\globmin$ and $C_2=2C_1\max\{1,\N{\globmin}_2^2\}$ in \eqref{asm:local_lipschitz}. 
Assumption~\eqref{asm:quadratic_growth_2}, on the other hand, assumes that $\CE$ also has at least quadratic growth in the farfield, i.e., overall it grows quadratically far away from~$\globmin$.
Alternatively, $\CE$ may be bounded from above.
Since the objective $\CE$ can be usually modified for the purpose of analysis outside a sufficiently large region, these growth conditions are not really restrictive.
In case of an additional gradient drift term in the dynamics, i.e., $\lambda_3\not=0$, the objective naturally needs to be continuously differentiable.
Furthermore, Assumption~\eqref{asm:Lsmooth} imposes $\CE$ to be $\widetilde{L}_{\nabla\CE}$-smooth, i.e., having an $\widetilde{L}_{\nabla\CE}$-Lipschitz continuous gradient.}

\revised{\begin{remark} \label{rem:test_functions_redefine}
	The regularity $\rho \in \CC([0,T], \CP_4(\bbR^d\times\bbR^d))$ obtained in Theorem~\ref{thm:well-posedness_FP} above is an immediate consequence of the regularity of the initial condition $\rho_0 \in \CP_4(\bbR^d\times\bbR^d)$.
	It allows to extend the test function space $\CC^{\infty}_{c}(\bbR^d\times\bbR^d)$ in Definition~\ref{def:fokker_planck_weak_sense} to the larger space
	\begin{equation} \label{eq:function_space}
		\begin{split}
			\CC^2_{*}(\bbR^d\times\bbR^d)
			:=\big\{\phi\in\CC^2(\bbR^d\times\bbR^d):\, &\abs{\partial_{x_k}\phi(x,y)} \leq C_\phi(1\!+\!\Nnormal{x}_2\!+\!\Nnormal{y}_2) \text{ for some } C_\phi\!>\!0\\
			&\,\text{and } \sup_{(x,y)\in \bbR^d\times\bbR^d}\max_{k=1,\dots,d}|\partial^2_{x_kx_k} \phi(x,y)| < \infty\big\},
		\end{split}
		\end{equation}
		as can be seen from the proof of Theorem~\ref{thm:well-posedness_FP}, which we sketch in what follows.
\end{remark}}

\begin{proof}[Proof sketch of Theorem~\ref{thm:well-posedness_FP}]
	The proof is based on the Leray-Schauder fixed point theorem and follows the steps taken in~\cite[Theorems~3.1, 3.2]{carrillo2018analytical}.

	\noindent\textbf{Step 1:} For a given function $u\in\mathcal{C}([0,T],\mathbb{R}^d)$ and an initial measure~$\rho_0\in\CP_4(\bbR^d)$, according to standard SDE theory~\cite[Chapters~6]{arnold1974stochasticdifferentialequations}, we can uniquely solve the auxiliary SDE
		\begin{subequations}
		\begin{align*}
			&\begin{aligned}
			d\widetilde{X}_{t} = \begin{aligned}[t] 
				 &\!-\lambda_1\big(\widetilde{X}_{t}-u_t\big)\, dt 
				    -\lambda_2\big(\widetilde{X}_{t}-\widetilde{Y}_{t}\big)\, dt
				    -\lambda_3\nabla\CE(\widetilde{X}_{t}) \,dt\\
				 &\!+\sigma_1 D\big(\widetilde{X}_{t}-u_t\big)\, d B_{t}^{1}
				    +\sigma_2 D\big(\widetilde{X}_{t}-\widetilde{Y}_{t}\big)\, d B_{t}^{2}
				    +\sigma_3 D\big(\nabla\CE(\widetilde{X}_{t})\big)\, d B_{t}^{3}
				\end{aligned}
			\end{aligned}\\
			&\mspace{1mu}d\widetilde{Y}_{t} = \kappa \big(\widetilde{X}_{t}-\widetilde{Y}_{t}\big)\, S^{\beta,\theta}\big(\widetilde{X}_{t}, \widetilde{Y}_{t}\big)\, dt
		\end{align*}
		\end{subequations}
		with $(\widetilde{X}_0,\widetilde{Y}_0)\sim\rho_0$.
		This is due to the fact that the coefficients of the drift and diffusion terms are locally Lipschitz continuous and have at most linear growth,
		which, in turn, is a consequence of the assumptions on $\CE$ as well as the smoothness of $S^{\beta,\theta}$ \revised{as defined in~\eqref{eq:S_beta}.}
		This induces $\widetilde\rho_t=\Law\big((\widetilde{X}_t,\widetilde{Y}_t)\big)$.
		Moreover, the assumed regularity of the initial distribution $\rho_0 \in \CP_4(\bbR^d\times\bbR^d)$ allows to obtain a fourth-order moment estimate of the form $\mathbb{E}\big[\Nnormal{\widetilde{X}_t}_2^4+\Nnormal{\widetilde{Y}_t}_2^4\big] \leq \big(1+2\mathbb{E}\big[\Nnormal{\widetilde{X}_0}_2^4+\Nnormal{\widetilde{Y}_0}_2^4\big]\big)e^{ct}$, see, e.g.\@~\cite[Chapter~7]{arnold1974stochasticdifferentialequations}.
		So, in particular, $\widetilde\rho\in\CC([0,T],\CP_4(\bbR^d\times\bbR^d))$.

	\noindent\textbf{Step 2:}
		\revised{For some test function $\phi\in\CC^2_{*}(\bbR^d\times\bbR^d)$ as defined in \eqref{eq:function_space},} by It\^o's formula, we derive
		\begin{equation*}
		\begin{split}
			&d\phi(\widetilde{X}_t,\widetilde{Y}_t)
			= \nabla_x\phi(\widetilde{X}_t,\widetilde{Y}_t) \cdot \left(-\lambda_1\big(\widetilde{X}_{t}-u_t\big) -\lambda_2\big(\widetilde{X}_{t}-\widetilde{Y}_{t}\big) -\lambda_3\nabla\CE(\widetilde{X}_{t})\right) dt \\
			&\quad \quad\, + \nabla_y\phi(\widetilde{X}_t,\widetilde{Y}_t) \cdot \left(\kappa \big(\widetilde{X}_{t}-\widetilde{Y}_{t}\big)\, S^{\beta,\theta}\big(\widetilde{X}_{t}, \widetilde{Y}_{t}\big)\right) dt \\
			&\quad \quad\, + \frac{1}{2} \sum_{k=1}^d \partial^2_{x_kx_k}\phi(\widetilde{X}_t,\widetilde{Y}_t)\left(\sigma_1^2 D\big(\widetilde{X}_{t}-u_t\big)_{kk}^2 \!+\! \sigma_2^2 D\big(\widetilde{X}_{t}-\widetilde{Y}_{t}\big)_{kk}^2 \!+\! \sigma_3^2 D\big(\nabla\CE(\widetilde{X}_{t})\big)_{kk}^2\right)dt \\
			&\quad \quad\, + \nabla_x\phi(\widetilde{X}_t,\widetilde{Y}_t) \cdot\left(\sigma_1 D\big(\widetilde{X}_{t}-u_t\big)\, d B_{t}^{1} + \sigma_2 D\big(\widetilde{X}_{t}-\widetilde{Y}_{t}\big)\, d B_{t}^{2} + \sigma_3 D\big(\nabla\CE(\widetilde{X}_{t})\big)\, d B_{t}^{3}\right)
		\end{split}
		\end{equation*}
		After taking the expectation, applying Fubini's theorem and observing that the stochastic integrals of the form~$\bbE\int_0^t\nabla_x\phi(\widetilde{X}_t,\widetilde{Y}_t)\cdot D(\,\cdot\,)\,dB_t$ vanish as a consequence of \cite[Theorem 3.2.1(iii)]{oksendal2013stochastic} due to the established regularity $\widetilde\rho\in\CC([0,T],\CP_4(\bbR^d\times\bbR^d))$ and $\phi\in\CC^2_{*}(\bbR^d\times\bbR^d)$, we obtain
		\begin{equation*}
		\begin{split}
			&\frac{d}{dt}\bbE\phi(\widetilde{X}_t,\widetilde{Y}_t)
			= -\bbE\nabla_x\phi(\widetilde{X}_t,\widetilde{Y}_t) \cdot \left(\lambda_1\big(\widetilde{X}_{t}-u_t\big) +\lambda_2\big(\widetilde{X}_{t}-\widetilde{Y}_{t}\big) +\lambda_3\nabla\CE(\widetilde{X}_{t})\right) \\
			&\quad\ + \bbE \nabla_y\phi(\widetilde{X}_t,\widetilde{Y}_t) \cdot \left(\kappa\big(\widetilde{X}_{t}-\widetilde{Y}_{t}\big)  S^{\beta,\theta}\big(\widetilde{X}_{t}, \widetilde{Y}_{t}\big)\right) \\
			&\quad\ + \frac{1}{2} \sum_{k=1}^d \partial^2_{x_kx_k}\phi(\widetilde{X}_t,\widetilde{Y}_t)\left(\sigma_1^2 D\big(\widetilde{X}_{t}-u_t\big)_{kk}^2 \!+\! \sigma_2^2 D\big(\widetilde{X}_{t}-\widetilde{Y}_{t}\big)_{kk}^2 \!+\! \sigma_3^2 D\big(\nabla\CE(\widetilde{X}_{t})\big)_{kk}^2\right)
		\end{split}
		\end{equation*}
		according to the fundamental theorem of calculus. This shows that $\widetilde\rho\in\CC([0,T],\CP_4(\bbR^d\times\bbR^d))$ satisfies the Fokker-Planck equation
		\begin{equation} \label{proof:auxiliary_Fokker-Planck}
		\begin{aligned}
			&\frac{d}{dt}\iint \phi(x,y) \,d\widetilde\rho_t(x,y) =
			- \iint \kappa S^{\beta,\theta}(x,y) \left\langle y-x,\nabla_y\phi(x,y) \right\rangle d\widetilde\rho_t(x,y)\\
			&\quad\ - \iint \lambda_1\left\langle x - u_t, \nabla_x \phi(x,y) \right\rangle + \lambda_2\left\langle x-y,\nabla_x\phi(x,y) \right\rangle + \lambda_3\left\langle \nabla\CE(x), \nabla_x \phi(x,y) \right\rangle d\widetilde\rho_t(x,y)\\
			&\quad\ + \frac{1}{2} \iint \sum_{k=1}^d \left(\sigma_1^2D\!\left(x-u_t\right)_{kk}^2 + \sigma_2^2D\!\left(x-y\right)_{kk}^2 + \sigma_3^2D\!\left(\nabla\CE(x)\right)_{kk}^2 \right) \partial^2_{x_kx_k} \phi(x,y) \,d\widetilde\rho_t(x,y)
		\end{aligned}
		\end{equation}
		
	\noindent The remainder is identical to the cited reference and is summarized below for completeness.

	\noindent\textbf{Step 3:} Setting $\CT u:=\conspoint{\widetilde\rho_Y}\in\mathcal{C}([0,T],\mathbb{R}^d)$ provides the self-mapping property of the map
		\begin{align*}
			\CT:\CC([0,T],\mathbb{R}^d)\rightarrow\CC([0,T],\bbR^d), \quad u\mapsto\mathcal{T}u=\conspoint{\widetilde\rho_Y},
		\end{align*}
		which is compact as a consequence of a stability estimate for the consensus point~\cite[Lemma~3.2]{carrillo2018analytical}.
		More precisely, as shown in the cited result, it holds $\Nnormal{\conspoint{\widetilde\rho_{Y,t}}-\conspoint{\widetilde\rho_{Y,s}}}_2 \lesssim W_2(\widetilde\rho_{Y,t},\widetilde\rho_{Y,s})$ for $\widetilde\rho_{Y,t},\widetilde\rho_{Y,s}\in\CP_4(\bbR^d)$.
		Together with the \mbox{H\"{o}lder-$1/2$} continuity of the Wasserstein-$2$ distance~$W_2(\widetilde\rho_{Y,t},\widetilde\rho_{Y,s})$, this ensures the claimed compactness of~$\CT$.

	\noindent\textbf{Step 4:} Then, for $u=\vartheta\CT u$ with $\vartheta\in[0,1]$, there exists $\rho\in\CC([0,T],\CP_4(\bbR^d\times\bbR^d))$ satisfying~\eqref{proof:auxiliary_Fokker-Planck} with marginal~$\rho_Y$ such that $u=\vartheta \conspoint{\rho_Y}$.
		For such $u$, a uniform bound can be obtained either thanks to the boundedness or the growth condition of~$\CE$ required in the statement.
		An application of the Leray-Schauder fixed point theorem concludes the proof by providing a solution to~\eqref{eq:CBO_macro_with_memory}.
\end{proof}

\subsection{Main Result} \label{subsec:main_results}

We now present the main theoretical result about global mean-field law convergence of CBO with memory effects and gradient information for objectives that satisfy the following conditions.
\begin{definition}[Assumptions] \label{def:assumptions}
	Throughout we are interested in functions $\CE \in \CC(\bbR^d)$, for which
	\begin{enumerate}[label=A\arabic*,labelsep=10pt,leftmargin=35pt]
		\item\label{asm:zero_global} there exists \revised{a unique} $\globmin\in\bbR^d$ such that $\CE(\globmin)=\inf_{x\in\bbR^d} \CE(x)=:\underbar{\CE}$, and
		\item\label{asm:icp} there exist $\CE_{\infty},R_0,\eta > 0$, and  $\nu \in (0,\infty)$ such that
		\begin{align}
			\label{eq:asm_icp_vstar}
			\N{x-\globmin}_\infty &\leq \frac{1}{\eta}(\CE(x)-\minobj)^{\nu} \quad \text{for all } x \in B^\infty_{R_0}(\globmin),\\
			\label{eq:asm_icp_farfield}
			\CE_{\infty} &< \CE(x)-\minobj\quad \text{for all } x \in \big(B^\infty_{R_0}(\globmin)\big)^c.
		\end{align}
	\end{enumerate}
	Furthermore, for the case of an additional gradient drift component, i.e., if $\lambda_3\not=0$, we additionally require that $\CE\in\CC^1(\bbR^d)$ and that
	\begin{enumerate}[label=A\arabic*,labelsep=10pt,leftmargin=35pt]
		\setcounter{enumi}{2}
		\item\label{asm:Lipschitz_gradient} there exist $C_{\nabla\CE} > 0$ such that
			\begin{align}
				\N{\nabla\CE(x)}_2 \leq C_{\nabla\CE}\N{x-\globmin}_2 \quad \text{for all } x \in\bbR^d.
		\end{align}
	\end{enumerate}
\end{definition}

In the case, where no gradient drift is present, i.e., $\lambda_3=0$ in Equations~\eqref{eq:CBO_micro_with_memory}, \eqref{eq:CBO_macro_with_memory} and~\eqref{eq:CBO_with_memory_weak}, the objective function~$\CE$ is only required to be continuous and satisfy Assumptions~\ref{asm:zero_global} and~\ref{asm:icp}.
While the former merely imposes that the infimum is attained at $\globmin$, the latter can be regarded as a tractability condition of the energy landscape of~$\CE$~\cite{fornasier2021consensus,fornasier2020consensus_sphere_convergence}.
More precisely, the inverse continuity condition~\eqref{eq:asm_icp_vstar} ensures that~$\CE$ is locally coercive in some neighborhood of the global minimizer~$\globmin$.
Condition~\eqref{eq:asm_icp_farfield}, on the other hand, guarantees that in the farfield $\CE$ is bounded away from the minimal value by at least $\CE_{\infty}$.
This in particular excludes objectives for which $\CE(x)\approx\minobj$ far away from~$\globmin$.
\revised{Note that \ref{asm:icp} actually already implies the uniqueness of $\globmin$ requested in \ref{asm:zero_global}.}
In case of an additional gradient drift term in the dynamics, i.e., $\lambda_3\not=0$, the objective naturally needs to be continuously differentiable.
Furthermore, in Assumption~\ref{asm:Lipschitz_gradient} we impose that the gradient~$\nabla\CE$ grows at most linearly.
This is a significantly weaker assumption compared to typical smoothness assumptions about $\CE$ in the optimization literature (in particular in the analysis of stochastic gradient descent), where Lipschitz-continuity of the gradient of $\CE$ is required~\cite{moulines2011non}.

We are now ready to state the main theoretical result.
Its proof is deferred to Section~\ref{sec:proof_main_theorem}.
\revised{For the reader's convenience let us recall that 
\begin{equation*}
	W_2^2\left(\rho_t,\delta_{(x^*,x^*)}\right)
	= \iint\! \left(\N{x-\globmin}_2^2 + \N{y
	-\globmin}_2^2\right) d\rho_t(x,y),
\end{equation*}
which motivates to investigate the behavior of the Lyapunov functional $\CV(\rho_t)$ as introduced in \eqref{eq:functional_V} below.}

\begin{theorem} \label{thm:global_convergence_main}
	Let $\CE\in\CC(\bbR^d)$ satisfy \ref{asm:zero_global} and \ref{asm:icp}.
	Furthermore, in the case of an active gradient drift in the CBO dynamcis~\eqref{eq:CBO_macro_with_memory}, i.e., if $\lambda_3\not=0$, let $\CE\in\CC^1(\bbR^d)$ obey in addition \ref{asm:Lipschitz_gradient}.
	Moreover, let $\rho_0 \in \CP_4(\bbR^d\times\bbR^d)$ be such that $(\globmin,\globmin)\in\supp(\rho_0)$.
	Let us define the functional
	\begin{equation} \label{eq:functional_V}
		\CV(\rho_t) := \frac{1}{2} \iint\! \left(\N{x-\globmin}_2^2 + \N{y-x}_2^2\right) d\rho_t(x,y),
	\end{equation}
	and the \revised{rates}
	\begin{subequations}
	\begin{align}
		\chi_1 &:= \min\left\{\lambda_1 \!-\! \lambda_2 \!-\! 3\lambda_3 C_{\nabla\CE} \!-\! 2\sigma_1^2 \!-\! 2\sigma_3^2C_{\nabla\CE}^2, 2\kappa\theta \!+\! \lambda_2 \!-\! \lambda_1 \!-\! \lambda_3 C_{\nabla\CE} \!-\! 2\sigma_2^2\right\}, \label{eq:chi_1} \\
		\revised{\chi_2} &:= \revised{\max\left\{3\lambda_1 \!+\! \lambda_2 \!+\! 3\lambda_3 C_{\nabla\CE} \!-\! 2\sigma_1^2 \!+\! 2\sigma_3^2C_{\nabla\CE}^2, 2\kappa\theta \!+\! 3\lambda_2 \!+\! \lambda_1 \!+\! \lambda_3 C_{\nabla\CE} \!-\! 2\sigma_2^2\right\},} \label{eq:chi_2}
	\end{align}
	\end{subequations}
	which we assume to be strictly positive through a sufficient choice of the parameters of the CBO dynamics.
	Furthermore, \revised{provided that $\CV(\rho_0)>0$,} fix any $\varepsilon \in (0,\CV(\rho_0))$, $\vartheta \in (0,1)$ and define the time horizon
	\begin{equation} \label{eq:end_time_star_statement}
		T^* := \frac{1}{(1-\vartheta)\revised{\chi_1}}\log\left(\frac{\CV(\rho_0)}{\varepsilon}\right).
	\end{equation}
	Then there exists $\alpha_0 > 0$, depending (among problem dependent quantities) also on $\varepsilon$ and $\vartheta$, such that for all $\alpha > \alpha_0$, if $\rho \in \CC([0,T^*], \CP_4(\bbR^d\times\bbR^d))$ is a weak solution to the Fokker-Planck equation~\eqref{eq:CBO_with_memory_weak} on the time interval $[0,T^*]$ with initial condition $\rho_0$,
	we have
	\revised{\begin{equation} \label{eq:thm:global_convergence_main:T_characterization}
		\CV(\rho_T) = \varepsilon
		\quad\text{with}\quad
		T \in \left[\frac{(1-\vartheta)\chi_1}{(1+\vartheta/2)\chi_2}T^*,T^*\right].
	\end{equation}}
	Furthermore, \revised{on the time interval $[0,T]$, $\CV(\rho_t)$ decays at least exponentially fast, i.e., for all $t\in[0,T]$ it holds}
	\begin{equation} \label{eq:thm:global_convergence_main:exponential_decay}
		W_2^2(\rho_t,\delta_{(\globmin,\globmin)})
		\leq 6\CV(\rho_t)
		\leq 6\CV(\rho_0) \exp\left(-(1-\vartheta)\chi_1 t\right).
	\end{equation}
\end{theorem}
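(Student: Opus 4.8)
The plan is to follow the Lyapunov strategy of \cite{fornasier2021consensus,fornasier2021convergence} and monitor $\CV(\rho_t)$ along the mean-field flow. First I would test the weak Fokker--Planck equation \eqref{eq:CBO_with_memory_weak} against the two functions $\phi_1(x,y):=\tfrac12\N{x-\globmin}_2^2$ and $\phi_2(x,y):=\tfrac12\N{y-x}_2^2$; both lie in the enlarged admissible class $\CC^2_{*}(\bbR^d\times\bbR^d)$ of Remark~\ref{rem:test_functions_redefine}, since their $x$-gradients grow at most linearly and their second $x$-derivatives are identically one. Summing the two resulting identities gives an exact formula for $\tfrac{d}{dt}\CV(\rho_t)$ consisting of the dissipative memory term $-\iint\kappa S^{\beta,\theta}(x,y)\N{y-x}_2^2\,d\rho_t$ (arising from $\nabla_y\phi_2$), the consensus-, memory- and gradient-drift contributions (paired with $\nabla_x\phi_1$ and $\nabla_x\phi_2$), and the three diffusion contributions, which under anisotropic noise collapse to $\tfrac12\iint(\sigma_1^2\N{x-\conspoint{\rho_{Y,t}}}_2^2+\sigma_2^2\N{x-y}_2^2+\sigma_3^2\N{\nabla\CE(x)}_2^2)\,d\rho_t$.

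Next I would convert this identity into a differential inequality. The memory term is bounded above using $S^{\beta,\theta}\ge\theta/2$; the gradient drift and the $\sigma_3$-diffusion term are absorbed via \ref{asm:Lipschitz_gradient}, which yields $\N{\nabla\CE(x)}_2\le C_{\nabla\CE}\N{x-\globmin}_2$; the consensus drift is split as $x-\conspoint{\rho_{Y,t}}=(x-\globmin)-(\conspoint{\rho_{Y,t}}-\globmin)$ to expose the dissipative $-\lambda_1\N{x-\globmin}_2^2$ part; and every remaining mixed inner product is controlled by Cauchy--Schwarz together with Young's inequality, the inner products against the deterministic vector $\conspoint{\rho_{Y,t}}-\globmin$ being integrated by Cauchy--Schwarz in $\rho_t$ so as to factor out a $\sqrt{\CV(\rho_t)}$. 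Keeping the coefficients sharp, so that admissibility of the parameters is exactly the strict positivity of \eqref{eq:chi_1}--\eqref{eq:chi_2}, leaves
\[
	\tfrac{d}{dt}\CV(\rho_t)\le -\chi_1\CV(\rho_t)+C\sqrt{\CV(\rho_t)}\,\N{\conspoint{\rho_{Y,t}}-\globmin}_2,
\]
and, reversing all estimates, the companion lower bound $\tfrac{d}{dt}\CV(\rho_t)\ge -\chi_2\CV(\rho_t)-C\sqrt{\CV(\rho_t)}\,\N{\conspoint{\rho_{Y,t}}-\globmin}_2$.

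The crux is to show that, while $\CV(\rho_t)>\varepsilon$, the nuisance $\N{\conspoint{\rho_{Y,t}}-\globmin}_2$ is so small for $\alpha$ large that it may be absorbed (by Young's inequality) into the $\vartheta$-slack, turning $\chi_1$ into $(1-\vartheta)\chi_1$ and $\chi_2$ into $(1+\vartheta/2)\chi_2$. This rests on two ingredients from \cite{fornasier2021consensus,fornasier2021convergence}. The first is a quantitative Laplace principle based on \ref{asm:icp}: for a suitably small radius $r$,
\[
	\N{\conspoint{\rho_{Y,t}}-\globmin}_\infty \le \frac{1}{\eta}\Big(\sup_{x\in B^\infty_{r}(\globmin)}\CE(x)-\minobj\Big)^{\nu}+r+\frac{c(\rho_{Y,t})\,e^{-\alpha c_r}}{\rho_{Y,t}(B^\infty_{r}(\globmin))},
\]
with a gap constant $c_r>0$ coming from \eqref{eq:asm_icp_farfield} and $c(\rho_{Y,t})$ growing only polynomially in the moments of $\rho_{Y,t}$ (themselves bounded on $[0,T^*]$ by the $\CP_4$-regularity of $\rho$); shrinking $r$ controls the first two summands and enlarging $\alpha$ the third. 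The second ingredient is a uniform-in-time lower bound $\rho_{Y,t}(B^\infty_{r}(\globmin))\ge c>0$ on $[0,T^*]$, obtained by testing \eqref{eq:CBO_with_memory_weak} against a smooth mollifier $\psi$ supported near $(\globmin,\globmin)$ and showing $\tfrac{d}{dt}\!\iint\!\psi\,d\rho_t\ge -c_{r,\alpha}\iint\!\psi\,d\rho_t$, where \ref{asm:Lipschitz_gradient} keeps the gradient drift small near $\globmin$, the memory drift enters only through the $\nabla_y\psi$ piece, and $(\globmin,\globmin)\in\supp(\rho_0)$ ensures $\iint\!\psi\,d\rho_0>0$. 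I expect this mass lower bound, and in particular its circular interplay with the Laplace estimate, to be the main obstacle: the constant $c_{r,\alpha}$ itself depends on the drift coefficients and hence on $\N{\conspoint{\rho_{Y,t}}-\globmin}_2$. The circularity is broken by a continuity/bootstrap argument: define $T_\alpha$ as the largest time up to which simultaneously $\CV(\rho_t)>\varepsilon$ and $\N{\conspoint{\rho_{Y,t}}-\globmin}_2$ stays below the prescribed tolerance, run all estimates with $\alpha$-independent constants on $[0,T_\alpha]$, and then choose $\alpha_0$ so large that for $\alpha>\alpha_0$ the Laplace estimate forbids the tolerance from being saturated before $\CV$ reaches $\varepsilon$, i.e. $T_\alpha\ge\inf\{t:\CV(\rho_t)=\varepsilon\}=:T$.

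With the nuisance absorbed, Grönwall's inequality on $[0,T]$ would give $\tfrac{d}{dt}\CV(\rho_t)\le -(1-\vartheta)\chi_1\CV(\rho_t)$, hence $\CV(\rho_t)\le\CV(\rho_0)e^{-(1-\vartheta)\chi_1 t}$; in particular $\CV$ reaches $\varepsilon$ no later than $T^*$, so $T\le T^*$ and $\CV(\rho_T)=\varepsilon$ by continuity. The companion estimate $\tfrac{d}{dt}\CV(\rho_t)\ge -(1+\vartheta/2)\chi_2\CV(\rho_t)$ gives $\CV(\rho_t)\ge\CV(\rho_0)e^{-(1+\vartheta/2)\chi_2 t}$, and evaluating at $t=T$ yields $(1+\vartheta/2)\chi_2 T\ge\log(\CV(\rho_0)/\varepsilon)=(1-\vartheta)\chi_1 T^*$, which is the lower endpoint in \eqref{eq:thm:global_convergence_main:T_characterization}. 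Finally \eqref{eq:thm:global_convergence_main:exponential_decay} follows from the decay estimate together with
\[
	W_2^2(\rho_t,\delta_{(\globmin,\globmin)})=\iint\!\big(\N{x-\globmin}_2^2+\N{y-\globmin}_2^2\big)\,d\rho_t\le\iint\!\big(3\N{x-\globmin}_2^2+2\N{y-x}_2^2\big)\,d\rho_t\le 6\CV(\rho_t),
\]
where I used $\N{y-\globmin}_2^2\le 2\N{y-x}_2^2+2\N{x-\globmin}_2^2$.
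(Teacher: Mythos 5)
Your proposal is correct and follows essentially the same route as the paper: testing the weak Fokker--Planck equation against $\tfrac12\N{x-\globmin}_2^2$ and $\tfrac12\N{y-x}_2^2$ to get two-sided differential inequalities for $\CV(\rho_t)$, a quantitative Laplace principle under \ref{asm:icp}, a mollifier-based lower bound on $\rho_{Y,t}(B^\infty_{r}(\globmin))$, and the bootstrap via the stopping time $T_\alpha$ to break exactly the circularity you identify. The only cosmetic difference is that your schematic upper bound omits the quadratic term $\sigma_1^2\N{\conspoint{\rho_{Y,t}}-\globmin}_2^2$ arising from the consensus diffusion, which the paper keeps and absorbs by choosing the tolerance $c(\vartheta,\chi_1,\lambda_1,\sigma_1)\sqrt{\CV(\rho_t)}$ as a minimum over two constraints; this does not affect the argument.
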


Theorem~\ref{thm:global_convergence_main} proves the exponentially fast convergence of the law~$\rho$ of the dynamics~\eqref{eq:CBO_macro_with_memory} to the global minimizer~$\globmin$ of $\CE$ under a minimal assumption about the initial distribution~$\rho_0$.
The result in particular allows to devise a strategy for the parameter choices of the method.
Namely, fixing the parameters $\lambda_2, \lambda_3,\sigma_1,\sigma_2,\sigma_3$ and $\theta$, choosing $\lambda_1$ and consecutively $\kappa$ such that
\begin{align*}
	\lambda_1 > \lambda_2 + 3\lambda_3 C_{\nabla\CE} + 2\sigma_1^2 + 2\sigma_3^2C_{\nabla\CE}^2
	\quad  \text{and} \quad
	\kappa > \frac{1}{2\theta} \left( - \lambda_2 + \lambda_1 + \lambda_3 C_{\nabla\CE} + 2\sigma_2^2\right)
\end{align*}
ensures that the convergence rate~\revised{$\chi_1$} is strictly positive.
\revised{Since $\chi_2\geq\chi_1$, $\chi_2>0$ as well.}
\revised{Given a desired accuracy~$\varepsilon$,} by consulting the proof in Section~\ref{subsec:proof_main}, we can further derive an estimate on the lower bound of~$\alpha$, namely
\begin{align*}
	\alpha_0 \sim d+\log16d + \log\left(\frac{\CV(\rho_0)}{\varepsilon}\right) - \log c\left(\vartheta,\chi_1,\lambda_1,\sigma_1\right)- \log\rho_0\big(B^\infty_{r}(\globmin)\times B^\infty_{r}(\globmin)\big)
\end{align*}
for some suitably small~$r\in(0,R_0)$, \revised{which, like the hidden constant, may depend on $\varepsilon$.}
The choice of the first set of parameters, in particular what concerns the drift towards the historical best and in the direction of the negative gradient, requires some manual hyperparameter tuning and depends on the problem at hand.
We will see this also in Section~\ref{sec:numerics}, where we conduct numerical experiments in different application areas.

Eventually, \revised{with \eqref{eq:end_time_star_statement}} one can determine the \revised{maximal} time horizon~$T^*$, \revised{until which the Lyapunov functional $\CV(\rho_t)$ is guaranteed to have reached the prescribed $\varepsilon$.}
\revised{The exact time point~$T$, where $\CV(\rho_T) = \varepsilon$, is characterized more concretely in Equation~\eqref{eq:thm:global_convergence_main:T_characterization}.
Due to the presence of memory effects and gradient information, which might counteract the consensus drift of CBO, it seems challenging to specify $T$ more closely.
However, in the case of standard CBO, $T$ turns out to be equal to $T^*$ up to a factor depending merely on $\vartheta$, see, e.g., \cite{fornasier2021consensus}.

In fact, this result can be retrieved as a special case of the subsequent Corollary~\ref{cor:global_convergence_main}, where we state} an analogous convergence result for the CBO dynamics with gradient information but without memory effect.
Its respective proof follows the lines of the one of the richer dynamics in Section~\ref{sec:proof_main_theorem}, cf.\@ also~\cite[Theorem~12]{fornasier2021consensus} and \cite[Theorem~2]{fornasier2021convergence} and it is left as an exercise to the reader interested in the technical details of the proof technique.
More precisely, for the instantaneous CBO model with gradient drift,
\begin{align} \label{eq:CBO_micro_without_memory}
\begin{split}
	d\widetilde{X}_{t}^i = \begin{aligned}[t] 
		 &\!-\lambda_1\big(\widetilde{X}_{t}^i-y_{\alpha}(\widehat{\widetilde{\rho}}_t\!^{N})\big) \,dt 
		    -\lambda_3\nabla\CE(\widetilde{X}_{t}^i) \,dt\\
		 &\!+\sigma_1 D\big(\widetilde{X}_{t}^i-y_{\alpha}(\widehat{\widetilde{\rho}}_t\!^{N})\big) \,d\widetilde{B}_{t}^{1,i}
		    +\sigma_3 D\big(\nabla\CE(\widetilde{X}_{t}^i)\big) \,d\widetilde{B}_{t}^{3,i},
		\end{aligned}
\end{split}
\end{align}
where \revised{$\widehat{\widetilde{\rho}}_t\!^{N}:=\frac{1}{N}\sum_{i=1}^N\delta_{\widetilde{X}_{t}^i}$} and
to which the associated mean-field Fokker-Planck equation reads
\begin{align} \label{eq:CBO_without_memory_weak}
\begin{split}
	\partial_t\widetilde\rho_t
	&= \divergence \big( \big( \lambda_1\left(x - y_{\alpha}(\widetilde\rho_{t})\right) + \lambda_3\nabla\CE(x) \big)\widetilde\rho_t \big) \\
	&\quad\, + \frac{1}{2}\sum_{k=1}^d\partial^2_{x_kx_k}\left(\left(\sigma_1^2D\!\left(x-y_{\alpha}(\widetilde\rho_{t})\right)_{kk}^2 + \sigma_3^2D\!\left(\nabla\CE(x)\right)_{kk}^2\right)\widetilde\rho_t\right),
\end{split}	
\end{align}
we have the following convergence result.

\begin{corollary} \label{cor:global_convergence_main}
	Let $\CE\in\CC(\bbR^d)$ satisfy \ref{asm:zero_global} and \ref{asm:icp}.
	Furthermore, in the case of an active gradient drift, i.e., if $\lambda_3\not=0$, let $\CE\in\CC^1(\bbR^d)$ obey in addition \ref{asm:Lipschitz_gradient}.
	Moreover, let $\widetilde\rho_0 \in \CP_4(\bbR^d)$ be such that $\globmin\in\supp(\widetilde\rho_0)$.
	Let us define the functional
	\begin{equation}
		\widetilde\CV(\widetilde\rho_t) := \frac{1}{2} \int\! \N{x-\globmin}_2^2 d\widetilde\rho_t(x),
	\end{equation}
	and the \revised{rates}
	\begin{subequations}
	\begin{align}
		\revised{\widetilde\chi_1} &:= 2\lambda_1 - 2\lambda_3 C_{\nabla\CE} - \sigma_1^2 - \sigma_3^2C_{\nabla\CE}^2, \\
		\revised{\widetilde\chi_2} &:= \revised{2\lambda_1 + 2\lambda_3 C_{\nabla\CE} - \sigma_1^2 + \sigma_3^2C_{\nabla\CE}^2,}
	\end{align}
	\end{subequations}
	which we assume to be strictly positive through a sufficient choice of the parameters of the CBO dynamics.
	Furthermore, \revised{provided that $\widetilde\CV(\widetilde\rho_0)>0$,}fix any $\varepsilon \in (0,\widetilde\CV(\widetilde\rho_0))$, $\vartheta \in (0,1)$ and define the time horizon
	\begin{equation} \label{eq:end_time_star_statement_without_memory}
		\widetilde T^* := \frac{1}{(1-\vartheta)\revised{\widetilde\chi_1}}\log\bigg(\frac{\widetilde\CV(\widetilde\rho_0)}{\varepsilon}\bigg).
	\end{equation}
	Then there exists $\widetilde\alpha_0 > 0$, depending (among problem dependent quantities) also on $\varepsilon$ and $\vartheta$, such that for all $\alpha > \widetilde\alpha_0$, if $\widetilde\rho \in \CC([0,T^*], \CP_4(\bbR^d))$ is a weak solution to the Fokker-Planck equation~\eqref{eq:CBO_without_memory_weak} on the time interval $[0,\widetilde T^*]$ with initial condition $\widetilde\rho_0$, we have
	\revised{\begin{equation} \label{eq:thm:global_convergence_main:T_characterization_without_memory}
		\widetilde\CV(\widetilde\rho_{\widetilde{T}}) = \varepsilon
		\quad\text{with}\quad
		\widetilde{T} \in \left[\frac{(1-\vartheta)\widetilde\chi_1}{(1+\vartheta/2)\widetilde\chi_2}\widetilde{T}^*,\widetilde{T}^*\right].
	\end{equation}}
	Furthermore, \revised{on the time interval $[0,\widetilde{T}]$, $\widetilde\CV(\widetilde\rho_t)$ decays at least exponentially fast, i.e., for all $t\in[0,\widetilde{T}]$ it holds}
	\begin{equation} \label{eq:thm:global_convergence_main:exponential_decay_without_memory}
		W_2^2(\widetilde\rho_t,\delta_{\globmin})
		= 2\widetilde\CV(\widetilde\rho_t)
		\leq 2\widetilde\CV(\widetilde\rho_0) \exp\left(-(1-\vartheta)\widetilde\chi_1 t\right).
	\end{equation}
\end{corollary}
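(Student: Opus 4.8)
The plan is to follow the Lyapunov-function approach of \cite{fornasier2021consensus,fornasier2021convergence} that also underlies Section~\ref{sec:proof_main_theorem}, specialised to the memoryless dynamics~\eqref{eq:CBO_micro_without_memory}, where the relevant Lyapunov functional is simply $\widetilde\CV(\widetilde\rho_t)=\tfrac12\int\N{x-\globmin}_2^2\,d\widetilde\rho_t(x)$ and no $Y$-variable is present. Since $\phi(x)=\tfrac12\N{x-\globmin}_2^2$ has a linearly growing gradient and constant second derivatives, it belongs to the (memoryless analogue of the) enlarged admissible test-function class $\CC^2_*$ from~\eqref{eq:function_space}, which is legitimate because $\widetilde\rho\in\CC([0,\widetilde T^*],\CP_4(\bbR^d))$ by the (memoryless analogue of the) existence result of Theorem~\ref{thm:well-posedness_FP} together with Remark~\ref{rem:test_functions_redefine}. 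Inserting $\phi$ into the weak formulation of~\eqref{eq:CBO_without_memory_weak} and using $\sum_{k}D(z)_{kk}^2=\N{z}_2^2$ for anisotropic diffusion, one obtains
\begin{align*}
	\frac{d}{dt}\widetilde\CV(\widetilde\rho_t)
	&= -\lambda_1\!\int\!\EUSP{x-\globmin}{x-y_\alpha(\widetilde\rho_t)}\,d\widetilde\rho_t(x)
	 -\lambda_3\!\int\!\EUSP{x-\globmin}{\nabla\CE(x)}\,d\widetilde\rho_t(x) \\
	&\quad +\frac{\sigma_1^2}{2}\!\int\!\N{x-y_\alpha(\widetilde\rho_t)}_2^2\,d\widetilde\rho_t(x)
	 +\frac{\sigma_3^2}{2}\!\int\!\N{\nabla\CE(x)}_2^2\,d\widetilde\rho_t(x).
\end{align*}

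Next I would decompose $x-y_\alpha(\widetilde\rho_t)=(x-\globmin)+(\globmin-y_\alpha(\widetilde\rho_t))$, control the two gradient contributions by Assumption~\ref{asm:Lipschitz_gradient}, and estimate the remaining cross terms by Cauchy--Schwarz together with Jensen's inequality (so that the deviation of the mean of $\widetilde\rho_t$ from $\globmin$ is bounded by $\sqrt{2\widetilde\CV(\widetilde\rho_t)}$). This yields, with a constant $b>0$ depending only on $\lambda_1,\sigma_1$, the two-sided differential inequality
\begin{align*}
	-\widetilde\chi_2\,\widetilde\CV(\widetilde\rho_t) - b\sqrt{\widetilde\CV(\widetilde\rho_t)}\,\N{y_\alpha(\widetilde\rho_t)-\globmin}_2
	&\leq \frac{d}{dt}\widetilde\CV(\widetilde\rho_t) \\
	&\leq -\widetilde\chi_1\,\widetilde\CV(\widetilde\rho_t) + b\sqrt{\widetilde\CV(\widetilde\rho_t)}\,\N{y_\alpha(\widetilde\rho_t)-\globmin}_2 + \tfrac{\sigma_1^2}{2}\N{y_\alpha(\widetilde\rho_t)-\globmin}_2^2,
\end{align*}
in which the rates $\widetilde\chi_1$ and $\widetilde\chi_2$ come out precisely as defined in the statement. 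The core of the argument is then to make $\N{y_\alpha(\widetilde\rho_t)-\globmin}_2$ small. This is the quantitative Laplace principle of \cite{fornasier2021consensus,fornasier2020consensus_sphere_convergence}: using Assumption~\ref{asm:icp}, for every $\delta>0$ there exist $r\in(0,R_0)$ and $\alpha_0>0$ such that $\N{y_\alpha(\widetilde\rho_t)-\globmin}_\infty\leq\delta$ for all $\alpha>\alpha_0$, as soon as $\widetilde\rho_t$ retains a positive amount of mass in $B^\infty_r(\globmin)$ and has a bounded second moment. The second moment is uniformly bounded on $[0,\widetilde T^*]$ by the $\CP_4$-regularity of $\widetilde\rho$, while the mass lower bound $\widetilde\rho_t(B^\infty_r(\globmin))\geq c_r e^{-Ct}>0$ follows, as in \cite{carrillo2018analytical,fornasier2021consensus}, from $\globmin\in\supp(\widetilde\rho_0)$ via a comparison/sub-solution argument for~\eqref{eq:CBO_without_memory_weak}; this applies because the drift and diffusion coefficients stay locally bounded ($y_\alpha(\widetilde\rho_t)$ lies in the convex hull of $\supp\widetilde\rho_t$, and $\nabla\CE$ is continuous since $\CE\in\CC^1$).

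Finally I would run the standard bootstrap in time. Set $\widetilde T:=\inf\{t\geq 0:\widetilde\CV(\widetilde\rho_t)\leq\varepsilon\}$, which is positive since $\widetilde\CV(\widetilde\rho_0)>\varepsilon$ and $t\mapsto\widetilde\CV(\widetilde\rho_t)$ is continuous. On $[0,\min\{\widetilde T,\widetilde T^*\})$ one has $\widetilde\CV(\widetilde\rho_t)>\varepsilon$, hence $\sqrt{\widetilde\CV(\widetilde\rho_t)}\leq\widetilde\CV(\widetilde\rho_t)/\sqrt\varepsilon$ and $\N{y_\alpha(\widetilde\rho_t)-\globmin}_2^2\leq\delta\,\N{y_\alpha(\widetilde\rho_t)-\globmin}_2$; choosing $\delta$ small enough (depending on $\varepsilon,\vartheta,\widetilde\chi_1,\widetilde\chi_2,\lambda_1,\sigma_1$) and then $\alpha_0$ accordingly, the error terms in the displayed inequality are absorbed, leaving $-(1+\vartheta/2)\widetilde\chi_2\,\widetilde\CV(\widetilde\rho_t)\leq\frac{d}{dt}\widetilde\CV(\widetilde\rho_t)\leq-(1-\vartheta)\widetilde\chi_1\,\widetilde\CV(\widetilde\rho_t)$. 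Grönwall's inequality applied to the upper bound shows $\widetilde\CV$ must reach $\varepsilon$ no later than $\widetilde T^*$, so $\widetilde T\leq\widetilde T^*$ and, by continuity, $\widetilde\CV(\widetilde\rho_{\widetilde T})=\varepsilon$; integrating the lower bound up to $\widetilde T$ gives $\widetilde T\geq\tfrac{(1-\vartheta)\widetilde\chi_1}{(1+\vartheta/2)\widetilde\chi_2}\widetilde T^*$; and the upper bound on $[0,\widetilde T]$ is exactly~\eqref{eq:thm:global_convergence_main:exponential_decay_without_memory} once one uses the identity $W_2^2(\widetilde\rho_t,\delta_{\globmin})=\int\N{x-\globmin}_2^2\,d\widetilde\rho_t(x)=2\widetilde\CV(\widetilde\rho_t)$. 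Bookkeeping of the constants entering the Laplace principle and the mass bound yields the announced quantitative estimate on $\widetilde\alpha_0$. I expect the genuinely delicate ingredient to be the quantitative Laplace principle together with the non-depletion (mass lower bound) estimate on $[0,\widetilde T^*]$, whereas the gradient drift itself enters only through the elementary bounds provided by Assumption~\ref{asm:Lipschitz_gradient} and causes no additional difficulty once $\widetilde\chi_1>0$ is imposed.
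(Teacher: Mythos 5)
Your proposal follows exactly the route the paper intends for this corollary (which it explicitly leaves as an exercise, referring to the proof of Theorem~\ref{thm:global_convergence_main}): the weak formulation applied to $\phi(x)=\tfrac12\N{x-\globmin}_2^2$ (admissible by the analogue of Remark~\ref{rem:test_functions_redefine}), two-sided differential inequalities producing $\widetilde\chi_1$ and $\widetilde\chi_2$, the quantitative Laplace principle of Proposition~\ref{lem:laplace_quant}, a mass lower bound of the type of Proposition~\ref{lem:lower_bound_probability_memoryCBO} (which in the paper is a mollifier-plus-Gr\"onwall argument, i.e.\ precisely the ``sub-solution'' device you allude to), and a continuity/bootstrap argument in time. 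The decomposition, the constants, and the three conclusions \eqref{eq:thm:global_convergence_main:T_characterization_without_memory}--\eqref{eq:thm:global_convergence_main:exponential_decay_without_memory} all come out as in Section~\ref{subsec:proof_main}.

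There is, however, one concrete point where your bootstrap is too coarse. You define $\widetilde T:=\inf\{t\ge 0:\widetilde\CV(\widetilde\rho_t)\le\varepsilon\}$ and invoke the mass lower bound ``on $[0,\widetilde T^*]$''. But the decay rate $p$ in the mass estimate (cf.\ \eqref{eq:def_p_memory} and the constant $C_\Upsilon$ in \eqref{eq:C_Upsilon}) depends on an a priori bound $B$ with $\sup_{t}\Nnormal{\conspoint{\widetilde\rho_t}-\globmin}_2\le B$, and this $B$ must be \emph{independent of $\alpha$}: otherwise the final step, in which $\alpha_0$ is chosen large enough that $e^{-\alpha q_\varepsilon}$ beats $e^{+p\widetilde T^*}/\widetilde\rho_0(B^\infty_{r_\varepsilon/2}(\globmin))$, does not close. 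A bound on $\Nnormal{\conspoint{\widetilde\rho_t}}_2$ obtained merely from continuity on the compact interval would in general depend on $\alpha$. This is exactly why the paper's stopping time \eqref{eq:endtime_T} is defined as the supremum of times on which \emph{both} $\CV(\rho_{t'})>\varepsilon$ \emph{and} $\Nnormal{\conspoint{\rho_{Y,t'}}-\globmin}_2<C(t')$ hold, so that $B=C(0)$ is available by construction on $[0,T_\alpha]$, and the case $\CV(\rho_{T_\alpha})>\varepsilon$ is then excluded by a contradiction argument rather than assumed away. To repair your version you should either adopt this two-condition stopping time or otherwise establish an $\alpha$-independent a priori bound on $\Nnormal{\conspoint{\widetilde\rho_t}-\globmin}_2$ up to the stopping time before appealing to the mass lower bound. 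Apart from this bookkeeping issue (and the harmless $\sqrt d$ lost when passing from the $\ell_\infty$- to the $\ell_2$-bound on the consensus point), the argument is sound.
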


\section{Proof details for Section~\ref{subsec:main_results}} \label{sec:proof_main_theorem}
In what follows we provide the proof details for the global mean-field law convergence result of CBO with memory effects and gradient information, Theorem~\ref{thm:global_convergence_main}.
The entire section can be read as a proof sketch with \revised{Corollaries~\ref{cor:evolution_of_functionalV} and~\ref{cor:evolution_of_functionalV_lower},} Proposition~\ref{lem:laplace_quant} and Proposition~\ref{lem:lower_bound_probability_memoryCBO} containing the key individual statements.
How to combine these results rigorously to complete the proof of Theorem~\ref{thm:global_convergence_main} is then covered in detail in Section~\ref{subsec:proof_main}.

\begin{remark} \label{rem:wlog_minobj_zero}
	Without loss of generality we assume $\minobj = 0$ throughout this section.
\end{remark}

\subsection{Evolution of the Mean-Field Limit} \label{subsec:evolution_convex}

Recall that our overall goal is to establish the convergence of the dynamics~\eqref{eq:CBO_with_memory_weak} to a Dirac delta at the global minimizer~$\globmin$ with respect to the Wasserstein-$2$ distance, i.e.,
\begin{equation*}
	W_2\!\left(\rho_t, \delta_{(\globmin,\globmin)}\right) \rightarrow 0 \quad \text{as} \quad t\rightarrow\infty.
\end{equation*}
To this end we analyze the decay behavior of the functional~$\CV(\rho_t)$ as defined in~\eqref{eq:functional_V}, i.e., $\CV(\rho_t) = \frac{1}{2} \iint \!\big(\!\N{x-\globmin}_2^2 + \N{y-x}_2^2\!\big)\,d\rho_t(x,y)$.
More precisely, we will show its exponential decay with a rate controllable through the parameters of the CBO method.

Let us start below with deriving the evolution inequalities for the functionals
\begin{equation*} 
	\CX(\rho_t) = \frac{1}{2} \iint\! \N{x-\globmin}_2^2 d\rho_t(x,y)
	\quad\text{and}\quad
	\CY(\rho_t) = \frac{1}{2} \iint\! \N{y-x}_2^2 d\rho_t(x,y).
\end{equation*}

\begin{lemma} \label{lem:evolution_of_functionalXY}
	Let $\CE : \bbR^d \rightarrow \bbR$, and fix $\alpha,\lambda_1,\sigma_1 > 0$ and $\lambda_2,\sigma_2,\lambda_3,\sigma_3,\beta,\kappa,\theta \geq 0$.
	Moreover, let $T>0$ and let $\rho \in \CC([0,T], \CP_4(\bbR^d\times\bbR^d))$ be a weak solution to the Fokker-Planck equation~\eqref{eq:CBO_with_memory_weak}.
	Then the functionals $\CX(\rho_t)$ and $\CY(\rho_t)$ satisfy
	\begin{equation*}
	\begin{split}
		\frac{d}{dt}\!\begin{pmatrix}
			\CX(\rho_t)\\
			\CY(\rho_t)
		\end{pmatrix}
		\!\leq
		&-\!\begin{pmatrix}
			2\lambda_1 \!-\! \lambda_2 \!-\! 2\lambda_3 C_{\nabla\CE} \!-\! \sigma_1^2 \!-\! \sigma_3^2C_{\nabla\CE}^2 &\!\!\! -\lambda_2\!-\!\sigma_2^2 \\
			-\lambda_1 \!-\! \lambda_3 C_{\nabla\CE} \!-\! \sigma_1^2 \!-\! \sigma_3^2C_{\nabla\CE}^2 &\!\!\! 2\kappa\theta \!+\! 2\lambda_2 \!-\! \lambda_1 \!-\! \lambda_3 C_{\nabla\CE} - \sigma_2^2
		\end{pmatrix}\!
		\begin{pmatrix}
			\CX(\rho_t)\\
			\CY(\rho_t)
		\end{pmatrix}\\
		&+
		\sqrt{2}\begin{pmatrix}
			\left(\lambda_1\!+\!\sigma_1^2\right) \!\sqrt{\CX(\rho_t)} \\
			\lambda_1 \sqrt{\CY(\rho_t)} \!+\! \sigma_1^2 \sqrt{\CX(\rho_t)}
		\end{pmatrix}\!\N{\conspoint{\rho_{Y,t}}\!-\!\globmin}_2
		\!+\!
		\frac{\sigma_1^2}{2} \!\begin{pmatrix}
			1\\
			1
		\end{pmatrix}\!\N{\conspoint{\rho_{Y,t}}\!-\!\globmin}_2^2,
	\end{split}
	\end{equation*}
	where the inequality has to be understood \revised{component-wise.}
\end{lemma}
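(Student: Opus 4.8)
The plan is to test the weak formulation~\eqref{eq:weak_solution_identity} of the Fokker--Planck equation~\eqref{eq:CBO_with_memory_weak} against the two non-negative functions $\phi_{\CX}(x,y):=\tfrac12\N{x-\globmin}_2^2$ and $\phi_{\CY}(x,y):=\tfrac12\N{y-x}_2^2$, and to estimate the resulting terms one at a time, summing them into the claimed matrix inequality. Neither function has compact support, but both belong to the enlarged test class $\CC^2_{*}(\bbR^d\times\bbR^d)$ of~\eqref{eq:function_space}, because their $x$-gradients ($x-\globmin$, resp.\ $x-y$) grow at most linearly and their second $x$-derivatives equal $1$; hence by Remark~\ref{rem:test_functions_redefine} the identity~\eqref{eq:weak_solution_identity} is applicable to them, and the assumed regularity $\rho\in\CC([0,T],\CP_4(\bbR^d\times\bbR^d))$ makes all integrals below finite and $t\mapsto\CX(\rho_t),\CY(\rho_t)$ differentiable. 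Throughout I would exploit that $\sum_{k=1}^d D(z)_{kk}^2=\N{z}_2^2$ in the anisotropic case, together with the Jensen/Cauchy--Schwarz bounds $\iint\N{x-\globmin}_2\,d\rho_t\le\sqrt{2\CX(\rho_t)}$ and $\iint\N{y-x}_2\,d\rho_t\le\sqrt{2\CY(\rho_t)}$.

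For the first row I would insert $\phi_{\CX}$ (so that $\nabla_x\phi_{\CX}=x-\globmin$ and $\nabla_y\phi_{\CX}=0$). Writing $x-\conspoint{\rho_{Y,t}}=(x-\globmin)-(\conspoint{\rho_{Y,t}}-\globmin)$ turns the consensus drift $-\lambda_1\iint\langle x-\conspoint{\rho_{Y,t}},x-\globmin\rangle\,d\rho_t$ into $-2\lambda_1\CX(\rho_t)$ plus a remainder of size $\sqrt2\,\lambda_1\sqrt{\CX(\rho_t)}\,\N{\conspoint{\rho_{Y,t}}-\globmin}_2$, and the same expansion of $\tfrac{\sigma_1^2}{2}\iint\N{x-\conspoint{\rho_{Y,t}}}_2^2\,d\rho_t$ produces $\sigma_1^2\CX(\rho_t)$, a further $\sqrt2\,\sigma_1^2\sqrt{\CX(\rho_t)}\,\N{\conspoint{\rho_{Y,t}}-\globmin}_2$, and $\tfrac{\sigma_1^2}{2}\N{\conspoint{\rho_{Y,t}}-\globmin}_2^2$. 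Young's inequality on $\langle x-y,x-\globmin\rangle$ bounds the memory drift by $\lambda_2(\CX(\rho_t)+\CY(\rho_t))$; the $\sigma_2$-diffusion term is exactly $\sigma_2^2\CY(\rho_t)$; and Assumption~\ref{asm:Lipschitz_gradient}, $\N{\nabla\CE(x)}_2\le C_{\nabla\CE}\N{x-\globmin}_2$, bounds the gradient drift by $2\lambda_3 C_{\nabla\CE}\CX(\rho_t)$ and the $\sigma_3$-diffusion term by $\sigma_3^2 C_{\nabla\CE}^2\CX(\rho_t)$. Summing reproduces the first component of the asserted inequality.

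For the second row I would insert $\phi_{\CY}$ (so that $\nabla_x\phi_{\CY}=x-y$, $\nabla_y\phi_{\CY}=y-x$ and $\partial^2_{x_kx_k}\phi_{\CY}=1$). The $\lambda_1$-, $\lambda_3$- and $\sigma_1$-terms are treated as before, now applying Young and Assumption~\ref{asm:Lipschitz_gradient} to $\langle x-\conspoint{\rho_{Y,t}},x-y\rangle$ and $\langle\nabla\CE(x),x-y\rangle$ and again splitting off $\conspoint{\rho_{Y,t}}-\globmin$; the $\lambda_2$- and $\sigma_2$-terms contribute $-2\lambda_2\CY(\rho_t)$ and $\sigma_2^2\CY(\rho_t)$ directly. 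The genuinely new piece is the memory term $-\kappa\iint S^{\beta,\theta}(x,y)\langle y-x,y-x\rangle\,d\rho_t=-\kappa\iint S^{\beta,\theta}(x,y)\N{y-x}_2^2\,d\rho_t$; bounding the kernel $S^{\beta,\theta}(x,y)=\tfrac12(1+\theta+\tanh(\beta(\CE(y)-\CE(x))))$ below by its pointwise infimum (a positive constant proportional to $\theta$, via $\tanh\ge-1$) turns it into a negative multiple of $\CY(\rho_t)$ of order $\kappa\theta$, which supplies the decisive negative $\CY$-coefficient. Collecting all pieces gives the second component, and the two rows together are exactly the asserted componentwise estimate.

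The calculation itself is essentially bookkeeping; I expect the only delicate points to be: (i) checking that the non-compactly-supported $\phi_{\CX},\phi_{\CY}$ are admissible test functions, which rests on the enlargement of the test class in Remark~\ref{rem:test_functions_redefine} and on the fourth-moment regularity of $\rho$ (so that~\eqref{eq:weak_solution_identity} and the vanishing of the It\^o martingale contributions are legitimate); and (ii) organizing the many cross terms coming from the coupling of the $x$- and $y$-components --- consensus, memory and gradient drift, each with its own noise --- so that the remainder comes out exactly as an expression linear in $(\CX(\rho_t),\CY(\rho_t))$ plus terms carrying the factor $\N{\conspoint{\rho_{Y,t}}-\globmin}_2$. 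No sign cancellation is subtle in this lemma: the negativity of the leading coefficients is not asserted here, but only emerges once the two rows are combined and the quantitative Laplace principle (Proposition~\ref{lem:laplace_quant}) is invoked to render $\N{\conspoint{\rho_{Y,t}}-\globmin}_2$ small --- which is where the real difficulty of the global-convergence proof lies.
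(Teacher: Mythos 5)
Your proposal is correct and follows essentially the same route as the paper: testing the weak formulation against $\phi_{\CX}$ and $\phi_{\CY}$ (admissible via the enlarged class $\CC^2_{*}$), expanding $x-\conspoint{\rho_{Y,t}}=(x-\globmin)-(\conspoint{\rho_{Y,t}}-\globmin)$, applying Cauchy--Schwarz/Young and Assumption~\ref{asm:Lipschitz_gradient}, and bounding $S^{\beta,\theta}$ below by a constant multiple of $\theta$ to produce the $\kappa\theta$-coefficient. The only point to pin down is that the stated entry $2\kappa\theta$ requires the bound $S^{\beta,\theta}\geq\theta$ (as the paper asserts), whereas $\tanh\geq-1$ alone only gives $S^{\beta,\theta}\geq\theta/2$; your ``of order $\kappa\theta$'' glosses over this, but it affects only the constant, not the argument.
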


\begin{proof}
	We note that the functions $\phi_\CX(x,y) = 1/2\N{x-\globmin}_2^2$ and $\phi_\CY(x,y) = 1/2\N{y-x}_2^2$ are in $\CC^2_{*}(\bbR^d\times\bbR^d)$ and recall that $\rho$ satisfies the weak solution identity~\eqref{eq:weak_solution_identity} for such test functions.
	Hence, by applying \eqref{eq:weak_solution_identity} with $\phi_\CX$ and $\phi_\CY$ as above, we obtain for the evolution of~$\CX(\rho_t)$
	\begin{equation} \label{eq:proof:evolution_of_objective_X_aux}
	\begin{aligned}
		&\frac{d}{dt} \CX(\rho_t) =
		- \iint \lambda_1\!\left\langle x \!-\! \conspoint{\rho_{Y,t}}, x\!-\!\globmin \right\rangle + \lambda_2 \!\left\langle x\!-\!y, x\!-\!\globmin \right\rangle + \lambda_3 \!\left\langle \nabla\CE(x), x\!-\!\globmin \right\rangle d\rho_t(x,y)\\
		&\qquad\qquad\;\;\, + \frac{1}{2} \iint \sigma_1^2\N{x\!-\!\conspoint{\rho_{Y,t}}}_2^2 + \sigma_2^2\N{x\!-\!y}_2^2 + \sigma_3^2\N{\nabla\CE(x)}_2^2 d\rho_t(x,y)
	\end{aligned}
	\end{equation}
	and for the evolution of $\CY(\rho_t)$
	\begin{equation} \label{eq:proof:evolution_of_objective_Y_aux}
	\begin{aligned}
		&\frac{d}{dt} \CY(\rho_t) =
		- \iint \kappa S^{\beta,\theta}(x,y) \N{x-y}_2^2 d\rho_t(x,y)\\
		&\qquad\qquad\;\,\, - \iint \lambda_1\left\langle x - \conspoint{\rho_{Y,t}}, x-y \right\rangle + \lambda_2 \N{x-y}_2^2 + \lambda_3\left\langle \nabla\CE(x), x-y \right\rangle d\rho_t(x,y)\\
		&\qquad\qquad\;\,\, + \frac{1}{2} \iint  \sigma_1^2\N{x-\conspoint{\rho_{Y,t}}}_2^2 + \sigma_2^2\N{x-y}_2^2 + \sigma_3^2\N{\nabla\CE(x)}_2^2 d\rho_t(x,y).
	\end{aligned}
	\end{equation}
	Here we used $\nabla_x\phi_\CX(x,y) = x-\globmin$, $\nabla_y\phi_\CX(x,y)=0$, $\partial^2_{x_kx_k} \phi_\CX(x,y) = 1$, $\nabla_x\phi_\CY(x,y) = x-y$, $\nabla_y\phi_\CY(x,y)=y-x$ and $\partial^2_{x_kx_k} \phi_\CY(x,y) = 1$.
	Let us now collect auxiliary estimates in \eqref{eq:proof:auxiliary_estimates_a}--\eqref{eq:proof:auxiliary_estimates_g}, which turn out to be useful in establishing upper bounds for \eqref{eq:proof:evolution_of_objective_X_aux} and \eqref{eq:proof:evolution_of_objective_Y_aux}.
	Using standard tools such as Cauchy-Schwarz and Young's inequality we have
	\begin{subequations} \label{eq:proof:auxiliary_estimates}
	\begin{align}
		-\left\langle x-y, x-\globmin \right\rangle
		&\leq \N{x-y}_2\N{x-\globmin}_2
		\leq \frac{1}{2}\!\left(\N{x-y}_2^2+\N{x-\globmin}_2^2\right),
		\label{eq:proof:auxiliary_estimates_a}\\%
		-\left\langle x-\conspoint{\rho_{Y,t}}, x-\globmin \right\rangle 
		&= -\N{x-\globmin}_2^2 - \left\langle \globmin-\conspoint{\rho_{Y,t}}, x-\globmin \right\rangle \nonumber\\
		&\leq -\N{x-\globmin}_2^2 + \N{\conspoint{\rho_{Y,t}}-\globmin}_2 \N{x-\globmin}_2, 
		\label{eq:proof:auxiliary_estimates_b}\\%
		-\left\langle x-\conspoint{\rho_{Y,t}}, x-y \right\rangle 
		&= -\left\langle x-\globmin, x-y \right\rangle - \left\langle \globmin-\conspoint{\rho_{Y,t}}, x-y \right\rangle \nonumber\\
		&\leq \frac{1}{2}\!\left(\N{x-y}_2^2+\N{x-\globmin}_2^2\right) + \N{\conspoint{\rho_{Y,t}}-\globmin}_2\N{x-y}_2, 
		\label{eq:proof:auxiliary_estimates_c}\\%
		\N{x-\conspoint{\rho_{Y,t}}}_2^2
		&= \N{x-\globmin}_2^2 - 2\left\langle\conspoint{\rho_{Y,t}}-\globmin,x-\globmin\right\rangle + \N{\conspoint{\rho_{Y,t}}-\globmin}_2^2 \nonumber\\
		&\leq \N{x-\globmin}_2^2 + 2\N{\conspoint{\rho_{Y,t}}-\globmin}_2\N{x-\globmin}_2 + \N{\conspoint{\rho_{Y,t}}-\globmin}_2^2,
		\label{eq:proof:auxiliary_estimates_d}\\%
		\intertext{where in \eqref{eq:proof:auxiliary_estimates_b}--\eqref{eq:proof:auxiliary_estimates_d} we expanded the left-hand side of the scalar product and the norm by subtracting and adding $\globmin$.
		Furthermore, by means of \ref{asm:Lipschitz_gradient} we obtain}
		-\left\langle \nabla\CE(x), x-\globmin \right\rangle
		&\leq \N{\nabla\CE(x)}_2\N{x-\globmin}_2
		\leq C_{\nabla\CE}\N{x-\globmin}_2^2,
		\label{eq:proof:auxiliary_estimates_e}\\%
		-\left\langle \nabla\CE(x), x-y \right\rangle
		&\leq \N{\nabla\CE(x)}_2\N{x-y}_2
		\leq C_{\nabla\CE}\N{x-\globmin}_2\N{x-y}_2 \nonumber \\
		&\leq \frac{C_{\nabla\CE}}{2}\left(\N{x-y}_2^2+\N{x-\globmin}_2^2\right),
		\label{eq:proof:auxiliary_estimates_f}\\%
		\N{\nabla\CE(x)}_2^2
		&\leq C_{\nabla\CE}^2\N{x-\globmin}_2^2
		\label{eq:proof:auxiliary_estimates_g}.
	\end{align}
	\end{subequations}
	Integrating the bounds~\eqref{eq:proof:auxiliary_estimates_a}, \eqref{eq:proof:auxiliary_estimates_b}, \eqref{eq:proof:auxiliary_estimates_d}, \eqref{eq:proof:auxiliary_estimates_e} and \eqref{eq:proof:auxiliary_estimates_g} into Equation~\eqref{eq:proof:evolution_of_objective_X_aux} results in the upper bound 
	\begin{equation*}
	\begin{aligned}
		&\frac{d}{dt} \CX(\rho_t) 
		\leq 
		- \left( 2\lambda_1 - \lambda_2 - 2\lambda_3 C_{\nabla\CE} - \sigma_1^2 - \sigma_3^2C_{\nabla\CE}^2 \right) \CX(\rho_t)
		+ \left(\lambda_2+\sigma_2^2\right) \CY(\rho_t) \\
		&\qquad\qquad\;\;\, + \sqrt{2}\left(\lambda_1+\sigma_1^2\right) \sqrt{\CX(\rho_t)} \N{\conspoint{\rho_{Y,t}}-\globmin}_2 + \frac{\sigma_1^2}{2}\N{\conspoint{\rho_{Y,t}}-\globmin}_2^2,
	\end{aligned}
	\end{equation*}
	where we furthermore used that by Jensen's inequality
	\begin{equation} \label{eq:proof:E_rho_t-globmin}
		\iint \N{x-\globmin}_2 d\rho_t(x,y)
		\leq \sqrt{\iint \N{x-\globmin}_2^2 d\rho_t(x,y)}
		=\sqrt{2\CX(\rho_t)}.
	\end{equation}
	For Equation~\eqref{eq:proof:evolution_of_objective_Y_aux} we first note that, by definition, $S^{\beta,\theta}\geq\theta$ uniformly.
	This combined with the bounds~\eqref{eq:proof:auxiliary_estimates_c}, \eqref{eq:proof:auxiliary_estimates_d}, \eqref{eq:proof:auxiliary_estimates_f} and \eqref{eq:proof:auxiliary_estimates_g} allows to derive
	\begin{equation*}
	\begin{aligned}
		&\frac{d}{dt} \CY(\rho_t)
		\leq
		- \left(2\kappa\theta + 2\lambda_2 - \lambda_1 - \lambda_3 C_{\nabla\CE} - \sigma_2^2\right) \!\CY(\rho_t) 
		+ \left(\lambda_1 + \lambda_3 C_{\nabla\CE} + \sigma_1^2 + \sigma_3^2C_{\nabla\CE}^2 \right) \!\CX(\rho_t)\\
		&\qquad\qquad\;\,\, + \sqrt{2}\left(\lambda_1 \sqrt{\CY(\rho_t)} + \sigma_1^2 \sqrt{\CX(\rho_t)}\right) \N{\conspoint{\rho_{Y,t}}-\globmin}_2 + \frac{\sigma_1^2}{2}\N{\conspoint{\rho_{Y,t}}-\globmin}_2^2,
	\end{aligned}
	\end{equation*}
	where we used \eqref{eq:proof:E_rho_t-globmin} together with an analogous bound for $\iint \N{x-y}_2 d\rho_t(x,y)$.
\end{proof}

Recalling that $\CV(\rho_t) = \CX(\rho_t) + \CY(\rho_t)$ immediately allows to obtain an evolution inequality for~$\CV(\rho_t)$ of the following form.

\begin{corollary} \label{cor:evolution_of_functionalV}
	Under the assumptions of Lemma~\ref{lem:evolution_of_functionalXY} the functional $\CV(\rho_t)$ satisfies
	\begin{equation} \label{eq:cor:evolution_of_functionalV}
	\begin{split}
		\frac{d}{dt} \CV(\rho_t)
		&\leq 
			-\chi_1 \CV(\rho_t)
			+ 2\sqrt{2}\left(\lambda_1 + \sigma_1^2\right) \sqrt{\CV(\rho_t)}\N{\conspoint{\rho_{Y,t}} - \globmin}_2
			+ \sigma_1^2\N{\conspoint{\rho_{Y,t}} - \globmin}_2^2,
	\end{split}
	\end{equation}
	with $\chi_1$ as specified in~\eqref{eq:chi_1}.
\end{corollary}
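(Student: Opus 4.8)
The plan is to deduce this directly from Lemma~\ref{lem:evolution_of_functionalXY} by using the identity $\CV(\rho_t) = \CX(\rho_t) + \CY(\rho_t)$ and simply adding the two scalar evolution inequalities provided there. No fixed-point argument or new estimate is required; the work is entirely bookkeeping of coefficients, so I would not expect any genuine obstacle.

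First I would add the two rows of the matrix differential inequality in Lemma~\ref{lem:evolution_of_functionalXY}. Summing the first column gives the coefficient multiplying $-\CX(\rho_t)$, which I expect to simplify to $\lambda_1 - \lambda_2 - 3\lambda_3 C_{\nabla\CE} - 2\sigma_1^2 - 2\sigma_3^2 C_{\nabla\CE}^2$, and summing the second column gives the coefficient multiplying $-\CY(\rho_t)$, which should come out to $2\kappa\theta + \lambda_2 - \lambda_1 - \lambda_3 C_{\nabla\CE} - 2\sigma_2^2$. These are precisely the two arguments of the minimum in the definition~\eqref{eq:chi_1} of $\chi_1$. Since $\CX(\rho_t)$ and $\CY(\rho_t)$ are both nonnegative (each is an integral of a squared norm), replacing each of these two coefficients by their minimum $\chi_1$ only increases the right-hand side, so the linear part is bounded above by $-\chi_1\CV(\rho_t)$.

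Next I would collect the terms carrying the factor $\N{\conspoint{\rho_{Y,t}} - \globmin}_2$: adding the two entries of the corresponding vector in Lemma~\ref{lem:evolution_of_functionalXY} yields $\sqrt{2}\big((\lambda_1 + 2\sigma_1^2)\sqrt{\CX(\rho_t)} + \lambda_1\sqrt{\CY(\rho_t)}\big)$, and bounding both $\sqrt{\CX(\rho_t)}$ and $\sqrt{\CY(\rho_t)}$ by $\sqrt{\CV(\rho_t)}$ (using $\CX(\rho_t),\CY(\rho_t)\leq\CV(\rho_t)$) collapses the coefficient to $2\sqrt{2}(\lambda_1 + \sigma_1^2)$. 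Finally, the two pure quadratic terms $\frac{\sigma_1^2}{2}\N{\conspoint{\rho_{Y,t}} - \globmin}_2^2$ coming from the two rows add to $\sigma_1^2\N{\conspoint{\rho_{Y,t}} - \globmin}_2^2$. Combining the three pieces produces exactly~\eqref{eq:cor:evolution_of_functionalV}.

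The only point demanding a little care is to carry out the coefficient arithmetic accurately and to observe that the step $-a\,\CX(\rho_t) - b\,\CY(\rho_t) \leq -\chi_1\CV(\rho_t)$, with $\chi_1 = \min\{a,b\}$, does \emph{not} require $a,b\geq 0$ — it holds simply because $\CX(\rho_t)$ and $\CY(\rho_t)$ are nonnegative. This is why the corollary can be stated without assuming $\chi_1 > 0$, even though that positivity is later imposed when the inequality is integrated in Theorem~\ref{thm:global_convergence_main}.
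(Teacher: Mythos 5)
Your proposal is correct and is exactly the paper's argument: the corollary is obtained by summing the two component inequalities of Lemma~\ref{lem:evolution_of_functionalXY}, and your coefficient bookkeeping (the two column sums giving the two arguments of the minimum in~\eqref{eq:chi_1}, the linear terms collapsing to $2\sqrt{2}(\lambda_1+\sigma_1^2)\sqrt{\CV(\rho_t)}$ via $\CX,\CY\leq\CV$, and the quadratic terms adding to $\sigma_1^2$) checks out. Your remark that the step $-a\,\CX-b\,\CY\leq-\min\{a,b\}\,\CV$ needs only the nonnegativity of $\CX$ and $\CY$, not of $a$ and $b$, is also accurate.
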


\revised{
Analogously to the upper bounds on the time evolutions of the functionals $\CX(\rho_t)$, $\CY(\rho_t)$ and $\CV(\rho_t)$, we can derive bounds from below as follows.

\begin{lemma} \label{cor:evolution_of_functionalXY_lower}
	Under the assumptions of Lemma~\ref{lem:evolution_of_functionalXY} the functionals $\CX(\rho_t)$ and $\CY(\rho_t)$ satisfy
	\begin{equation*}
	\begin{split}
		\frac{d}{dt}\!\begin{pmatrix}
			\CX(\rho_t)\\
			\CY(\rho_t)
		\end{pmatrix}
		\!\geq
		&-\!\begin{pmatrix}
			2\lambda_1 \!+\! \lambda_2 \!+\! 2\lambda_3 C_{\nabla\CE} \!-\! \sigma_1^2 \!+\! \sigma_3^2C_{\nabla\CE}^2 &\!\!\! \lambda_2\!-\!\sigma_2^2 \\
			\lambda_1 \!+\! \lambda_3 C_{\nabla\CE} \!-\! \sigma_1^2 \!+\! \sigma_3^2C_{\nabla\CE}^2 &\!\!\! 2\kappa\theta \!+\! 2\lambda_2 \!+\! \lambda_1 \!+\! \lambda_3 C_{\nabla\CE} - \sigma_2^2
		\end{pmatrix}\!
		\begin{pmatrix}
			\CX(\rho_t)\\
			\CY(\rho_t)
		\end{pmatrix}\\
		&-
		\sqrt{2}\begin{pmatrix}
			\left(\lambda_1\!+\!\sigma_1^2\right) \!\sqrt{\CX(\rho_t)} \\
			\lambda_1 \sqrt{\CY(\rho_t)} \!+\! \sigma_1^2 \sqrt{\CX(\rho_t)}
		\end{pmatrix}\!\N{\conspoint{\rho_{Y,t}}\!-\!\globmin}_2,
	\end{split}
	\end{equation*}
	where the inequality has to be understood component-wise.
\end{lemma}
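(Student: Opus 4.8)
The plan is to reuse the proof of Lemma~\ref{lem:evolution_of_functionalXY}, reversing the direction of every auxiliary estimate. Note that the identities~\eqref{eq:proof:evolution_of_objective_X_aux} and~\eqref{eq:proof:evolution_of_objective_Y_aux} derived there for the time derivatives of $\CX(\rho_t)$ and $\CY(\rho_t)$ hold as \emph{equalities}, obtained by applying the weak-solution identity~\eqref{eq:weak_solution_identity} with $\phi_\CX(x,y)=\tfrac12\N{x-\globmin}_2^2$ and $\phi_\CY(x,y)=\tfrac12\N{y-x}_2^2$. Starting from these identities, it remains to bound the integrands appearing in them from below rather than from above.

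Concretely, in place of each Cauchy--Schwarz estimate $-\langle a,b\rangle\le\N{a}_2\N{b}_2$ used in \eqref{eq:proof:auxiliary_estimates_a}--\eqref{eq:proof:auxiliary_estimates_d} one uses its mirror $-\langle a,b\rangle\ge-\N{a}_2\N{b}_2\ge-\tfrac12(\N{a}_2^2+\N{b}_2^2)$, expanding the scalar products and the norm $\N{x-\conspoint{\rho_{Y,t}}}_2^2$ by adding and subtracting $\globmin$ exactly as before. For the diffusion term this yields $\N{x-\conspoint{\rho_{Y,t}}}_2^2\ge\N{x-\globmin}_2^2-2\N{\conspoint{\rho_{Y,t}}-\globmin}_2\N{x-\globmin}_2$, the nonnegative square $\N{\conspoint{\rho_{Y,t}}-\globmin}_2^2$ being discarded; this explains why the lower bound, in contrast to Lemma~\ref{lem:evolution_of_functionalXY}, carries no $\N{\conspoint{\rho_{Y,t}}-\globmin}_2^2$ term and why the surviving linear-in-$\N{\conspoint{\rho_{Y,t}}-\globmin}_2$ contribution enters with an overall minus sign. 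For the gradient contributions one again invokes~\ref{asm:Lipschitz_gradient} to get $-\langle\nabla\CE(x),x-\globmin\rangle\ge-C_{\nabla\CE}\N{x-\globmin}_2^2$ and $-\langle\nabla\CE(x),x-y\rangle\ge-\tfrac{C_{\nabla\CE}}{2}(\N{x-y}_2^2+\N{x-\globmin}_2^2)$, while the gradient-diffusion term is simply estimated by $\sigma_3^2\N{\nabla\CE(x)}_2^2\ge0$ --- which, using $\CX(\rho_t),\CY(\rho_t)\ge0$, is loosened to match the $\sigma_3^2 C_{\nabla\CE}^2$ entries of the stated matrix --- so that no lower bound on $\N{\nabla\CE}_2$ is needed. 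For the $\kappa$-term in~\eqref{eq:proof:evolution_of_objective_Y_aux} one uses a uniform \emph{upper} bound on $S^{\beta,\theta}$, in place of the lower bound $S^{\beta,\theta}\ge\theta$ that was available for the upper estimate.

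Integrating these bounds against $\rho_t$, using $\iint\N{x-\globmin}_2^2\,d\rho_t=2\CX(\rho_t)$ and $\iint\N{y-x}_2^2\,d\rho_t=2\CY(\rho_t)$ together with Jensen's inequality $\iint\N{x-\globmin}_2\,d\rho_t\le\sqrt{2\CX(\rho_t)}$ and its analogue for $\iint\N{x-y}_2\,d\rho_t$, and collecting terms then gives the asserted component-wise matrix inequality. I do not expect a real obstacle: this is a sign-bookkeeping exercise, and the only point requiring attention is that the $\lambda_1$-, $\lambda_2$-, $\lambda_3$- and $\sigma_3^2$-related contributions flip sign relative to the upper estimate of Lemma~\ref{lem:evolution_of_functionalXY}, whereas the $\sigma_1^2$- and $\sigma_2^2$-related contributions --- arising from exact square-completion and an exact identity, respectively --- are left unchanged, and one must keep the $S^{\beta,\theta}$-term on the correct (upper-bounded) side.
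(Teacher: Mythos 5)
Your proposal coincides with the paper's proof: the paper likewise starts from the exact identities \eqref{eq:proof:evolution_of_objective_X_aux}--\eqref{eq:proof:evolution_of_objective_Y_aux} and simply records the mirrored versions of \eqref{eq:proof:auxiliary_estimates_a}--\eqref{eq:proof:auxiliary_estimates_g}, including discarding the nonnegative $\N{\conspoint{\rho_{Y,t}}-\globmin}_2^2$ in the lower analogue of \eqref{eq:proof:auxiliary_estimates_d} and using the trivial bound $\N{\nabla\CE(x)}_2^2\geq 0\geq -C_{\nabla\CE}^2\N{x-\globmin}_2^2$, exactly as you describe; your sign-bookkeeping of which coefficients flip is correct.

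One caveat concerns the $\kappa$-term, the only piece on which the paper's proof is silent. You correctly observe that a lower bound on $-\iint\kappa S^{\beta,\theta}(x,y)\N{x-y}_2^2\,d\rho_t$ requires a uniform \emph{upper} bound on $S^{\beta,\theta}$; but from \eqref{eq:S_beta} one has $\sup S^{\beta,\theta}=1+\theta/2$, so this route yields the $(2,2)$ entry $\kappa(2+\theta)+2\lambda_2+\lambda_1+\lambda_3C_{\nabla\CE}-\sigma_2^2$ rather than the stated $2\kappa\theta+2\lambda_2+\lambda_1+\lambda_3C_{\nabla\CE}-\sigma_2^2$ (the latter would require $S^{\beta,\theta}\leq\theta$, which fails for $\theta<2$). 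The $2\kappa\theta$ appears to be carried over from the upper-bound Lemma~\ref{lem:evolution_of_functionalXY}, so executing your plan faithfully proves the lemma only with $2\kappa\theta$ replaced by $\kappa(2+\theta)$ in that entry; this does not affect how the lower bound is used later (a larger decay constant $\chi_2$ only weakens the lower bound on $T$), but you should flag the discrepancy rather than assert the stated constant.
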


\begin{proof}
	By following the lines of the proof of Lemma~\ref{lem:evolution_of_functionalXY} and noticing that in analogy to the estimates~\eqref{eq:proof:auxiliary_estimates} it hold
	\begin{subequations} \label{eq:proof:auxiliary_estimates_lower}
	\begin{align}
		-\left\langle x-y, x-\globmin \right\rangle
		&\geq -\N{x-y}_2\N{x-\globmin}_2
		\geq -\frac{1}{2}\!\left(\N{x-y}_2^2+\N{x-\globmin}_2^2\right),
		\label{eq:proof:auxiliary_estimates_a_lower}\\%
		-\left\langle x-\conspoint{\rho_{Y,t}}, x-\globmin \right\rangle 
		&= -\N{x-\globmin}_2^2 - \left\langle \globmin-\conspoint{\rho_{Y,t}}, x-\globmin \right\rangle \nonumber\\
		&\geq -\N{x-\globmin}_2^2 - \N{\conspoint{\rho_{Y,t}}-\globmin}_2 \N{x-\globmin}_2, 
		\label{eq:proof:auxiliary_estimates_b_lower}\\%
		-\left\langle x-\conspoint{\rho_{Y,t}}, x-y \right\rangle 
		&= -\left\langle x-\globmin, x-y \right\rangle - \left\langle \globmin-\conspoint{\rho_{Y,t}}, x-y \right\rangle \nonumber\\
		&\geq -\frac{1}{2}\!\left(\N{x-y}_2^2+\N{x-\globmin}_2^2\right) - \N{\conspoint{\rho_{Y,t}}-\globmin}_2\N{x-y}_2, 
		\label{eq:proof:auxiliary_estimates_c_lower}\\%
		\N{x-\conspoint{\rho_{Y,t}}}_2^2
		&= \N{x-\globmin}_2^2 - 2\left\langle\conspoint{\rho_{Y,t}}-\globmin,x-\globmin\right\rangle + \N{\conspoint{\rho_{Y,t}}-\globmin}_2^2 \nonumber\\
		&\geq \N{x-\globmin}_2^2 - 2\N{\conspoint{\rho_{Y,t}}-\globmin}_2\N{x-\globmin}_2,
		\label{eq:proof:auxiliary_estimates_d_lower}\\%
		\intertext{as well as}
		-\left\langle \nabla\CE(x), x-\globmin \right\rangle
		&\geq -\N{\nabla\CE(x)}_2\N{x-\globmin}_2
		\geq -C_{\nabla\CE}\N{x-\globmin}_2^2,
		\label{eq:proof:auxiliary_estimates_e_lower}\\%
		-\left\langle \nabla\CE(x), x-y \right\rangle
		&\geq -\N{\nabla\CE(x)}_2\N{x-y}_2
		\geq -C_{\nabla\CE}\N{x-\globmin}_2\N{x-y}_2 \nonumber \\
		&\geq -\frac{C_{\nabla\CE}}{2}\left(\N{x-y}_2^2+\N{x-\globmin}_2^2\right),
		\label{eq:proof:auxiliary_estimates_f_lower}\\%
		\N{\nabla\CE(x)}_2^2
		&\geq -C_{\nabla\CE}^2\N{x-\globmin}_2^2
		\label{eq:proof:auxiliary_estimates_g_lower}.
	\end{align}
	\end{subequations}
	we obtain the statement by integrating the bounds into Equations~\eqref{eq:proof:evolution_of_objective_X_aux} and~\eqref{eq:proof:evolution_of_objective_Y_aux}.	
\end{proof}

\begin{corollary} \label{cor:evolution_of_functionalV_lower}
	Under the assumptions of Lemma~\ref{lem:evolution_of_functionalXY} the functional $\CV(\rho_t)$ satisfies
	\begin{equation} \label{eq:cor:evolution_of_functionalV_lower}
	\begin{split}
		\frac{d}{dt} \CV(\rho_t)
		&\geq 
			-\chi_2 \CV(\rho_t)
			- 2\sqrt{2}\left(\lambda_1 + \sigma_1^2\right) \sqrt{\CV(\rho_t)}\N{\conspoint{\rho_{Y,t}} - \globmin}_2,
	\end{split}
	\end{equation}
	with $\chi_2$ as specified in~\eqref{eq:chi_2}.
\end{corollary}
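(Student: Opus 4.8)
The plan is to mirror the passage from Lemma~\ref{lem:evolution_of_functionalXY} to Corollary~\ref{cor:evolution_of_functionalV}, now starting instead from the lower bounds collected in Lemma~\ref{cor:evolution_of_functionalXY_lower}. Since $\CV(\rho_t) = \CX(\rho_t) + \CY(\rho_t)$, I would simply add the two component-wise inequalities of Lemma~\ref{cor:evolution_of_functionalXY_lower}. Summing the two rows of the matrix acting on $(\CX(\rho_t),\CY(\rho_t))^\top$ produces, as coefficient of $\CX(\rho_t)$, the quantity $3\lambda_1 + \lambda_2 + 3\lambda_3 C_{\nabla\CE} - 2\sigma_1^2 + 2\sigma_3^2 C_{\nabla\CE}^2$, and, as coefficient of $\CY(\rho_t)$, the quantity $2\kappa\theta + 3\lambda_2 + \lambda_1 + \lambda_3 C_{\nabla\CE} - 2\sigma_2^2$\,---\,precisely the two entries of the maximum defining $\chi_2$ in~\eqref{eq:chi_2}. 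Likewise, summing the consensus-point error terms yields the prefactor $(\lambda_1 + 2\sigma_1^2)\sqrt{\CX(\rho_t)} + \lambda_1\sqrt{\CY(\rho_t)}$, up to the common $\sqrt{2}$ and the common factor $\N{\conspoint{\rho_{Y,t}} - \globmin}_2$.

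Next I would exploit that $\CX(\rho_t),\CY(\rho_t) \geq 0$ to replace both diagonal coefficients by their maximum $\chi_2$, which is licit because $(\chi_2 - a)\CX(\rho_t) + (\chi_2 - b)\CY(\rho_t)\geq 0$ whenever $\chi_2 = \max\{a,b\}$; this delivers the leading term $-\chi_2\CV(\rho_t)$. For the remaining (negative) contribution I would use the elementary bounds $\sqrt{\CX(\rho_t)} \leq \sqrt{\CV(\rho_t)}$ and $\sqrt{\CY(\rho_t)} \leq \sqrt{\CV(\rho_t)}$, both valid since $\CX(\rho_t),\CY(\rho_t) \leq \CV(\rho_t)$, so that the prefactor is controlled by $(\lambda_1 + 2\sigma_1^2) + \lambda_1 = 2(\lambda_1 + \sigma_1^2)$. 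Together this gives $\frac{d}{dt}\CV(\rho_t) \geq -\chi_2\CV(\rho_t) - 2\sqrt{2}(\lambda_1 + \sigma_1^2)\sqrt{\CV(\rho_t)}\N{\conspoint{\rho_{Y,t}} - \globmin}_2$, which is exactly~\eqref{eq:cor:evolution_of_functionalV_lower}.

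I do not anticipate any genuine obstacle: the statement is an immediate bookkeeping consequence of Lemma~\ref{cor:evolution_of_functionalXY_lower} together with the nonnegativity and sub-additivity of $\CX$ and $\CY$, in complete parallel with how Corollary~\ref{cor:evolution_of_functionalV} follows from Lemma~\ref{lem:evolution_of_functionalXY}. The only point requiring a moment's care is that, since one is after a lower bound on $\frac{d}{dt}\CV(\rho_t)$, the estimates $\sqrt{\CX(\rho_t)},\sqrt{\CY(\rho_t)}\leq\sqrt{\CV(\rho_t)}$ must be applied to the term carrying the overall minus sign, so that relaxing it enlarges rather than shrinks the right-hand side; this is the reason the consensus-point error term enters~\eqref{eq:cor:evolution_of_functionalV_lower} with a negative sign and with the same prefactor $2\sqrt{2}(\lambda_1+\sigma_1^2)$ as in the upper bound of Corollary~\ref{cor:evolution_of_functionalV}.
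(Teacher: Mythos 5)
Your proposal is correct and is exactly the argument the paper intends: the corollary is obtained by adding the two component-wise inequalities of Lemma~\ref{cor:evolution_of_functionalXY_lower}, bounding the diagonal coefficients by their maximum $\chi_2$ using $\CX(\rho_t),\CY(\rho_t)\geq 0$, and controlling the consensus-point prefactor via $\sqrt{\CX(\rho_t)},\sqrt{\CY(\rho_t)}\leq\sqrt{\CV(\rho_t)}$, in complete parallel to how Corollary~\ref{cor:evolution_of_functionalV} follows from Lemma~\ref{lem:evolution_of_functionalXY}. The arithmetic checks out, including the column sums matching the two entries in~\eqref{eq:chi_2} and the prefactor $(\lambda_1+2\sigma_1^2)+\lambda_1=2(\lambda_1+\sigma_1^2)$.
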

}

In order to be able to apply Gr\"onwall's inequality to~\eqref{eq:cor:evolution_of_functionalV} \revised{and \eqref{eq:cor:evolution_of_functionalV_lower}} with the aim of obtaining \revised{estimates} of the form~$\CV(\rho_t) \leq \CV(\rho_0)e^{-(1-\vartheta)\chi_1 t}$ \revised{and~$\CV(\rho_t) \geq \CV(\rho_0)e^{-(1-\vartheta/2)\chi_2 t}$ for some~$\chi_1,\chi_2>0$ and a suitable $\vartheta\in(0,1)$,} it remains to control the quantity $\N{\conspoint{\rho_{Y,t}} - \globmin}_2$ through the choice of the parameter~$\alpha$.
This is the content of the next section.

\subsection{Quantitative Laplace Principle} \label{subsec:quant_laplace}

The well-known Laplace principle~\cite{dembo2009large,miller2006applied,pinnau2017consensus} asserts that for any absolutely continuous probability distribution~$\indivmeasure\in\CP(\bbR^d)$ with $\globmin\in\supp(\indivmeasure)$ it holds 
\begin{align} \label{eq:laplace_principle}
	\lim\limits_{\alpha\rightarrow \infty}\left(-\frac{1}{\alpha}\log\N{\omegaa}_{L_1(\indivmeasure)}\right)
	= \CE(\globmin) = \underbar\CE,
\end{align}
which allows to infer that the $\alpha$-weighted measure~$\omegaa/\N{\omegaa}_{L_1(\indivmeasure)}\indivmeasure$ is concentrated in a small region around the minimizer~$\globmin$, provided that $\CE$ attains its minimum at a single point, which is however guaranteed by the inverse continuity property~\ref{asm:icp}.

The asymptotic nature of the result~\eqref{eq:laplace_principle}, however, does not permit to obtain the required quantitative estimates, which is the reason why the authors of~\cite{fornasier2021consensus} proposed a quantitative nonasymptotic variant of the Laplace principle.
In the following proposition, cf.\@~\cite[Proposition~1]{fornasier2021convergence}, we state this result for the setting of anisotropic noise considered throughout the paper.

\begin{proposition}[{\cite[Proposition~1]{fornasier2021convergence}}] \label{lem:laplace_quant}
	Let $\minobj = 0$, $\indivmeasure \in \CP(\bbR^d)$ and fix $\alpha  > 0$. For any $r > 0$ we define $\CE_{r} := \sup_{y \in B^\infty_{r}(\globmin)}\CE(y)$.
	Then, under the inverse continuity property~\ref{asm:icp}, for any $r \in (0,R_0]$ and $q > 0$  such that $q + \CE_{r} \leq \CE_{\infty}$, we have
	\begin{align*}
		\N{\conspoint{\indivmeasure} - \globmin}_2
		\leq \frac{\sqrt{d}(q + \CE_{r})^\nu}{\eta} + \frac{\sqrt{d}\exp(-\alpha q)}{\indivmeasure(B^\infty_{r}(\globmin))}\int\N{y-\globmin}_2d\indivmeasure(y).
	\end{align*}
\end{proposition}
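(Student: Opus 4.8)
The plan is to exploit the explicit form of the consensus point and reduce the estimate to a splitting of the defining integral into a region close to $\globmin$, where the inverse continuity property~\ref{asm:icp} forces points with small objective value to be genuinely close, and a far region, where the Gibbs weight $\omegaa$ is exponentially small. Writing $\conspoint{\indivmeasure} - \globmin = \int (y-\globmin)\,\omegaa(y)/\N{\omegaa}_{L_1(\indivmeasure)}\,d\indivmeasure(y)$ and using that $\omegaa/\N{\omegaa}_{L_1(\indivmeasure)}\,d\indivmeasure$ is a probability measure, Jensen's inequality together with $\N{z}_2 \le \sqrt{d}\,\N{z}_\infty$ gives
\[
	\N{\conspoint{\indivmeasure} - \globmin}_2 \le \sqrt{d}\int \N{y-\globmin}_\infty \frac{\omegaa(y)}{\N{\omegaa}_{L_1(\indivmeasure)}}\,d\indivmeasure(y).
\]
Abbreviating $\widetilde r := (q+\CE_{r})^\nu/\eta$ (note that $\CE_{r} \ge \CE(\globmin) = \minobj = 0$, so $\widetilde r$ is well defined and nonnegative), I would split this integral over $B^\infty_{\widetilde r}(\globmin)$ and its complement. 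On $B^\infty_{\widetilde r}(\globmin)$ the integrand $\N{y-\globmin}_\infty$ is at most $\widetilde r$, and since the weighted measure has total mass at most one, the near part contributes at most $\sqrt{d}\,\widetilde r = \sqrt{d}(q+\CE_{r})^\nu/\eta$, which is exactly the first term of the claimed bound.

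For the far part, the key step is the set inclusion
\[
	\{y \in \bbR^d : \CE(y) < q + \CE_{r}\} \subseteq B^\infty_{\widetilde r}(\globmin),
\]
equivalently $\CE(y) \ge q + \CE_{r}$ for every $y \in (B^\infty_{\widetilde r}(\globmin))^c$. I would prove this by a case distinction based on~\ref{asm:icp}: if $y \notin B^\infty_{R_0}(\globmin)$, then~\eqref{eq:asm_icp_farfield} together with the hypothesis $q + \CE_{r} \le \CE_{\infty}$ yields $\CE(y) > \CE_{\infty} \ge q + \CE_{r}$; and if $y \in B^\infty_{R_0}(\globmin)$ with $\CE(y) < q + \CE_{r}$, then~\eqref{eq:asm_icp_vstar} and strict monotonicity of $t \mapsto t^\nu$ on $[0,\infty)$ give $\N{y-\globmin}_\infty \le \CE(y)^\nu/\eta < (q+\CE_{r})^\nu/\eta = \widetilde r$, i.e.\ $y \in B^\infty_{\widetilde r}(\globmin)$. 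Consequently $\omegaa(y) = \exp(-\alpha\CE(y)) \le \exp(-\alpha(q+\CE_{r}))$ on $(B^\infty_{\widetilde r}(\globmin))^c$. Next I would lower-bound the normalization by restricting the integral to $B^\infty_{r}(\globmin)$, where $\CE(y) \le \CE_{r}$, obtaining $\N{\omegaa}_{L_1(\indivmeasure)} \ge \exp(-\alpha\CE_{r})\,\indivmeasure(B^\infty_{r}(\globmin))$. Combining these two estimates, bounding $\N{y-\globmin}_\infty \le \N{y-\globmin}_2$ and enlarging the domain of integration to all of $\bbR^d$, the far part is at most $\sqrt{d}\exp(-\alpha q)\,\indivmeasure(B^\infty_{r}(\globmin))^{-1}\int \N{y-\globmin}_2\,d\indivmeasure(y)$, which is the second term. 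Adding the two contributions completes the proof.

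The only genuinely delicate point is the set inclusion above together with the bookkeeping around it: one must observe that $\widetilde r$ may well exceed $R_0$ (in which case the inclusion still holds, since the farfield branch already confines $\{\CE < q + \CE_{r}\}$ inside $B^\infty_{R_0}(\globmin) \subseteq B^\infty_{\widetilde r}(\globmin)$), and one must invoke the hypothesis $q + \CE_{r} \le \CE_{\infty}$ at precisely the place where the farfield estimate~\eqref{eq:asm_icp_farfield} is used. Everything else reduces to Jensen's inequality, monotonicity of $t \mapsto t^\nu$, and elementary manipulations of the exponential weight, so I do not anticipate further obstacles.
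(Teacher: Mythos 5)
Your proposal is correct and follows essentially the same route as the paper's proof: the same Jensen-type decomposition over $B^\infty_{\widetilde r}(\globmin)$ with $\widetilde r=(q+\CE_r)^\nu/\eta$, the same lower bound $\N{\omegaa}_{L_1(\indivmeasure)}\geq\exp(-\alpha\CE_r)\indivmeasure(B^\infty_r(\globmin))$, and the same two-case use of \ref{asm:icp} to force $\CE\geq q+\CE_r$ outside $B^\infty_{\widetilde r}(\globmin)$ (which the paper phrases as a bound on $\inf_{(B^\infty_{\widetilde r}(\globmin))^c}\CE$ rather than as a set inclusion). The only cosmetic differences are that you convert to the Euclidean norm at the outset instead of at the end and obtain the normalization bound by restricting the integral rather than via Markov's inequality.
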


\begin{proof}
	The proof is a mere reformulation of the one of~\cite[Propositon~1]{fornasier2021convergence}, which is presented in what follows for the sake of completeness.
	
	For any $a > 0$, Markov's inequality gives $\Nnormal{\omegaa}_{L_1(\indivmeasure)} \geq a \indivmeasure(\{y : \exp(-\alpha \CE(y)) \geq a\})$.
	By choosing $a = \exp(-\alpha\CE_{r})$ and noting that
	\begin{align*}
		\indivmeasure\left(\left\{y \in \bbR^d: \exp(-\alpha \CE(y)) \geq \exp(-\alpha\CE_{r})\right\}\right) &=
		\indivmeasure\left(\left\{y \in \bbR^d: \CE(y) \leq \CE_{r} \right\}\right) \geq \indivmeasure(B^\infty_{r}(\globmin)),
	\end{align*}
	we get $\N{\omegaa}_{L_1(\indivmeasure)} \geq \exp(-\alpha \CE_{r})\indivmeasure(B^\infty_{r}(\globmin))$.
	Now let $\widetilde r \geq r > 0$.
	With the definition of the consensus point $\conspoint{\indivmeasure} = \int y \omegaa(y)/\Nnormal{\omegaa}_{L_1(\indivmeasure)}\,d\indivmeasure(y)$ and Jensen's inequality we can decompose
	\begin{align*}
	\begin{split}
		\N{\conspoint{\indivmeasure} - \globmin}_\infty
		&\leq \int_{B^\infty_{\widetilde r}(\globmin)} \N{y-\globmin}_\infty \frac{\omegaa(y)}{\N{\omegaa}_{L_1(\indivmeasure)}}\,d\indivmeasure(y) \\
		&\quad\, + \int_{\left(B^\infty_{\widetilde r}(\globmin)\right)^c} \N{y-\globmin}_\infty \frac{\omegaa(y)}{\N{\omegaa}_{L_1(\indivmeasure)}}\,d\indivmeasure(y).
	\end{split}
	\end{align*}
	After noticing that the first term is bounded by $\widetilde r$ since $ \N{y-\globmin}_\infty \leq \widetilde r$ for all $y \in B^\infty_{\widetilde r}(\globmin)$ we can continue the former with
	\begin{equation} \label{eq:aux_laplace_1}
	\begin{split}
		\N{\conspoint{\indivmeasure} - \globmin}_\infty
		&\leq \widetilde r + \frac{1}{ \exp(-\alpha \CE_{r})\indivmeasure(B^\infty_{r}(\globmin))}\!\int_{(B^\infty_{\widetilde r}(\globmin))^c} \N{y-\globmin}_\infty \omegaa(y)\,d\indivmeasure(y)\\
		&\leq \widetilde r + \frac{\exp\left(-\alpha \inf_{y \in (B^\infty_{\widetilde r}(\globmin))^c}\CE(y)\right)}{\exp\left(-\alpha \CE_{r})\indivmeasure(B^\infty_{r}(\globmin)\right)}\!\int_{(B^\infty_{\widetilde r}(\globmin))^c}\N{y-\globmin}_\infty d\indivmeasure(y)\\
		&= \widetilde r + \frac{\exp\left(-\alpha \left(\inf_{y \in (B^\infty_{\widetilde r}(\globmin))^c}\CE(y) - \CE_{r}\right)\right)}{\indivmeasure(B^\infty_{r}(\globmin))}\!\int\N{y-\globmin}_\infty d\indivmeasure(y),
	\end{split}
	\end{equation}
	where for the second term we used $\N{\omegaa}_{L_1(\indivmeasure)} \geq \exp(-\alpha \CE_{r})\indivmeasure(B^\infty_{r}(\globmin))$ from above.
	Let us now choose $\widetilde r = (q+\CE_r)^{\nu}/\eta$, which satisfies $\widetilde r  \geq r$, since \ref{asm:icp} with $\minobj = 0$ and $r \leq R_0$ implies
	\begin{align*}
		\widetilde r
		= \frac{(q+\CE_r)^{\nu}}{\eta} \geq \frac{\CE_r^{\nu}}{\eta}
		= \frac{\left(\sup_{y \in B^\infty_{r}(\globmin)}\CE(y)\right)^{\nu}}{\eta} \geq \sup_{y \in B_{r}^\infty(\globmin)}\N{y-\globmin}_\infty
		= r.
	\end{align*}
	Furthermore, due to the assumption $q+\CE_r \leq \CE_{\infty}$ in the statement we have $\widetilde r \leq \CE_{\infty}^{\nu}/\eta$, which together with the two cases of \ref{asm:icp} with $\minobj = 0$ allows to bound the infimum in \eqref{eq:aux_laplace_1} as follows 
	\begin{align*}
		\inf_{y \in (B^\infty_{\widetilde r}(\globmin))^c}\CE(y) - \CE_r
		\geq \min\left\{\CE_{\infty}, (\eta \widetilde r)^{\frac{1}{\nu}}\right\} - \CE_r
		= (\eta \widetilde r)^{\frac{1}{\nu}} - \CE_r
		= (q + \CE_r) - \CE_r
		= q.
	\end{align*}
	Inserting this and the definition of $\widetilde r$ into \eqref{eq:aux_laplace_1}, we get the result as a consequence of the norm equivalence $\N{\,\cdot\,}_\infty\leq\N{\,\cdot\,}_2\leq\sqrt{d}\N{\,\cdot\,}_\infty$.
\end{proof}

To eventually apply Proposition~\ref{lem:laplace_quant} in the setting of Corollary~\ref{cor:evolution_of_functionalV}, i.e., to upper bound the distance of the consensus point~$\conspoint{\rho_{Y,t}}$ to the global minimizer~$\globmin$, it remains to ensure that $\rho_{Y,t}(B^\infty_{r}(\globmin))$ is bounded away from $0$ for a finite time horizon.
We ensure that this is indeed the case in what follows.

\subsection{A Lower Bound for the Probability Mass~$\rho_{Y,t}(B^\infty_{r}(\globmin))$} \label{subsec:lower_bound_prob_mass}

In this section, for any small radius $r > 0$, we provide a lower bound on the probability mass of $\rho_{Y,t}(B^\infty_{r}(\globmin))$ by defining a mollifier $\phi_r : \bbR^d\times\bbR^d \rightarrow \bbR$ so that
\begin{equation*}
	\rho_{Y,t}(B^\infty_{r}(\globmin))
	= \rho_{t}(\bbR^d\times B^\infty_{r}(\globmin))
	= \iint_{\bbR^d\times B^\infty_{r}(\globmin)} 1 \, d\rho_{t}(x,y)
	\geq \iint \phi_r(x,y) \, d\rho_t(x,y)
\end{equation*}
and studying the evolution of the right-hand side.

\begin{lemma} \label{lem:properties_mollifier}
	For $r > 0$ let $\Omega_r := \{(x,y)\in\bbR^d\times\bbR^d: \max\{\N{x-\globmin}_\infty, \N{x-y}_\infty\}< r/2\}$ and define the mollifier $\phi_r : \bbR^d\times\bbR^d \rightarrow \bbR$ by
	\begin{equation*} 
		\phi_{r}(x,y) := \begin{cases}
			\prod_{k=1}^d \exp\left(1-\frac{\left(\frac{r}{2}\right)^2}{\left(\frac{r}{2}\right)^2-\left(x-\globmin\right)_k^2}\right)\exp\left(1-\frac{\left(\frac{r}{2}\right)^2}{\left(\frac{r}{2}\right)^2-\left(x-y\right)_k^2}\right),& \text{ if } (x,y)\in\Omega_r,\\
			0,& \text{ else.}
		\end{cases}
	\end{equation*}
	We have that $\Im(\phi_r) = [0,1]$, $\supp(\phi_r) = \Omega_r \subset B^\infty_{r/2}(\globmin) \times B^\infty_{r}(\globmin) \subset \bbR^d \times B^\infty_{r}(\globmin)$, $\phi_{r} \in \CC_c^{\infty}(\bbR^d\times\bbR^d)$ and
	\begin{align*}
		\partial_{x_k} \phi_{r}(x,y)
			&= -\frac{r^2}{2}
				\left(
				\frac{\left(x-\globmin\right)_k}{\left(\left(\frac{r}{2}\right)^2-\left(x-\globmin\right)^2_k\right)^2}
				+\frac{\left(x-y\right)_k}{\left(\left(\frac{r}{2}\right)^2-\left(x-y\right)^2_k\right)^2}
				\right)
				\phi_{r}(x,y),\\
		\partial_{y_k} \phi_{r}(x,y)
			&= -\frac{r^2}{2}
				\frac{\left(y-x\right)_k}{\left(\left(\frac{r}{2}\right)^2-\left(x-y\right)^2_k\right)^2}
				\phi_{r}(x,y),\\
		\partial^2_{x_kx_k} \phi_{r}(x,y)
			&= \frac{r^2}{2}
				\left(
				\left(\frac{2\left(2\left(x-\globmin\right)^2_k-\left(\frac{r}{2}\right)^2\right)\left(x-\globmin\right)_k^2 - \left(\left(\frac{r}{2}\right)^2-\left(x-\globmin\right)^2_k\right)^2}{\left(\left(\frac{r}{2}\right)^2-\left(x-\globmin\right)^2_k\right)^4}\right)\right.\\
				&\qquad+\left.\left(\frac{2\left(2\left(x-y\right)^2_k-\left(\frac{r}{2}\right)^2\right)\left(x-y\right)_k^2 - \left(\left(\frac{r}{2}\right)^2-\left(x-y\right)^2_k\right)^2}{\left(\left(\frac{r}{2}\right)^2-\left(x-y\right)^2_k\right)^4}\right)
				\right)
				\phi_{r}(x,y).
	\end{align*}
\end{lemma}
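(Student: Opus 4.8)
The plan is to view $\phi_r$ as a tensor product of one-dimensional standard bump functions composed with affine coordinate maps, so that every assertion reduces to elementary properties of the single-variable function
\[
	g(s) := \begin{cases} \exp\!\big(1-\tfrac{a^2}{a^2-s^2}\big), & |s| < a,\\[0.3ex] 0, & |s|\ge a, \end{cases}
	\qquad a := r/2 .
\]
Writing $s^{(1)}_k := (x-\globmin)_k$ and $s^{(2)}_k := (x-y)_k$, which are affine in $(x,y)$, one has $\phi_r(x,y) = \prod_{k=1}^d g\big(s^{(1)}_k\big)\,g\big(s^{(2)}_k\big)$. First I would record the elementary facts about $g$: it is continuous, $g(0)=1$, $0 < g(s)\le 1$ on $(-a,a)$, and $g(s)\to 0$ as $|s|\to a^-$.

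From the product representation, $\Im(\phi_r)\subseteq[0,1]$ is immediate, and equality holds since $\phi_r(\globmin,\globmin)=1$, $\phi_r\equiv 0$ off $\Omega_r$, and $\phi_r$ is continuous. Moreover $\phi_r(x,y)>0$ exactly when every factor is positive, i.e.\ $\max_k|s^{(1)}_k|<a$ and $\max_k|s^{(2)}_k|<a$, which is precisely $(x,y)\in\Omega_r$; hence $\phi_r$ is positive on $\Omega_r$ and vanishes outside it, which gives the support statement. The inclusions $\Omega_r \subset B^\infty_{r/2}(\globmin)\times B^\infty_{r}(\globmin) \subset \bbR^d\times B^\infty_{r}(\globmin)$ follow from the triangle inequality: on $\Omega_r$ one has $\N{x-\globmin}_\infty < r/2$, so $x\in B^\infty_{r/2}(\globmin)$, and $\N{y-\globmin}_\infty \le \N{y-x}_\infty + \N{x-\globmin}_\infty < r$, so $y\in B^\infty_{r}(\globmin)$.

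For the regularity claim $\phi_r\in\CC_c^\infty(\bbR^d\times\bbR^d)$, the one genuinely non-mechanical ingredient is the classical fact that $g\in\CC^\infty(\bbR)$ with $g^{(m)}(\pm a)=0$ for every $m\ge0$: on $(-a,a)$ each derivative of $g$ is a rational function of $(a^2-s^2)^{-1}$ times $g(s)$, and the factor $\exp\!\big(-a^2/(a^2-s^2)\big)$ forces it to $0$ as $|s|\to a^-$, which yields the smooth matching across $\{|s|=a\}$. Granting this, $\phi_r$ is a finite product of compositions of $g$ with affine maps, hence $\CC^\infty(\bbR^d\times\bbR^d)$; since $\Omega_r$ (and thus $\{\phi_r\neq0\}$) is bounded, the support is compact, so $\phi_r\in\CC_c^\infty(\bbR^d\times\bbR^d)$.

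The derivative identities are then a direct, if slightly tedious, computation, which I would organise via logarithmic differentiation on the open set $\Omega_r$ (outside $\overline{\Omega_r}$ all derivatives vanish, and on $\partial\Omega_r$ one invokes the continuity of the derivatives just established). On $\Omega_r$, with $h(s):=1-\tfrac{a^2}{a^2-s^2}$ one has $\log\phi_r = \sum_{k=1}^d\big(h(s^{(1)}_k)+h(s^{(2)}_k)\big)$ and $h'(s) = -\tfrac{r^2}{2}\,\tfrac{s}{(a^2-s^2)^2}$, so that
\[
	\frac{\partial_{x_k}\phi_r}{\phi_r} = h'(s^{(1)}_k) + h'(s^{(2)}_k), \qquad \frac{\partial_{y_k}\phi_r}{\phi_r} = -\,h'(s^{(2)}_k),
\]
which give the claimed first-order expressions after substituting $h'$. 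Differentiating $\partial_{x_k}\phi_r = \big(h'(s^{(1)}_k)+h'(s^{(2)}_k)\big)\phi_r$ once more in $x_k$ gives $\big(h''(s^{(1)}_k)+h''(s^{(2)}_k)\big)\phi_r + \big(h'(s^{(1)}_k)+h'(s^{(2)}_k)\big)^2\phi_r$; using $h''(s) = -\tfrac{r^2}{2}\,\tfrac{a^2+3s^2}{(a^2-s^2)^3}$ together with the simplification $h''(s)+h'(s)^2 = \tfrac{r^2}{2}\,\tfrac{3s^4-a^4}{(a^2-s^2)^4}$, where $3s^4-a^4 = 2(2s^2-a^2)s^2-(a^2-s^2)^2$ and $a=r/2$, for each of the two pure terms produces the stated second-order formula. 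I expect no real obstacle here: the whole lemma is bookkeeping, and the only step that is not a pure computation is the standard smooth extension of $g$ across $\pm a$.
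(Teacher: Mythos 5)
Your overall strategy coincides with the paper's (the paper's proof is a one-line appeal to the fact that $\phi_r$ is a tensor product of classical mollifiers), and your verifications of the image, the support, the inclusions, the smoothness of the one-dimensional bump $g$ across $\pm a$, and the two first-order derivative identities are all correct.

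There is, however, a genuine gap in your treatment of the second derivative. Both arguments $s^{(1)}_k=(x-\globmin)_k$ and $s^{(2)}_k=(x-y)_k$ depend on $x_k$, so your (correct) expansion
\begin{equation*}
	\partial^2_{x_kx_k}\phi_r=\Big(h''(s^{(1)}_k)+h''(s^{(2)}_k)\Big)\phi_r+\Big(h'(s^{(1)}_k)+h'(s^{(2)}_k)\Big)^2\phi_r
\end{equation*}
contains, after expanding the square, not only the two pure terms $\big(h''(s^{(i)}_k)+h'(s^{(i)}_k)^2\big)\phi_r$ but also the cross term $2\,h'(s^{(1)}_k)\,h'(s^{(2)}_k)\,\phi_r = \frac{r^4}{2}\,\frac{(x-\globmin)_k(x-y)_k}{\big((\frac{r}{2})^2-(x-\globmin)_k^2\big)^2\big((\frac{r}{2})^2-(x-y)_k^2\big)^2}\,\phi_r$. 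Your simplification $h''(s)+h'(s)^2=\frac{r^2}{2}\frac{3s^4-a^4}{(a^2-s^2)^4}$ is correct and reproduces exactly the two summands displayed in the lemma, but the cross term does not vanish in general (take any point of $\Omega_r$ where both $(x-\globmin)_k$ and $(x-y)_k$ are nonzero), so the assertion that your computation ``produces the stated second-order formula'' is not justified: it produces the stated formula \emph{plus} the cross term. To close the argument you must either carry the cross term along (in which case the identity for $\partial^2_{x_kx_k}\phi_r$ as printed in the lemma is incomplete and would need to be corrected, with consequences for the decomposition $\partial^2_{x_kx_k}\phi_r=\delta^{2,*}_{x_kx_k}\phi_r+\delta^{2,Y}_{x_kx_k}\phi_r$ used later) or give a reason why it is absent; silently dropping it is not an option.
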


\begin{proof}
	It is straightforward to check the properties of $\phi_r$ as it is a tensor product of classical well-studied mollifiers.
\end{proof}

\noindent
To keep the notation as concise as possible in what follows, let us introduce the decomposition
\begin{equation} \label{eq:derivatives_abbreviations}
	\partial_{x_k} \phi_{r} = \delta^{*}_{x_k} \phi_{r} + \delta^Y_{x_k} \phi_{r}
	\quad\text{and}\quad
	\partial^2_{x_kx_k} \phi_{r} = \delta^{2,*}_{x_kx_k} \phi_{r} + \delta^{2,Y}_{x_kx_k} \phi_{r},
\end{equation}
where
\begin{equation*}
	\delta^{*}_{x_k} \phi_{r}(x,y) \!=\! 
			\frac{-\frac{r^2}{2}\left(x\!-\!\globmin\right)_k}{\left(\!\left(\frac{r}{2}\right)^2\!\!-\!\left(x\!-\!\globmin\right)^2_k\right)^{\!2}}
			\phi_{r}(x,y)
	\ \ \text{and} \ \
	\delta^Y_{x_k} \phi_{r}(x,y) \!=\! 
			\frac{-\frac{r^2}{2}\left(x\!-\!y\right)_k}{\left(\!\left(\frac{r}{2}\right)^2\!\!-\!\left(x\!-\!y\right)^2_k\right)^{\!2}}
			\phi_{r}(x,y)
\end{equation*}
and analogously for $\delta^{2,*}_{x_kx_k} \phi_{r}$ and $\delta^{2,Y}_{x_kx_k} \phi_{r}$.

\begin{proposition} \label{lem:lower_bound_probability_memoryCBO}
	Let $T > 0$, $r > 0$, and fix parameters $\alpha,\lambda_1,\sigma_1 > 0$ as well as parameters $\lambda_2,\sigma_2,\lambda_3,\sigma_3,\beta,\kappa,\theta \geq 0$
	such that $\sigma_2>0$ iff $\lambda_2\not=0$ and $\sigma_3>0$ iff $\lambda_3\not=0$.
	Moreover assume the validity of Assumption~\ref{asm:Lipschitz_gradient} if $\lambda_3\not=0$.
	Let $\rho\in\CC([0,T],\CP(\bbR^d\times\bbR^d))$ weakly solve the Fokker-Planck equation~\eqref{eq:CBO_with_memory_weak} in the sense of Definition~\ref{def:fokker_planck_weak_sense} with initial condition $\rho_0 \in \CP(\bbR^d\times\bbR^d)$ and for $t \in [0,T]$.
	Then, for all $t\in[0,T]$ we have
	\begin{align} \label{eq:lower_bound_probability_rate}
		\rho_{Y,t}(B^\infty_{r}(\globmin))
		&\geq \left(\iint\phi_{r}(x,y)\,d\rho_0(x,y)\right)\exp\left(-pt\right)
	\end{align}
	with
	\begin{align} \label{eq:def_p_memory}
		p &:= d\sum_{i=1}^3\omega_i\left(\left(1\!+\!\mathbbm{1}_{i\in\{1,3\}}\right)\left(\frac{2\lambda_i C_\Upsilon\sqrt{c}}{(1\!-\!c)^2\frac{r}{2}} \!+\! \frac{\sigma_i^2C_\Upsilon^2}{(1\!-\!c)^4\!\left(\frac{r}{2}\right)^2} \!+\! \frac{4\lambda_i^2}{\tilde{c}\sigma_i^2}\right) \!+\! \mathbbm{1}_{i=2}\frac{\sigma_2^2c}{(1\!-\!c)^4} \right),
	\end{align}
	where, for any $B<\infty$ with $\sup_{t \in [0,T]}\N{\conspoint{\rho_{Y,t}}-v^*}_2 \leq B$, $C_\Upsilon = C_\Upsilon(r,B,d,C_{\nabla\CE})$ is as defined in \eqref{eq:C_Upsilon}.
	Moreover, $\omega_i=\mathbbm{1}_{\lambda_i>0}$ for $i\in\{1,2,3\}$ and $c \in (1/2,1)$ can be any constant that satisfies $(1-c)^2 \leq (2c-1)c$.
\end{proposition}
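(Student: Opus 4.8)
By Lemma~\ref{lem:properties_mollifier}, $\phi_r\in\CC_c^\infty(\bbR^d\times\bbR^d)$, so it is admissible in the weak formulation~\eqref{eq:weak_solution_identity}. Set $F(t):=\iint\phi_r(x,y)\,d\rho_t(x,y)$; since $0\le\phi_r\le1$ and $\supp\phi_r=\Omega_r\subset\bbR^d\times B^\infty_{r}(\globmin)$, one has $F(t)\le\rho_t(\bbR^d\times B^\infty_{r}(\globmin))=\rho_{Y,t}(B^\infty_{r}(\globmin))$, so it suffices to establish the differential inequality $\tfrac{d}{dt}F(t)\ge-p\,F(t)$ on $[0,T]$ and then invoke Gr\"onwall's lemma (with $F(0)=\iint\phi_r\,d\rho_0$) to obtain~\eqref{eq:lower_bound_probability_rate}. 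All integrands arising from~\eqref{eq:weak_solution_identity} with $\phi=\phi_r$ are supported on $\Omega_r$, on which $\N{x-\globmin}_\infty,\N{x-y}_\infty<r/2$, $\N{\nabla\CE(x)}_2\le C_{\nabla\CE}\N{x-\globmin}_2$ by~\ref{asm:Lipschitz_gradient} (needed only if $\lambda_3\ne0$), and $\N{x-\conspoint{\rho_{Y,t}}}_\infty\le r/2+B$; these bounds are subsumed in the constant $C_\Upsilon=C_\Upsilon(r,B,d,C_{\nabla\CE})$ of~\eqref{eq:C_Upsilon}.

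Inserting $\phi_r$ into~\eqref{eq:weak_solution_identity} and using the derivative formulas of Lemma~\ref{lem:properties_mollifier} together with the splitting~\eqref{eq:derivatives_abbreviations}, the $y$-derivative term $-\iint\kappa S^{\beta,\theta}(x,y)\langle y-x,\nabla_y\phi_r\rangle\,d\rho_t$ is nonnegative, since $\nabla_y\phi_r$ is componentwise a nonpositive multiple of $y-x$ and $\kappa S^{\beta,\theta}\ge0$; hence it may be discarded. For the rest I group, for each $i\in\{1,2,3\}$, the $i$-th drift (coefficient $(x-\conspoint{\rho_{Y,t}})_k$, $(x-y)_k$, $(\nabla\CE(x))_k$ respectively, paired against $\partial_{x_k}\phi_r=\delta^{*}_{x_k}\phi_r+\delta^{Y}_{x_k}\phi_r$) with the $i$-th diffusion (squared coefficient paired against $\partial^2_{x_kx_k}\phi_r=\delta^{2,*}_{x_kx_k}\phi_r+\delta^{2,Y}_{x_kx_k}\phi_r$). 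Two elementary facts drive the argument. First, for $i=2$ the pairing $-\lambda_2(x-y)_k\,\delta^{Y}_{x_k}\phi_r=\tfrac{r^2}{2}\lambda_2\,(x-y)_k^2\,((r/2)^2-(x-y)_k^2)^{-2}\,\phi_r\ge0$, so it too may be dropped. Second, a short simplification of Lemma~\ref{lem:properties_mollifier} gives $\delta^{2,*}_{x_kx_k}\phi_r=\tfrac{r^2}{2}\,\big(3(x-\globmin)_k^4-(r/2)^4\big)\,((r/2)^2-(x-\globmin)_k^2)^{-4}\,\phi_r$, and likewise for $\delta^{2,Y}_{x_kx_k}\phi_r$ with $(x-y)_k$ in place of $(x-\globmin)_k$; hence on the near-boundary set $\{(x-\globmin)_k^2>c(r/2)^2\}$ — where the hypothesis $(1-c)^2\le(2c-1)c$ forces $3(x-\globmin)_k^4-(r/2)^4\ge(2c-1)c\,(r/2)^4>0$ — the $\delta^{2,*}$-part of the $i$-th diffusion term has a sign favorable to a lower bound (and analogously for $\delta^{2,Y}$).

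It then remains to control, coordinatewise, the "bad" contributions: the whole $i$-th drift for $i\in\{1,3\}$, the $\delta^{*}$-part of the $i=2$ drift, and the negative parts of the diffusion terms. On the region $\{(\cdot)_k^2\le c(r/2)^2\}$, with $(\cdot)_k\in\{(x-\globmin)_k,(x-y)_k\}$ the relevant variable, the derivative coefficients of $\phi_r$ are bounded by the explicit constants $2\sqrt c/((1-c)^2(r/2))$ and $2/((1-c)^4(r/2)^2)$, which after multiplying by the relevant coefficient bound ($C_\Upsilon$, $C_\Upsilon^2$, or $c(r/2)^2$ for the $\delta^{2,Y}$-diffusion of $i=2$, whose coefficient $(x-y)_k^2$ carries the same variable) and by $\lambda_i$ resp.\ $\sigma_i^2$ yield the terms $\frac{2\lambda_iC_\Upsilon\sqrt c}{(1-c)^2(r/2)}\phi_r$, $\frac{\sigma_i^2C_\Upsilon^2}{(1-c)^4(r/2)^2}\phi_r$ and (for $i=2$) $\frac{\sigma_2^2c}{(1-c)^4}\phi_r$. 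On the complementary near-boundary region, where the matching $\delta^{2,*}$- (resp.\ $\delta^{2,Y}$-) diffusion part is nonnegative, Young's inequality splits the bad drift term so that one portion is absorbed into that diffusion part (legitimate because $\sigma_i>0$ whenever $\lambda_i\ne0$, by hypothesis) and the rest is bounded by $\frac{4\lambda_i^2}{\tilde{c}\sigma_i^2}\phi_r$; the strict bound $3(\cdot)_k^4-(r/2)^4\ge(2c-1)c\,(r/2)^4$ on that region is precisely what keeps this leftover proportional to $\phi_r$ rather than merely bounded. Summing over $k=1,\dots,d$ (factor $d$) and over $i$ — the $\delta^{*}$- and $\delta^{Y}$-pairings both contributing for $i\in\{1,3\}$ but only the $\delta^{*}$-pairing for $i=2$, whence the multiplicity $1+\mathbbm{1}_{i\in\{1,3\}}$, and $\omega_i=\mathbbm{1}_{\lambda_i>0}$ killing the blocks with no drift — assembles exactly $\tfrac{d}{dt}F(t)\ge-p\,F(t)$ with $p$ as in~\eqref{eq:def_p_memory}.

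\textbf{Main obstacle.} The delicate step is the near-boundary bookkeeping: one must verify that close to $\partial\Omega_r$ the favorably-signed second-derivative part of each diffusion term genuinely dominates the simultaneously blowing-up bad drift term after a Young split, leaving a remainder that is $O(\phi_r)$ and not just $O(1)$ (a pointwise bound by a constant would be useless after integration). This is what forces the nonstandard constraint $(1-c)^2\le(2c-1)c$, equivalently $c\ge(\sqrt5-1)/2$, on the split parameter, and it relies crucially on the exact expressions for $\delta^{2,*}_{x_kx_k}\phi_r$ and $\delta^{2,Y}_{x_kx_k}\phi_r$ from Lemma~\ref{lem:properties_mollifier}.
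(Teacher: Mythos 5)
Your proposal is correct and follows essentially the same route as the paper: insert the mollifier $\phi_r$ into the weak formulation, discard the sign-favorable memory term and the $i=2$ drift-against-$\delta^Y$ pairing, bound the remaining drift/diffusion pairings coordinatewise on a partition of $\Omega_r$ into a bulk region (direct bounds via $C_\Upsilon$ and the explicit derivative formulas, yielding the first two summands and the $\sigma_2^2c/(1-c)^4$ term) and a near-boundary region (where the positive second-derivative part absorbs the drift up to a remainder $4\lambda_i^2/(\tilde c\sigma_i^2)\phi_r$), then apply Gr\"onwall. The only cosmetic difference is that the paper implements your ``Young split'' via an explicit dichotomy over a set $K_{2k}$ on which the drift either is dominated by the diffusion (total nonnegative) or, by the defining inequality of $K_{2k}$, is directly bounded below by $-\tfrac{4\lambda^2}{\tilde c\sigma^2}\phi_r$ — the same mechanism and the same constants.
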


\begin{remark}
	In order to ensure a finite decay rate $p<\infty$ in Proposition~\ref{lem:lower_bound_probability_memoryCBO} it is crucial to have non-vanishing diffusions $\sigma_1>0$, $\sigma_2>0$ if $\lambda_2\not=0$ and $\sigma_3>0$ if $\lambda_3\not=0$.
	As apparent from the formulation of the statement as well as the proof below, $\sigma_2$ or $\sigma_3$ may be $0$ if the corresponding drift parameter, $\lambda_2$ or $\lambda_3$, respectively, vanishes.
\end{remark}

\begin{proof}[Proof of Proposition~\ref{lem:lower_bound_probability_memoryCBO}]
	By the definition of the marginal $\rho_Y$ and the properties of the mollifier~$\phi_r$ defined in Lemma~\ref{lem:properties_mollifier} we have
	\begin{align*}
		\rho_{Y,t}(B^\infty_{r}(\globmin))
		= \rho_t\big(\bbR^d \times B^\infty_{r}(\globmin)\big)
		\geq \rho_t(\Omega_r)
		\geq \iint \phi_{r}(x,y)\,d\rho_t(x,y).
	\end{align*}
	Our strategy is to derive a lower bound for the right-hand side of this inequality.
	Using the weak solution property of $\rho$ as in Definition~\ref{def:fokker_planck_weak_sense} and the fact that  $\phi_{r}\in \CC^{\infty}_c(\bbR^d\times\bbR^d)$, we obtain
	\begin{equation} \label{eq:initial_evolution}
	\begin{split}
		&\frac{d}{dt}\iint\phi_{r}(x,y)\,d\rho_t(x,y)
		=\sum_{k=1}^d \iint T^s_{k}(x,y) \,d\rho_t(x,y)\\
		&\quad +\! \sum_{k=1}^d \iint \big(T^c_{1k}(x,y) \!+\! T^c_{2k}(x,y) \!+\! T^\ell_{1k}(x,y) \!+\! T^\ell_{2k}(x,y) \!+\! T^g_{1k}(x,y) \!+\! T^g_{2k}(x,y)\big) \,d\rho_t(x,y),
	\end{split}
	\end{equation}
	where $T^s_{k}(x,y) := -\kappa S^{\beta,\theta}(x,y) \left(y-x\right)_k \partial_{y_k}\phi_r(x,y)$ and
	\begin{equation}
	\begin{aligned}
		&\, T^c_{1k}(x,y)
			\!:= \!-\lambda_1\left(x\!-\!\conspoint{\rho_{Y,t}}\right)_k \partial_{x_k}\phi_r(x,y), 
		&\!\!\!\, T^c_{2k}(x,y)
			&\!:= \!\frac{\sigma_1^2}{2} \left(x\!-\!\conspoint{\rho_{Y,t}}\right)_k^2 \partial^2_{x_kx_k} \phi_{r}(x,y), \nonumber \\
		&\, T^\ell_{1k}(x,y)
			\!:= \!-\lambda_2\left(x\!-\!y\right)_k \partial_{x_k}\phi_r(x,y),
		&\!\!\!\, T^\ell_{2k}(x,y)
			&\!:= \!\frac{\sigma_2^2}{2} \left(x\!-\!y\right)_k^2 \partial^2_{x_kx_k} \phi_{r}(x,y), \nonumber \\
		&\, T^g_{1k}(x,y)
			\!:= -\lambda_3\partial_{x_k}\CE(x) \partial_{x_k}\phi_r(x,y), 
		&\!\!\!\, T^g_{2k}(x,y)
			&\!:= \!\frac{\sigma_3^2}{2} \left(\partial_{x_k}\CE(x)\right)^2 \partial^2_{x_kx_k} \phi_{r}(x,y)\nonumber
	\end{aligned}
	\end{equation}
	for $k\in\{1,\dots,d\}$.
	Since the mollifier~$\phi_r$ and its derivatives vanish outside of $\Omega_r$ we restrict our attention to $\Omega_r$ and aim for showing for all $k\in\{1,\dots,d\}$ that
	\begin{itemize}[leftmargin=*,labelindent=5ex,labelsep=3ex,topsep=1ex]
		\item $T^s_{k}(x,y) \geq 0$,
		\item $T^c_{1k}(x,y) + T^c_{2k}(x,y) \geq -p^c\phi_r(x,y)$,
		\item $T^\ell_{1k}(x,y) + T^\ell_{2k}(x,y) \geq -p^\ell\phi_r(x,y)$,
		\item $T^g_{1k}(x,y) + T^g_{2k}(x,y) \geq -p^g\phi_r(x,y)$
	\end{itemize}
	pointwise for all $(x,y)\in\Omega_r$ with suitable constants~$0 \leq p^\ell, p^c, p^g < \infty$.
	
	\noindent
	\textbf{Term $T^s_{k}$:}
	Using the expression for $\partial_{y_k} \phi_{r}$ from Lemma~\ref{lem:properties_mollifier} and the fact that $S^{\beta,\theta}\geq\theta/2\geq0$ it is easy to see that
	\begin{equation} \label{eq:Ts_lowerbound}
		T^s_{k}(x,y)
		= \frac{r^2\kappa}{2} S^{\beta,\theta}(x,y) \frac{\left(y-x\right)^2_k}{\left(\left(\frac{r}{2}\right)^2-\left(x-y\right)^2_k\right)^2}\phi_{r}(x,y)
		\geq 0.
	\end{equation}

	\noindent
	\textbf{Terms $T^c_{1k}+T^c_{2k}$, $T^\ell_{1k}+T^\ell_{2k}$ and $T^g_{1k}+T^g_{2k}$:}
	We first note that the third inequality from above holds with $p^\ell=0$ if $\lambda_2=\sigma_2=0$ and the fourth with $p^g=0$ if $\lambda_3=\sigma_3=0$.
	
	Therefore, in what follows we assume that $\lambda_2,\sigma_2,\lambda_3,\sigma_3>0$.
	In order to lower bound the three terms from above, we arrange the summands by using the abbreviations introduced in~\eqref{eq:derivatives_abbreviations} as follows.
	For $T^c_{1k}+T^c_{2k}$ we have
	\begin{subequations}
	\begin{align}
		&T^c_{1k}(x,y)+T^c_{2k}(x,y)\nonumber\\
		&\qquad\,= -\lambda_1\left(x-\conspoint{\rho_{Y,t}}\right)_k \delta^{*}_{x_k} \phi_{r}(x,y) + \frac{\sigma_1^2}{2} \left(x-\conspoint{\rho_{Y,t}}\right)_k^2 \delta^{2,*}_{x_kx_k} \phi_{r}(x,y) \label{eq:Tca} \\
			&\qquad\quad\,\,-\lambda_1\left(x-\conspoint{\rho_{Y,t}}\right)_k \delta^Y_{x_k} \phi_{r}(x,y) + \frac{\sigma_1^2}{2} \left(x-\conspoint{\rho_{Y,t}}\right)_k^2 \delta^{2,Y}_{x_kx_k} \phi_{r}(x,y), \label{eq:Tcb}
	\end{align}
	\end{subequations}
	for $T^\ell_{1k}+T^\ell_{2k}$ we have
	\begin{subequations}
	\begin{align}
		&T^\ell_{1k}(x,y)+T^\ell_{2k}(x,y)\nonumber\\
		&\qquad\,=-\lambda_2\left(x-y\right)_k \delta^{*}_{x_k} \phi_{r}(x,y) + \frac{\sigma_2^2}{2} \left(x-y\right)_k^2 \delta^{2,*}_{x_kx_k} \phi_{r}(x,y) \label{eq:Tla} \\
			&\qquad\quad\,\,-\lambda_2\left(x-y\right)_k \delta^Y_{x_k} \phi_{r}(x,y) + \frac{\sigma_2^2}{2} \left(x-y\right)_k^2 \delta^{2,Y}_{x_kx_k} \phi_{r}(x,y) \label{eq:Tlb}
	\end{align}
	\end{subequations}
	and for $T^g_{1k}+T^g_{2k}$ we have
	\begin{subequations}
	\begin{align}
		&T^g_{1k}(x,y)+T^g_{2k}(x,y)\nonumber\\
		&\qquad\,=-\lambda_3\partial_{x_k}\CE(x) \delta^{*}_{x_k} \phi_{r}(x,y) + \frac{\sigma_3^2}{2} (\partial_{x_k}\CE(x))^2 \delta^{2,*}_{x_kx_k} \phi_{r}(x,y) \label{eq:Tga} \\
			&\qquad\quad\,\,-\lambda_3\partial_{x_k}\CE(x) \delta^Y_{x_k} \phi_{r}(x,y) + \frac{\sigma_3^2}{2} (\partial_{x_k}\CE(x))^2 \delta^{2,Y}_{x_kx_k} \phi_{r}(x,y). \label{eq:Tgb}
	\end{align}
	\end{subequations}
	We now treat each of the two-part sums in~\eqref{eq:Tca}, \eqref{eq:Tcb}, \eqref{eq:Tla}, \eqref{eq:Tlb}, \eqref{eq:Tga} and \eqref{eq:Tgb} separately by employing a technique similar to the one used in the proof of~\cite[Proposition~2]{fornasier2021convergence}, which was developed originally to prove~\cite[Proposition~20]{fornasier2021consensus}.

	\noindent
	\textbf{Terms~\eqref{eq:Tca}, \eqref{eq:Tla} and \eqref{eq:Tga}:}
	Owed to their similar structure (in particular with respect to the denominator of the derivatives~$\delta^{*}_{x_k} \phi_{r}$ and $\delta^{2,*}_{x_kx_k} \phi_{r}$) we can treat the three sums~\eqref{eq:Tca}, \eqref{eq:Tla} and \eqref{eq:Tga} simultaneously.
	Therefore we consider the general formulation
	\begin{equation} \label{eq:TlaTga_}
		-\lambda\Upsilon_k(x,y) \delta^{*}_{x_k} \phi_{r}(x,y) + \frac{\sigma^2}{2} \Upsilon_k^2(x,y) \delta^{2,*}_{x_kx_k} \phi_{r}(x,y) =: T^*_{1k}(x,y)+T^*_{2k}(x,y),
	\end{equation}
	which matches
		\eqref{eq:Tca} when $\Upsilon_k(x,y) = (x-\conspoint{\rho_{Y,t}})_k$, $\lambda=\lambda_1$ and $\sigma=\sigma_1$,
		\eqref{eq:Tla} when $\Upsilon_k(x,y) = (x-y)_k$, $\lambda=\lambda_2$ and $\sigma=\sigma_2$,
		and \eqref{eq:Tga} when $\Upsilon_k(x,y) = \partial_{x_k}\CE(x)$, $\lambda=\lambda_3$ and $\sigma=\sigma_3$.
	
	To achieve the desired lower bound over $\Omega_r$, we introduce the subsets
	\begin{align*}
		K^*_{1k} &:= \left\{(x,y) \in \bbR^d\times\bbR^d : \abs{\left(x-\globmin\right)_k} > \frac{\sqrt{c}}{2}r\right\}
	\end{align*}
	and
	\begin{align*}
	\begin{aligned}
		&K^*_{2k} := \Bigg\{(x,y) \in \bbR^d\times\bbR^d : -\lambda\Upsilon_k(x,y) \left(x-\globmin\right)_k \left(\left(\frac{r}{2}\right)^2-\left(x-\globmin\right)_k^2\right)^2 \\
		&\qquad\qquad\qquad\qquad\qquad\qquad\qquad > \tilde{c}\left(\frac{r}{2}\right)^2\frac{\sigma^2}{2} \Upsilon_k^2(x,y)\left(x-\globmin\right)_k^2\Bigg\},
	\end{aligned}
	\end{align*}
	where $\tilde{c} := 2c-1\in(0,1)$.
	For fixed $k$ we now decompose $\Omega_r$ according to
	\begin{align*} 
		\Omega_r = \big((K_{1k}^{*})^c \cap \Omega_r\big) \cup \big(K^*_{1k} \cap (K_{2k}^{*})^c \cap \Omega_r\big) \cup \big(K^*_{1k} \cap K^*_{2k} \cap \Omega_r\big).
	\end{align*}

	\noindent
	In the following we treat each of these three subsets separately.

	\noindent
	\textit{Subset $(K_{1k}^{*})^c \cap \Omega_r$:}
	We have $\abs{\left(x-\globmin\right)_k} \leq \frac{\sqrt{c}}{2}r$ for each $(x,y) \in (K_{1k}^*)^c$, which can be used to independently derive lower bounds for both summands in~\eqref{eq:TlaTga_}.
	For the first we insert the expression for $\delta^{*}_{x_k} \phi_{r}(x,y)$ to get
	\begin{equation} \label{eq:T*_set1_1_lowerbound}
	\begin{aligned}
		T^*_{1k}(x,y)
		&= \frac{r^2}{2}\lambda\Upsilon_k(x,y) \frac{\left(x-\globmin\right)_k}{\left(\left(\frac{r}{2}\right)^2-\left(x-\globmin\right)^2_k\right)^2}\phi_{r}(x,y) \\
		&\geq -\frac{r^2}{2} \!\lambda \frac{\abs{\Upsilon_k(x,y)}\!\abs{\left(x-\globmin\right)_k}}{\left(\left(\frac{r}{2}\right)^2-\left(x-\globmin\right)^2_k\right)^2}\phi_r(x,y)
		\geq - \frac{2\lambda C_\Upsilon\sqrt{c}}{(1-c)^2\frac{r}{2}}\phi_r(x,y) \\
		&=: -p^{*,\Upsilon}_{1}\phi_r(x,y),
	\end{aligned}
	\end{equation}
	where, in the last inequality, we used that $(x,y)\in\Omega_r$, the definition of $B$ and Assumption~\ref{asm:Lipschitz_gradient} to get the bound
	\begin{equation} \label{eq:C_Upsilon}
	\begin{split}
		\abs{\Upsilon_k(x,y)} 
		&= \begin{cases}
			\abs{(x-\conspoint{\rho_{Y,t}})_k} 
					\leq \frac{r}{2}+B,
				&\text{if }\Upsilon_k(x,y) = (x-\conspoint{\rho_{Y,t}})_k, \\
			\abs{(x-y)_k} \leq \frac{r}{2},
				&\text{if }\Upsilon_k(x,y) = (x-y)_k, \\
			\abs{\partial_{x_k}\CE(x)} \leq \N{\nabla\CE(x)}_2 \leq C_{\nabla\CE}\N{x-\globmin}_2\\
					\qquad\leq C_{\nabla\CE}d\N{x-\globmin}_\infty \leq C_{\nabla\CE}d\frac{r}{2}
				&\text{if }\Upsilon_k(x,y) = \partial_{x_k}\CE(x).
		\end{cases}\\
		&\leq \max\left\{\frac{r}{2}+B, C_{\nabla\CE}d\frac{r}{2}\right\}
		=: C_\Upsilon(r,B,d,C_{\nabla\CE}).
	\end{split}
	\end{equation}
	For the second summand we insert the expression for $\delta^{2,*}_{x_kx_k} \phi_{r}(x,y)$ to obtain
	\begin{equation} \label{eq:T*_set1_2_lowerbound}
	\begin{aligned}
		T^*_{2k}(x,y)
		&=\frac{\sigma^2}{2} \Upsilon_k^2(x,y) \delta^{2,*}_{x_kx_k} \phi_{r}(x,y)\\
		&=\sigma^2 \left(\frac{r}{2}\right)^2 \Upsilon_k^2(x,y) \frac{2\left(2\left(x\!-\!\globmin\right)^2_k\!-\!\left(\frac{r}{2}\right)^2\right)\left(x\!-\!\globmin\right)_k^2 \!-\! \left(\left(\frac{r}{2}\right)^2\!-\!\left(x\!-\!\globmin\right)^2_k\right)^2}{\left(\left(\frac{r}{2}\right)^2-\left(x\!-\!\globmin\right)^2_k\right)^4}
			\phi_{r}(x,y)\\
		&\geq - \frac{\sigma^2C_\Upsilon^2}{(1-c)^4\left(\frac{r}{2}\right)^2}\phi_{r}(x,y) =: -p^{*,\Upsilon}_{2}\phi_r(x,y),
	\end{aligned}
	\end{equation}
	where the last inequality uses $\Upsilon_k^2(x,y) \leq C_\Upsilon^2$.

	\noindent
	\textit{Subset $K^*_{1k} \cap (K_{2k}^{*})^c \cap \Omega_r$:}
	As $(x,y) \in K^*_{1k}$ we have $\abs{\left(x-\globmin\right)_k} > \frac{\sqrt{c}}{2}r$.
	We observe that the sum in~\eqref{eq:TlaTga_} is nonnegative for all $(x,y)$ in this subset whenever
	\begin{equation} \label{eq:aux_term_3_*}
	\begin{aligned}
		&\left(-\lambda\Upsilon_k(x,y)\left(x-\globmin\right)_k + \frac{\sigma^2}{2} \Upsilon_k^2(x,y)\right)\left(\left(\frac{r}{2}\right)^2-\left(x-\globmin\right)^2_k\right)^2 \\
		&\qquad\qquad\qquad\qquad\qquad\, \leq \sigma^2\Upsilon_k^2(x,y) \left(2\left(x-\globmin\right)^2_k-\left(\frac{r}{2}\right)^2\right)\left(x-\globmin\right)_k^2.
	\end{aligned}
	\end{equation}
	The first term on the left-hand side in \eqref{eq:aux_term_3_*} can be bounded from above by exploiting that $v\in (K_{2k}^{*})^c$ and by using the relation $\tilde{c} = 2c-1$.
	More precisely, we have
	\begin{align*}
		&-\lambda\Upsilon_k(x,y)\left(x-\globmin\right)_k \left(\left(\frac{r}{2}\right)^2-\left(x-\globmin\right)^2_k\right)^2
		\leq \tilde{c}\left(\frac{r}{2}\right)^2\frac{\sigma^2}{2} \Upsilon_k^2(x,y)\left(x-\globmin\right)_k^2\\
		&\quad\;\; = (2c\!-\!1)\left(\frac{r}{2}\right)^2\!\frac{\sigma^2}{2} \Upsilon_k^2(x,y)\left(x-\globmin\right)_k^2
		\leq\! \left(2\left(x-\globmin\right)_k^2-\left(\frac{r}{2}\right)^2\right)\!\frac{\sigma^2}{2} \Upsilon_k^2(x,y)\left(x-\globmin\right)_k^2,
	\end{align*}
	where the last inequality follows since $v\in K^*_{1k}$.
	For the second term on the left-hand side in \eqref{eq:aux_term_3_*} we can use $(1-c)^2 \leq (2c-1)c$ as per assumption, to get
	\begin{align*}
		&\frac{\sigma^2}{2} \Upsilon_k^2(x,y)\left(\left(\frac{r}{2}\right)^2-\left(x-\globmin\right)^2_k\right)^2
		\leq \frac{\sigma^2}{2} \Upsilon_k^2(x,y) (1-c)^2\left(\frac{r}{2}\right)^4 \\
		&\qquad \leq \frac{\sigma^2}{2} \Upsilon_k^2(x,y) (2c-1)\left(\frac{r}{2}\right)^2c\left(\frac{r}{2}\right)^2
		\leq \frac{\sigma^2}{2} \Upsilon_k^2(x,y) \left(2\left(x-\globmin\right)_k^2-\left(\frac{r}{2}\right)^2\right)\left(x-\globmin\right)_k^2.
	\end{align*}
	Hence, \eqref{eq:aux_term_3_*} holds and we have that \eqref{eq:TlaTga_} is uniformly nonnegative on this subset.

	\noindent
	\textit{Subset $K^*_{1k} \cap K^*_{2k} \cap \Omega_r$:}
	As $(x,y) \in K_{1k}^*$ we have $\abs{\left(x-\globmin\right)_k} > \frac{\sqrt{c}}{2}r$.
	To start with we note that the first summand of~\eqref{eq:TlaTga_} vanishes whenever $\sigma^2\Upsilon_k^2(x,y) = 0$, provided $\sigma>0$, so nothing needs to be done if $\Upsilon_k(x,y)=0$.
	Otherwise, if $\sigma^2\Upsilon_k^2(x,y) > 0$, we exploit $(x,y)\in K_{2k}^*$ to get
	\begin{equation*}
	\begin{split}
		\frac{\Upsilon_k(x,y)\left(x-\globmin\right)_k}{\left(\left(\frac{r}{2}\right)^2-\left(x-\globmin\right)_k^2\right)^2}
		&\geq\frac{-\abs{\Upsilon_k(x,y)}\abs{\left(x-\globmin\right)_k}}{\left(\left(\frac{r}{2}\right)^2-\left(x-\globmin\right)_k^2\right)^2}\\
		&> \frac{2\lambda\Upsilon_k(x,y) \left(x-\globmin\right)_k}{\tilde{c}\left(\frac{r}{2}\right)^2\sigma^2\abs{\Upsilon_k(x,y)}\abs{\left(x-\globmin\right)_k}}
		\geq -\frac{8\lambda}{\tilde{c}r^2\sigma^2}.
	\end{split}
	\end{equation*}
	Using this, the first summand of~\eqref{eq:TlaTga_} can be bounded from below by
	\begin{equation} \label{eq:T*_set3_1_lowerbound}
	\begin{split}
		T^*_{1k}(x,y)
		=\lambda\frac{r^2}{2}\frac{\Upsilon_k(x,y)\left(x-\globmin\right)_k}{\left(\left(\frac{r}{2}\right)^2-\left(x-\globmin\right)^2_k\right)^2}\phi_{r}(x,y)
		&\geq -\frac{4\lambda^2}{\tilde{c}\sigma^2}\phi_{r}(x,y) =: -p^{*,\Upsilon}_{3}\phi_r(x,y).
	\end{split}
	\end{equation}
	For the second summand, the nonnegativity of $\sigma^2\Upsilon_k^2(x,y)$ implies the nonnegativity, whenever
	\begin{equation*}
		2\left(2\left(x-\globmin\right)^2_k-\left(\frac{r}{2}\right)^2\right)\left(x-\globmin\right)_k^2 \geq \left(\left(\frac{r}{2}\right)^2-\left(x-\globmin\right)^2_k\right)^2.
	\end{equation*}
	This holds for $v\in K_{1k}^*$, if $2(2c-1)c \geq (1-c)^2$ as implied by the assumption.

	\noindent
	\textbf{Term~\eqref{eq:Tlb}:}
	Recall that this term has the structure
	\begin{equation}\label{eq:Tlb_}
		-\lambda_2\left(x-y\right)_k \delta^Y_{x_k} \phi_{r}(x,y) + \frac{\sigma_2^2}{2} \left(x-y\right)_k^2 \delta^{2,Y}_{x_kx_k} \phi_{r}(x,y)=:T^{Y,1}_{1k}(x,y)+T^{Y,1}_{2k}(x,y).
	\end{equation}
	We first note that the first summand of~\eqref{eq:Tlb_} is always nonnegative since
	\begin{equation} \label{eq:TY1_set1_1_lowerbound}
		T^{Y,1}_{1k}(x,y)
		= \lambda_2 \frac{r^2}{2}\frac{\left(x-y\right)_k^2}{\left(\left(\frac{r}{2}\right)^2-\left(x-y\right)^2_k\right)^2}\phi_{r}(x,y)
		\geq0.
	\end{equation}
	For the second summand of~\eqref{eq:Tlb_} a direct computation shows
	\begin{equation*}
	\begin{split}
		T^{Y,1}_{2k}(x,y)
		= \sigma_2^2 \left(\frac{r}{2}\right)^2 \left(x-y\right)_k^2 \frac{3\left(x-y\right)^4_k-\left(\frac{r}{2}\right)^4}{\left(\left(\frac{r}{2}\right)^2-\left(x-y\right)^2_k\right)^4}
			\phi_{r}(x,y),
	\end{split}
	\end{equation*}
	which is nonnegative on the set 
	\begin{align*}
		K^Y_k := \left\{(x,y) \in \bbR^d\times\bbR^d : \abs{\left(x-y\right)_k} > \frac{\sqrt{c}}{2}r\right\}
	\end{align*}
	for any $c\geq1/\sqrt{3}$, as ensured by $(1-c)^2 \leq (2c-1)c$.
	On the complement~$(K^Y_k)^c$ we have $\abs{\left(x-y\right)_k} \leq \frac{\sqrt{c}}{2}r$, which can be used to bound
	\begin{equation} \label{eq:TY1_set2_2_lowerbound}
	\begin{aligned}
		T^{Y,1}_{2k}(x,y)
		&=\sigma_2^2 \left(\frac{r}{2}\right)^2 \!\left(x\!-\!y\right)_k^2 \frac{3\left(x\!-\!y\right)^4_k\!-\!\left(\frac{r}{2}\right)^4}{\left(\left(\frac{r}{2}\right)^2\!-\!\left(x\!-\!y\right)^2_k\right)^4}\phi_{r}(x,y)\\
		&\geq-\frac{\sigma_2^2 c}{\left(1\!-\!c\right)^4}\phi_{r}(x,y)
		=: -p^{Y,\Upsilon_\ell}\phi_r(x,y).
	\end{aligned}
	\end{equation}

	\noindent
	\textbf{Terms~\eqref{eq:Tcb} and~\eqref{eq:Tgb}:}
	The final two terms to be controlled have again a similar structure of the form
	\begin{equation} \label{eq:Tgb_}
		-\lambda \Upsilon_k(x,y) \delta^Y_{x_k} \phi_{r}(x,y) + \frac{\sigma^2}{2} \Upsilon_k^2(x,y) \delta^{2,Y}_{x_kx_k} \phi_{r}(x,y)=:T^{Y,2}_{1k}(x,y)+T^{Y,2}_{2k}(x,y),
	\end{equation}
	where we recycle the notation introduced after~\eqref{eq:TlaTga_}, i.e.,
		$\Upsilon_k(x,y) = (x-\conspoint{\rho_{Y,t}})_k$, $\lambda=\lambda_1$ and $\sigma=\sigma_1$ in the case of \eqref{eq:Tcb}
		and $\Upsilon_k(x,y) = \partial_{x_k}\CE(x)$, $\lambda=\lambda_3$ and $\sigma=\sigma_3$ in the case of \eqref{eq:Tgb}.
	
	The procedure for deriving lower bounds is similar to the one at the beginning with the exception that the denominator of the derivatives~$\delta^Y_{x_k} \phi_{r}$ and $\delta^{2,Y}_{x_kx_k} \phi_{r}$ requires to introduce an adapted decomposition of $\Omega_r$.
	To be more specific, we define the subsets
	\begin{align*}
		K^Y_{1k} &:= \left\{(x,y) \in \bbR^d\times\bbR^d : \abs{\left(x-y\right)_k} > \frac{\sqrt{c}}{2}r\right\}
	\end{align*}
	and
	\begin{align*}
	\begin{aligned}
		&K^Y_{2k} := \Bigg\{(x,y) \in \bbR^d\times\bbR^d : -\lambda\Upsilon_k(x,y) \left(x-y\right)_k \left(\left(\frac{r}{2}\right)^2-\left(x-y\right)_k^2\right)^2 \\
		&\qquad\qquad\qquad\qquad\qquad\qquad\qquad > \tilde{c}\left(\frac{r}{2}\right)^2\frac{\sigma^2}{2} \Upsilon_k^2(x,y)\left(x-y\right)_k^2\Bigg\},
	\end{aligned}
	\end{align*}
	where $\tilde{c} := 2c-1\in(0,1)$.
	For fixed $k$ we now decompose $\Omega_r$ according to
	\begin{align*} 
		\Omega_r = \big((K_{1k}^Y)^c \cap \Omega_r\big) \cup \big(K^Y_{1k} \cap (K_{2k}^Y)^c \cap \Omega_r\big) \cup \big(K^Y_{1k} \cap K^Y_{2k} \cap \Omega_r\big).
	\end{align*}

	\noindent
	In the following we treat again each of these three subsets separately.

	\noindent
	\textit{Subset $(K_{1k}^Y)^c \cap \Omega_r$:}
	We have $\abs{\left(x-y\right)_k} \leq \frac{\sqrt{c}}{2}r$ for each $(x,y) \in (K_{1k}^Y)^c$, which can be used to independently derive lower bounds for both summands in~\eqref{eq:Tgb_}.
	For the first summand we insert the expression for $\delta^Y_{x_k} \phi_{r}(x,y)$ to get
	\begin{equation} \label{eq:TY2_set1_1_lowerbound}
	\begin{aligned}
		T^{Y,2}_{1k}(x,y)
		&= \frac{r^2}{2}\lambda\Upsilon_k(x,y) \frac{\left(x-y\right)_k}{\left(\left(\frac{r}{2}\right)^2-\left(x-y\right)^2_k\right)^2}\phi_{r}(x,y) \\
		&\geq -\frac{r^2}{2} \!\lambda \frac{\abs{\Upsilon_k(x,y)}\!\abs{\left(x-y\right)_k}}{\left(\left(\frac{r}{2}\right)^2-\left(x-y\right)^2_k\right)^2}\phi_r(x,y)
		\geq - \frac{2\lambda C_\Upsilon\sqrt{c}}{(1-c)^2\frac{r}{2}}\phi_r(x,y) \\
		&=: -p^{Y,\Upsilon}_{1}\phi_r(x,y),
	\end{aligned}
	\end{equation}
	where we recall from above that $\Upsilon_k(x,y) \leq C_\Upsilon$, which was used in the last inequality.
	For the second summand we insert the expression for $\delta^{2,Y}_{x_kx_k} \phi_{r}(x,y)$ to obtain
	\begin{equation} \label{eq:TY2_set1_2_lowerbound}
	\begin{aligned}
		T^{Y,2}_{2k}(x,y)
		&=\sigma^2 \left(\frac{r}{2}\right)^2 \Upsilon_k^2(x,y) \frac{2\left(2\left(x\!-\!y\right)^2_k\!-\!\left(\frac{r}{2}\right)^2\right)\left(x\!-\!y\right)_k^2 \!-\! \left(\left(\frac{r}{2}\right)^2\!-\!\left(x\!-\!y\right)^2_k\right)^2}{\left(\left(\frac{r}{2}\right)^2\!-\!\left(x\!-\!y\right)^2_k\right)^4}
			\phi_{r}(x,y)\\
		&\geq - \frac{\sigma^2C_\Upsilon^2}{(1-c)^4\left(\frac{r}{2}\right)^2}\phi_{r}(x,y) =: -p^{Y,\Upsilon}_{2}\phi_r(x,y),
	\end{aligned}
	\end{equation}
	where the last inequality uses $\Upsilon_k^2(x,y) \leq C_\Upsilon^2$.

	\noindent
	\textit{Subset $K^Y_{1k} \cap (K_{2k}^Y)^c \cap \Omega_r$:}
	As $(x,y) \in K^Y_{1k}$ we have $\abs{\left(x-y\right)_k} > \frac{\sqrt{c}}{2}r$.
	We observe that the sum in~\eqref{eq:Tgb_} is nonnegative for all $(x,y)$ in this subset whenever
	\begin{equation} \label{eq:aux_term_3_y}
	\begin{aligned}
		&\left(-\lambda\Upsilon_k(x,y)\left(x-y\right)_k + \frac{\sigma^2}{2} \Upsilon_k^2(x,y)\right)\left(\left(\frac{r}{2}\right)^2-\left(x-y\right)^2_k\right)^2 \\
		&\qquad\qquad\qquad\qquad\qquad\, \leq \sigma^2\Upsilon_k^2(x,y) \left(2\left(x-y\right)^2_k-\left(\frac{r}{2}\right)^2\right)\left(x-y\right)_k^2.
	\end{aligned}
	\end{equation}
	The first term on the left-hand side in \eqref{eq:aux_term_3_y} can be bounded from above exploiting that $v\in (K_{2k}^Y)^c$ and by using the relation $\tilde{c} = 2c-1$.
	More precisely, we have
	\begin{align*}
		&-\lambda\Upsilon_k(x,y)\left(x-y\right)_k \left(\left(\frac{r}{2}\right)^2-\left(x-y\right)^2_k\right)^2
		\leq \tilde{c}\left(\frac{r}{2}\right)^2\frac{\sigma^2}{2} \Upsilon_k^2(x,y)\left(x-y\right)_k^2\\
		&\qquad = (2c-1)\left(\frac{r}{2}\right)^2\frac{\sigma^2}{2} \Upsilon_k^2(x,y)\left(x-y\right)_k^2
		\leq\! \left(2\left(x-y\right)_k^2-\left(\frac{r}{2}\right)^2\right)\frac{\sigma^2}{2} \Upsilon_k^2(x,y)\left(x-y\right)_k^2,
	\end{align*}
	where the last inequality follows since $v\in K^Y_{1k}$.
	For the second term on the left-hand side in \eqref{eq:aux_term_3_y} we can use $(1-c)^2 \leq (2c-1)c$ as per assumption, to get
	\begin{align*}
		&\frac{\sigma^2}{2} \Upsilon_k^2(x,y)\left(\left(\frac{r}{2}\right)^2-\left(x-y\right)^2_k\right)^2
		\leq \frac{\sigma^2}{2} \Upsilon_k^2(x,y) (1-c)^2\left(\frac{r}{2}\right)^4 \\
		&\qquad \leq \frac{\sigma^2}{2} \Upsilon_k^2(x,y) (2c-1)\left(\frac{r}{2}\right)^2c\left(\frac{r}{2}\right)^2
		\leq \frac{\sigma^2}{2} \Upsilon_k^2(x,y) \left(2\left(x-y\right)_k^2-\left(\frac{r}{2}\right)^2\right)\left(x-y\right)_k^2.
	\end{align*}
	Hence, \eqref{eq:aux_term_3_y} holds and we have that \eqref{eq:Tgb_} is uniformly nonnegative on this subset.

	\noindent
	\textit{Subset $K^Y_{1k} \cap K^Y_{2k} \cap \Omega_r$:}
	As $(x,y) \in K_{1k}^Y$ we have $\abs{\left(x-y\right)_k} > \frac{\sqrt{c}}{2}r$.
	To start with we note that the first summand of~\eqref{eq:Tgb_} vanishes whenever $\sigma^2\Upsilon_k^2(x,y) = 0$, provided $\sigma>0$, so nothing needs to be done if $\Upsilon_k(x,y)=0$.
	Otherwise, if $\sigma^2\Upsilon_k^2(x,y) > 0$, we exploit $(x,y)\in K_{2k}^Y$ to get
	\begin{equation*}
	\begin{split}
		\frac{\Upsilon_k(x,y)\left(x-y\right)_k}{\left(\left(\frac{r}{2}\right)^2-\left(x-y\right)_k^2\right)^2}
		&\geq\frac{-\abs{\Upsilon_k(x,y)}\abs{\left(x-y\right)_k}}{\left(\left(\frac{r}{2}\right)^2-\left(x-y\right)_k^2\right)^2}\\
		&> \frac{2\lambda \Upsilon_k(x,y) \left(x-y\right)_k}{\tilde{c}\left(\frac{r}{2}\right)^2\sigma^2\abs{\Upsilon_k(x,y)}\abs{\left(x-y\right)_k}}
		\geq -\frac{8\lambda}{\tilde{c}r^2\sigma^2}.
	\end{split}
	\end{equation*}
	Using this, the first summand of~\eqref{eq:Tgb_} can be bounded from below by
	\begin{equation} \label{eq:TY2_set3_1_lowerbound}
	\begin{split}
		T^{Y,2}_{1k}(x,y)
		=\lambda\frac{r^2}{2}\frac{\Upsilon_k(x,y)\left(x-y\right)_k}{\left(\left(\frac{r}{2}\right)^2-\left(x-y\right)^2_k\right)^2}\phi_{r}(x,y)
		&\geq -\frac{4\lambda^2}{\tilde{c}\sigma^2}\phi_{r}(x,y) =: -p^{Y,\Upsilon}_{3}\phi_r(x,y).
	\end{split}
	\end{equation}
	For the second summand, the nonnegativity of $\sigma^2\Upsilon_k^2(x,y)$ implies the nonnegativity, whenever
	\begin{equation*}
		2\left(2\left(x-y\right)^2_k-\left(\frac{r}{2}\right)^2\right)\left(x-y\right)_k^2 \geq \left(\left(\frac{r}{2}\right)^2-\left(x-y\right)^2_k\right)^2.
	\end{equation*}
	This holds for $v\in K_{1k}^Y$, if $2(2c-1)c \geq (1-c)^2$ as implied by the assumption.

	\noindent
	\textbf{Concluding the proof:}
	Combining the formerly established lower bounds \eqref{eq:T*_set1_1_lowerbound}, \eqref{eq:T*_set1_2_lowerbound}, \eqref{eq:T*_set3_1_lowerbound}, \eqref{eq:TY1_set1_1_lowerbound}, \eqref{eq:TY1_set2_2_lowerbound}, \eqref{eq:TY2_set1_1_lowerbound}, \eqref{eq:TY2_set1_2_lowerbound} and \eqref{eq:TY2_set3_1_lowerbound}, we obtain for the constants $p^c$, $p^\ell$ and $p^g$ defined at the beginning of the proof
	\begin{equation}
	\begin{split}
		p^c 
			&= p^{*,\Upsilon_c}_{1} \!+ p^{*,\Upsilon_c}_{2} \!+ p^{*,\Upsilon_c}_{3} \!+ p^{Y,\Upsilon_c}_{1} \!+ p^{Y,\Upsilon_c}_{2} \!+ p^{Y,\Upsilon_c}_{3}
			= 2\!\left(\frac{2\lambda_1 C_\Upsilon\sqrt{c}}{(1-c)^2\frac{r}{2}} \!+\! \frac{\sigma_1^2C_\Upsilon^2}{(1-c)^4\!\left(\frac{r}{2}\right)^2} \!+\! \frac{4\lambda_1^2}{\tilde{c}\sigma_1^2}\right)\\
		p^\ell 
			&= p^{*,\Upsilon_\ell}_{1} \!+ p^{*,\Upsilon_\ell}_{2} \!+ p^{*,\Upsilon_\ell}_{3} \!+ p^{Y,\Upsilon_\ell}
			= \frac{2\lambda_2 C_\Upsilon\sqrt{c}}{(1-c)^2\frac{r}{2}} \!+\! \frac{\sigma_2^2C_\Upsilon^2}{(1-c)^4\left(\frac{r}{2}\right)^2} \!+\! \frac{4\lambda_2^2}{\tilde{c}\sigma_2^2} \!+\! \frac{\sigma_2^2c}{(1-c)^4}\\
		p^g
			&= p^{*,\Upsilon_g}_{1} \!+ p^{*,\Upsilon_g}_{2} \!+ p^{*,\Upsilon_g}_{3} \!+ p^{Y,\Upsilon_g}_{1} \!+ p^{Y,\Upsilon_g}_{2} \!+ p^{Y,\Upsilon_g}_{3}
			= 2\!\left(\frac{2\lambda_3 C_\Upsilon\sqrt{c}}{(1-c)^2\frac{r}{2}} \!+\! \frac{\sigma_3^2C_\Upsilon^2}{(1-c)^4\!\left(\frac{r}{2}\right)^2} \!+\! \frac{4\lambda_3^2}{\tilde{c}\sigma_3^2}\right)\!.
	\end{split}
	\end{equation}
	Together with \eqref{eq:Ts_lowerbound} and by using the evolution of $\phi_r$ as in \eqref{eq:initial_evolution} we eventually obtain
	\begin{equation*}
	\begin{split}
		&\frac{d}{dt}\iint\!\phi_{r}\,d\rho_t
		\geq -d\left(p^c + p^\ell + p^g\right) \iint\!\phi_{r}\,d\rho_t\\
		&\;\quad\,\geq -d\sum_{i=1}^3\omega_i\!\left(\!\left(1\!+\!\mathbbm{1}_{i\not=2}\right)\!\left(\frac{2\lambda_i C_\Upsilon\sqrt{c}}{(1\!-\!c)^2\frac{r}{2}} \!+\! \frac{\sigma_i^2C_\Upsilon^2}{(1\!-\!c)^4\!\left(\frac{r}{2}\right)^2} \!+\! \frac{4\lambda_i^2}{\tilde{c}\sigma_i^2}\right) \!+\! \mathbbm{1}_{i=2}\frac{\sigma_2^2c}{(1\!-\!c)^4} \!\right) \!\!\iint\!\phi_{r}\,d\rho_t\\
		&\;\quad\,= -q \iint\!\phi_{r}\,d\rho_t,
	\end{split}
	\end{equation*}
	where $q$ is defined implicitly and
	where $\omega_i=\mathbbm{1}_{\lambda_i>0}$ for $i\in\{1,2,3\}$.
	Notice that $\omega_1=1$ since $\lambda_1>0$ by assumption.	
	An application of Gr\"onwall's inequality concludes the proof.
\end{proof}

\subsection{Proof of Theorem~\ref{thm:global_convergence_main}} \label{subsec:proof_main}
We now have all necessary tools at hand to prove the global mean-field law convergence result for CBO with memory effects and gradient information by rigorously combining the formerly discussed statements.

\revised{\begin{proof}[Proof of Theorem~\ref{thm:global_convergence_main}]
	If $\CV(\rho_0)=0$, there is nothing to be shown since in this case $\rho_0=\delta_{(\globmin,\globmin)}$.
	Thus, let $\CV(\rho_0)>0$ in what follows.

	Let us first choose the parameter~$\alpha$ such that
	\begin{align} \label{eq:alpha}
	\begin{split}
		\alpha > \alpha_0
		:= \frac{1}{q_\varepsilon}\Bigg(\log\left(\frac{2^{d+2}\sqrt{d}}{c\left(\vartheta,\chi_1,\lambda_1,\sigma_1\right)}\right) 
		&+ \max\left\{\frac{1}{2},\frac{p}{(1-\vartheta)\chi_1}\right\}\log\left(\frac{\CV(\rho_0)}{\varepsilon}\right)\\
		&- \log\rho_0\big(\Omega_{r_\varepsilon/2}\big)\!\Bigg),
	\end{split}
	\end{align}
	where we introduce the definitions
	\begin{align}
		c\left(\vartheta,\chi_1,\lambda_1,\sigma_1\right)
		:= \min\left\{
			\frac{\vartheta}{2}\frac{\chi_1}{2\sqrt{2}\left(\lambda_1+\sigma_1^2\right)},
			\sqrt{\frac{\vartheta}{2}\frac{\chi_1}{\sigma_1^2}}
			\right\} 
	\end{align}
	as well as
	\begin{align} \label{eq:q_and_r_memory}
		q_\varepsilon := \frac{1}{2}\min\bigg\{\left(\eta\frac{c\left(\vartheta,\chi_1,\lambda_1,\sigma_1\right)\sqrt{\varepsilon}}{2\sqrt{d}}\right)^{1/\nu}\!,\CE_{\infty}\bigg\}
		\quad\text{and}\quad
		r_\varepsilon := \!\max_{s \in [0,R_0]}\left\{\max_{v \in B_s^\infty(\globmin)}\CE(v) \leq q_\varepsilon\right\}\!.
	\end{align}
	Moreover, $p$ is as given in \eqref{eq:def_p_memory} in Proposition~\ref{lem:lower_bound_probability_memoryCBO} with $B=c\left(\vartheta,\chi_1,\lambda_1,\sigma_1\right)\sqrt{\CV(\rho_0)}$ in $C_\Upsilon$ and with $r=r_\varepsilon$.
	By construction, $q_\varepsilon>0$ and $r_\varepsilon\leq R_0$.
	Furthermore, recalling the notation $\CE_{r}=\sup_{v \in B_{r}^\infty(\globmin)}\CE(v)$ from Proposition~\ref{lem:laplace_quant}, we have $q_\varepsilon+\CE_{r_\varepsilon} \leq 2q_\varepsilon \leq \CE_{\infty}$ according to the definition of~$r_\varepsilon$.
	Since $q_\varepsilon>0$, the continuity of $\CE$ ensures that there exists $s_{q_\varepsilon}>0$ such that $\CE(v)\leq q_\varepsilon$ for all $v\in B_{s_{q_\varepsilon}}^\infty(\globmin)$, yielding also $r_\varepsilon>0$.
	
	Let us now define the time horizon $T_\alpha \geq 0$ by
	\begin{align} \label{eq:endtime_T}
		T_\alpha := \sup\big\{t\geq0 : \CV(\rho_{t'}) > \varepsilon \text{ and } \N{\conspoint{\rho_{Y,t'}}-\globmin}_2 < C(t') \text{ for all } t' \in [0,t]\big\}
	\end{align}
	with $C(t):=c\left(\vartheta,\chi_1,\lambda_1,\sigma_1\right)\sqrt{\CV(\rho_t)}$.
	Notice for later use that $C(0)=B$.
	
	Our aim now is to show that $\CV(\rho_{T_\alpha}) = \varepsilon$ with $T_\alpha\in\big[\frac{(1-\vartheta)\chi_1}{(1+\vartheta/2)\chi_2}T^*,T^*\big]$ and that we have at least exponential decay of $\CV(\rho_{t})$ until time $T_\alpha$, i.e., until the accuracy $\varepsilon$ is reached. 
	
	First, however, we verify that $T_\alpha>0$, which is due to the continuity of $t\mapsto\CV(\rho_{t})$ and~$t\mapsto\N{\conspoint{\rho_{Y,t}}-\globmin}_2$ since $\CV(\rho_{0}) > \varepsilon$ and $\N{\conspoint{\rho_{Y,0}}-\globmin}_2 < C(0)$ at time $0$.
	While the former is a consequence of the assumption, the latter follows from Proposition~\ref{lem:laplace_quant} with $q_\varepsilon$ and $r_\varepsilon$ as defined in \eqref{eq:q_and_r_memory}, which allows to show that
	\begin{align*} 
		\N{\conspoint{\rho_{Y,0}} - \globmin}_2
		&\leq \frac{\sqrt{d}\left(q_\varepsilon\!+\!\CE_{r_\varepsilon}\right)^\nu}{\eta} \!+\! \frac{\sqrt{d}\exp\left(-\alpha q_\varepsilon\right)}{\rho_{Y,0}\big(B_{r_\varepsilon}^\infty(\globmin)\big)}\int\N{y-\globmin}_2d\rho_{Y,0}(y)\\
		&\leq \frac{\sqrt{d}\left(q_\varepsilon\!+\!\CE_{r_\varepsilon}\right)^\nu}{\eta} \!+\! \frac{\sqrt{d}\exp\left(-\alpha q_\varepsilon\right)}{\rho_{Y,0}\big(B_{r_\varepsilon}^\infty(\globmin)\big)}\iint\N{y-x}_2\!+\!\N{x-\globmin}_2d\rho_{0}(x,y)\\
		&\leq \frac{c\left(\vartheta,\chi_1,\lambda_1,\sigma_1\right)\sqrt{\varepsilon}}{2} \!+\! \frac{2\sqrt{d}\exp\left(-\alpha q_\varepsilon\right)}{\rho_{Y,0}\big(B_{r_\varepsilon}^\infty(\globmin)\big)}\sqrt{\CV(\rho_0)}\\
		&\leq c\left(\vartheta,\chi_1,\lambda_1,\sigma_1\right)\sqrt{\varepsilon}
		< c\left(\vartheta,\chi_1,\lambda_1,\sigma_1\right)\sqrt{\CV(\rho_0)} = C(0).
	\end{align*}
	The first inequality in the last line holds by the choice of $\alpha$ in \eqref{eq:alpha} and by noticing that $\Omega_{r_\varepsilon/2}\subset\bbR^d\times B_{r_\varepsilon}^\infty(\globmin)$ and thus $\rho_0(\Omega_{r_\varepsilon/2})\leq\rho_{Y,0}\big(B_{r_\varepsilon}^\infty(\globmin)\big)$.

	Next, we show that the functional $\CV(\rho_t)$ is sandwiched between two exponentially decaying functions with rates $(1-\vartheta)\chi_1$ and $(1+\vartheta/2)\chi_2$, respectively.
	More precisely, we prove that, up to time $T_\alpha$, $\CV(\rho_t)$ decays
	\begin{enumerate}[label=(\roman*),labelsep=10pt,leftmargin=35pt]
		\item at least exponentially fast (with rate $(1-\vartheta)\chi_1)$, and \label{item:proof:thm:global_convergence_main:exponential_decay_1}
		\item at most exponentially fast (with rate $(1+\vartheta/2)\chi_2)$. \label{item:proof:thm:global_convergence_main:exponential_decay_2}
	\end{enumerate}

	To obtain \ref{item:proof:thm:global_convergence_main:exponential_decay_1}, recall that Corollary~\ref{cor:evolution_of_functionalV} provides an upper bound on the time derivative of $\CV(\rho_t)$ given by
	\begin{equation} \label{eq:proof:thm:global_convergence_main:evolutionV}
	\begin{split}
		\frac{d}{dt} \CV(\rho_t)
		&\leq 
			-\chi_1 \CV(\rho_t)
			+ 2\sqrt{2}\left(\lambda_1 + \sigma_1^2\right) \sqrt{\CV(\rho_t)}\N{\conspoint{\rho_{Y,t}} - \globmin}_2
			+ \sigma_1^2\N{\conspoint{\rho_{Y,t}} - \globmin}_2^2
	\end{split}
	\end{equation}
	with $\chi_1$ as in~\eqref{eq:chi_1} being strictly positive by assumption.	
	By combining \eqref{eq:proof:thm:global_convergence_main:evolutionV} and the definition of $T_\alpha$ in \eqref{eq:endtime_T}, we have by construction
	\begin{align*}
		\frac{d}{dt}\CV(\rho_t)
		\leq -(1-\vartheta)\chi_1\CV(\rho_t)
		\quad \text{ for all } t \in (0,T_\alpha).
	\end{align*}
	Analogously, for \ref{item:proof:thm:global_convergence_main:exponential_decay_2}, by Corollary~\ref{cor:evolution_of_functionalV_lower}, we obtain a lower bound on the time derivative of $\CV(\rho_t)$ given by
	\begin{equation} \label{eq:proof:thm:global_convergence_main:evolutionV_lower}
	\begin{split}
		\frac{d}{dt} \CV(\rho_t)
		&\geq 
			-\chi_2 \CV(\rho_t)
			- 2\sqrt{2}\left(\lambda_1 + \sigma_1^2\right) \sqrt{\CV(\rho_t)}\N{\conspoint{\rho_{Y,t}} - \globmin}_2 \\
		&\geq 
			-(1+\vartheta/2)\chi_2 \CV(\rho_t)
			\quad \text{ for all } t \in (0,T_\alpha),
	\end{split}
	\end{equation}
	where the second inequality again exploits the definition of $T_\alpha$.
	Gr\"onwall's inequality now implies for all $t \in [0,T_\alpha]$ the upper and lower estimates
	\begin{subequations}
	\begin{align}
		\CV(\rho_t)
		&\leq \CV(\rho_0) \exp\left(-(1-\vartheta)\chi_1 t\right), \label{eq:evolution_V}\\
		\CV(\rho_t)
		&\geq \CV(\rho_0) \exp\left(-(1+\vartheta/2)\chi_2 t\right), \label{eq:evolution_V_lower}
	\end{align}
	\end{subequations}
	thereby proving \ref{item:proof:thm:global_convergence_main:exponential_decay_1} and \ref{item:proof:thm:global_convergence_main:exponential_decay_2}.
	The definition of $T_\alpha$ together with the one of $C(t)$ permits to control
	\begin{align}
		\max_{t \in [0,T_\alpha]}\N{\conspoint{\rho_{Y,t}}-\globmin}_2
		\leq \max_{t \in [0,T_\alpha]} C(t)\leq C(0).
		\label{eq:max_bound_distance}
	\end{align}
	To conclude it remains to prove $\CV(\rho_{T_\alpha}) = \varepsilon$ with $T_\alpha\in\big[\frac{(1-\vartheta)\chi_1}{(1+\vartheta/2)\chi_2}T^*,T^*\big]$.
	To this end, we consider the following three cases separately.
	
	\noindent
	\textbf{Case $T_\alpha \geq T^*$:}
	If $T_\alpha \geq T^*$, the time-evolution bound of $\CV(\rho_t)$ from \eqref{eq:evolution_V} combined with the definition of $T^*$ in \eqref{eq:end_time_star_statement} allows to immediately infer $\CV(\rho_{T^*}) \leq \varepsilon$.
	Therefore, with $\CV(\rho_{t})$ being continuous, $\CV(\rho_{T_\alpha}) = \varepsilon$ and $T_\alpha = T^*$ according to the definition of $T_\alpha$ in \eqref{eq:endtime_T}.
	
	\noindent
	\textbf{Case $T_\alpha < T^*$ and $\CV(\rho_{T_\alpha}) \leq \varepsilon$:}
	By continuity of $\CV(\rho_t)$, it holds for $T_\alpha$ as defined in \eqref{eq:endtime_T}, $\CV(\rho_{T_\alpha}) = \varepsilon$.
	Thus, $\varepsilon = \CV(\rho_{T_\alpha}) \geq \CV(\rho_0) \exp\left(-(1+\vartheta/2)\chi_2 T_\alpha\right)$ as a consequence of the time-evolution bound \eqref{eq:evolution_V_lower}. The latter can be reordered as
	\begin{equation*}
		\frac{(1-\vartheta)\chi_1}{(1+\vartheta/2)\chi_2}T^*
		= \frac{1}{(1+\vartheta/2)\revised{\chi_2}}\log\left(\frac{\CV(\rho_0)}{\varepsilon}\right)
		\leq T_\alpha
		< T^*.
	\end{equation*}
	
	\noindent
	\textbf{Case $T_\alpha < T^*$ and $\CV(\rho_{T_\alpha}) > \varepsilon$:}
	We will prove that this case can actually not occur by showing that $\N{\conspoint{\rho_{Y,T_\alpha}}-\globmin}_2 < C(T_\alpha)$ for the $\alpha$ chosen in~\eqref{eq:alpha}.
	In fact, if both $\CV(\rho_{T_\alpha})>\varepsilon$ and $\N{\conspoint{\rho_{Y,T_\alpha}}-\globmin}_2 < C(T_\alpha)$ held true simultaneously, this would contradict the definition of $T_\alpha$ in \eqref{eq:endtime_T}.
	To obtain this contradiction we apply again Proposition~\ref{lem:laplace_quant} with $q_\varepsilon$ and $r_\varepsilon$ as before to get
	\begin{align} \label{eq:proof_contradiction_1}
	\begin{split}
		\N{\conspoint{\rho_{Y,T_\alpha}}-\globmin}_2
		&\leq \frac{\sqrt{d}\left(q_\varepsilon\!+\!\CE_{r_\varepsilon}\right)^\nu}{\eta} \!+\! \frac{\sqrt{d}\exp\left(-\alpha q_\varepsilon\right)}{\rho_{Y,T_\alpha}\big(B_{r_\varepsilon}^\infty(\globmin)\big)}\int\N{y-\globmin}_2d\rho_{Y,T_\alpha}(y)\\
		&\leq \frac{\sqrt{d}\left(q_\varepsilon\!+\!\CE_{r_\varepsilon}\right)^\nu}{\eta} \!+\! \frac{\sqrt{d}\exp\left(-\alpha q_\varepsilon\right)}{\rho_{Y,T_\alpha}\big(B_{r_\varepsilon}^\infty(\globmin)\big)}\!\iint\!\N{y-x}_2\!+\!\N{x-\globmin}_2d\rho_{T_\alpha}(x,y)\\
		&\leq \frac{c\left(\vartheta,\chi_1,\lambda_1,\sigma_1\right)\sqrt{\varepsilon}}{2} \!+\! \frac{2\sqrt{d}\exp\left(-\alpha q_\varepsilon\right)}{\rho_{Y,T_\alpha}\big(B_{r_\varepsilon}^\infty(\globmin)\big)}\sqrt{\CV(\rho_{T_\alpha})}\\
		&< \frac{c\left(\vartheta,\chi_1,\lambda_1,\sigma_1\right)\sqrt{\CV(\rho_{T_\alpha})}}{2} \!+\! \frac{2\sqrt{d}\exp\left(-\alpha q_\varepsilon\right)}{\rho_{Y,T_\alpha}\big(B_{r_\varepsilon}^\infty(\globmin)\big)}\sqrt{\CV(\rho_{T_\alpha})}.
	\end{split}
	\end{align}
	Since, thanks to \eqref{eq:max_bound_distance}, we have $\max_{t \in [0,T_\alpha]}\Nnormal{\conspoint{\rho_{Y,t}}-\globmin}_2 \leq B$ for $B=C(0)$, which in particular does not depend on $\alpha$, Proposition~\ref{lem:lower_bound_probability_memoryCBO} guarantees the existence of $p>0$ independent of $\alpha$ (but dependent on $B$ and $r_\varepsilon$) with
	\begin{align*}
		\rho_{Y,T_\alpha}(B_{r_\varepsilon}^\infty(\globmin))
		&\geq \left(\iint \phi_{r_\varepsilon}(x,y) \,d\rho_0(x,y)\right)\exp(-pT_\alpha) \\
		&\geq \frac{1}{2^d}\,\rho_0\big(\Omega_{r_\varepsilon/2}\big) \exp(-pT^*) 
		> 0.
	\end{align*}
	Here we use that $(\globmin,\globmin)\in\supp(\rho_0)$ to bound the initial mass $\rho_0$ and the fact that $\phi_{r}$ from Lemma~\ref{lem:properties_mollifier} is bounded from below on $\Omega_{r/2}$ by $1/2^d$.
	With this we can continue the chain of inequalities in~\eqref{eq:proof_contradiction_1} to obtain
	\begin{align*} 
	\begin{split}
		\N{\conspoint{\rho_{Y,T_\alpha}}-\globmin}_2
		&< \frac{c\left(\vartheta,\chi_1,\lambda_1,\sigma_1\right)\sqrt{\CV(\rho_{T_\alpha})}}{2} + \frac{2^{d+1}\sqrt{d}\exp\left(-\alpha q_\varepsilon\right)}{\rho_0\big(\Omega_{r_\varepsilon/2}\big) \exp(-pT^*)}\sqrt{\CV(\rho_{T_\alpha})}\\
		&\leq c\left(\vartheta,\chi_1,\lambda_1,\sigma_1\right)\sqrt{\CV(\rho_{T_\alpha})}
		= C(T_\alpha),
	\end{split}
	\end{align*}
	with the first inequality in the last line holding due to the choice of $\alpha$ in \eqref{eq:alpha}.
	This gives the desired contradiction, again thanks to the continuity of $t\mapsto\CV(\rho_{t})$ and~$t\mapsto\N{\conspoint{\rho_{Y,t}}-\globmin}_2$.
\end{proof}}

\section{Numerical Experiments} \label{sec:numerics}

In the first part of this section we comment on how to efficiently implement a numerical scheme for the CBO dynamics~\eqref{eq:CBO_micro_with_memory} which allows to integrate memory mechanisms without additional computational complexity.
Afterwards we numerically demonstrate the benefit of memory effects and gradient information at the example of interesting real-world inspired applications.

\subsection{Implementational Aspects} \label{sec:implementationalaspects}

Discretizing the interacting particle system~\eqref{eq:CBO_micro_with_memory} in time by means of the Euler-Maruyama method~\cite{higham2001algorithmic} with prescribed time step size~$\Delta t$ results in the implementable numerical scheme
\begin{subequations} \label{eq:CBO_discrete_with_memory}
\begin{align}
	&\begin{aligned} \label{eq:CBO_discrete_with_memory_X}
	\!X_{k+1}^i = \begin{aligned}[t] 
		 &X_{k}^i-\Delta t\lambda_1\!\left(X_{k}^i-y_{\alpha}(\empmeasure{Y,k})\right)
		    -\Delta t\lambda_2\!\left(X_{k}^i-Y_{k}^i\right)
		    -\Delta t\lambda_3\nabla\CE(X_{k}^i) \\
		 &\!\quad\,\,\,+\sigma_1 D\!\left(X_{k}^i-y_{\alpha}(\empmeasure{Y,k})\right) B_{k}^{1,i}
		    +\sigma_2 D\!\left(X_{k}^i-Y_{k}^i\right) B_{k}^{2,i}
		    +\sigma_3 D\!\left(\nabla\CE(X_{k}^i)\right) B_{k}^{3,i},
		\end{aligned}
	\end{aligned}\\
	&\mspace{1mu}Y_{k+1}^i = Y_{k}^i + \Delta t\kappa \left(X_{k+1}^i-Y_{k}^i\right) S^{\beta,\theta}\!\left(X_{k+1}^i, Y_{k}^i\right),
	\label{eq:CBO_discrete_with_memory_Y}
\end{align}
\end{subequations}
where $((B_{k}^{m,i})_{k=0,\dots,K-1})_{i=1,\dots,N}$ are independent, identically distributed Gaussian random vectors in  $\bbR^d$ with zero mean and covariance matrix $\Delta t \Id$ for $m\in\{1,2,3\}$.

We notice that, compared to standard CBO, see, e.g., \cite[Equation~(2)]{fornasier2021consensus}, the way the historical best position is updated in~\eqref{eq:CBO_discrete_with_memory_Y} (recall the definition of $S^{\beta,\theta}$ from Equation~\eqref{eq:S_beta}) requires one additional evaluation of the objective function per particle in each time step, which raises the computational complexity of the numerical scheme substantially if computing~$\CE$ is costly and the dominating part.
However, for the parameter choices~$\kappa=1/\Delta t$, $\theta=0$ and $\beta=\infty$, in place of~\eqref{eq:CBO_discrete_with_memory_Y}, we obtain the update rule
\begin{align} \label{eq:historical_best_efficient}
	Y_{k+1}^i = \begin{cases}
		X_{k+1}^i,	&\quad\text{if } \CE(X_{k+1}^i) < \CE(Y_{k}^i),\\
		Y_{k}^i,	&\quad\text{else},
	\end{cases}
\end{align}
which is how one expects a memory mechanism to be implemented.
This way allows to recycle in time step~$k$ the computations made in the previous step, and thus leads to no additional computational cost as consequence of using memory effects.
The memory consumption, on the other hand, is approximately twice as high as in standard CBO.

\subsection{A Benchmark Problem in Optimization\,---\,The Rastrigin Function} \label{sec:numerics:Rastrigin}

Let us validate in this section the numerical observation made in Figure~\ref{fig:benefits_memory} in the introduction about the benefit of memory effects.
Namely, it has been observed in several prior works that a higher noise level can enhance the success of CBO.
To rule out that the improved performance for~$\lambda_2>0$ in Figure~\ref{fig:benefits_memory} originates solely from the larger present noise as consequence of the additional noise term associated with the memory drift, we replicate in Figure~\ref{fig:benefits_memory_validation} the experiments with the exception of setting~$\sigma_2=0$.
\begin{figure}[htp!] 
	\centering
        \includegraphics[trim=260 220 264 238,clip,height=0.2\textheight]{CBO_Rastrigin_lambda2_forMemory.pdf}%
        \vspace{0.06cm}
        \includegraphics[trim=83 220 65 238,clip,height=0.2\textheight]{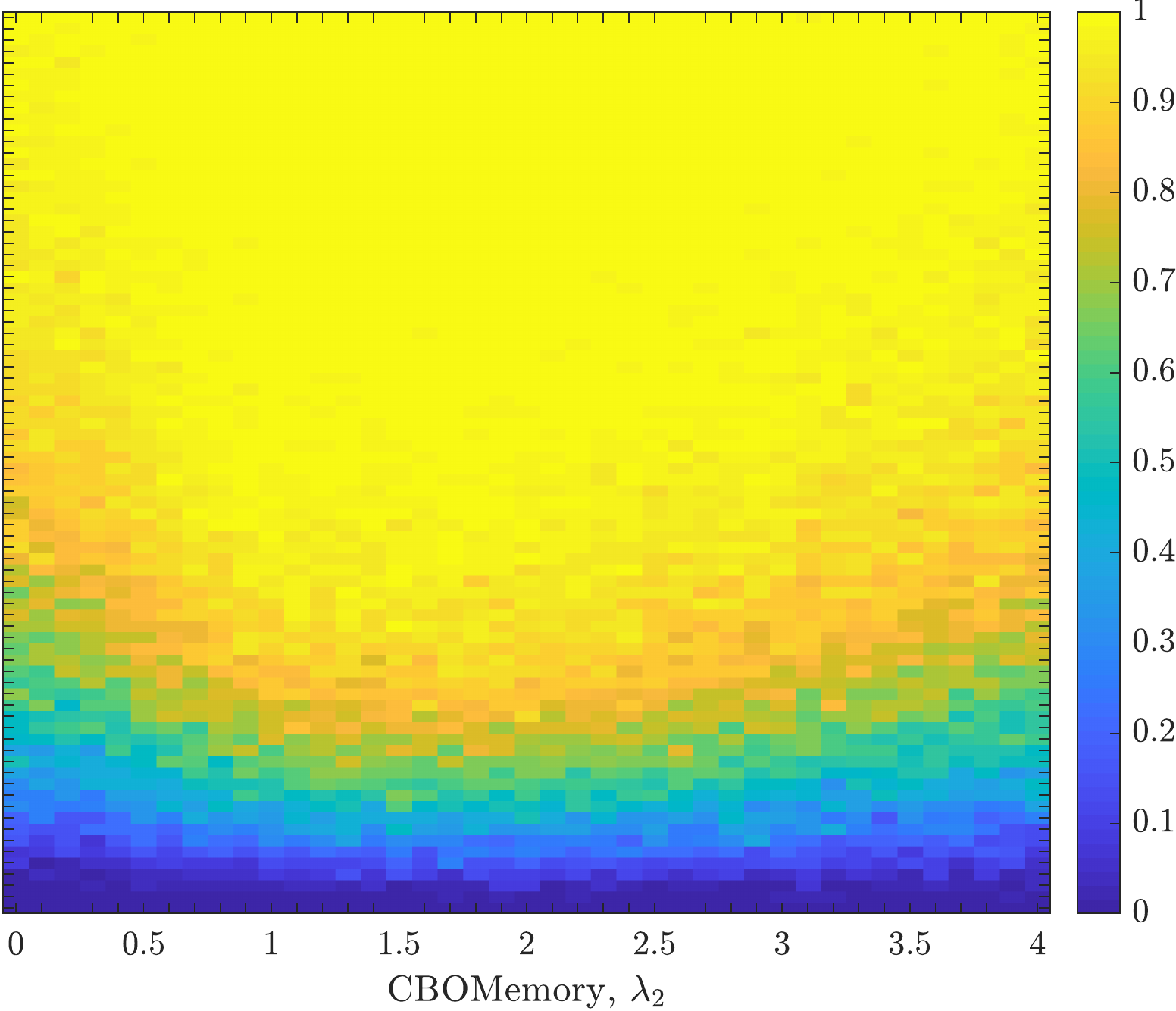}
	\caption{\footnotesize Success probability of CBO without (left separate column) and with memory effects for different values of the parameter~$\lambda_2\in[0,4]$ (right phase diagram) when optimizing the Rastrigin function in dimension~$d=4$ in the setting of Figure~\ref{fig:benefits_memory} with the exception of setting~$\sigma_2=0$.
	In this way we validate that the presence of memory effects is responsible for the improved performance and not just a higher noise level.}
	\label{fig:benefits_memory_validation}
\end{figure}
The obtained results confirm that already the usage of memory effects together with a memory drift improves the performance.
However, we also notice that an additional noise term further increases the success probability.

\subsection{A Machine Learning Example} \label{sec:numerics:NN}

As a first real-world inspired application we now investigate the influence of memory mechanisms in a high-dimensional benchmark problem in machine learning, which is well-understood in the literature, namely the training of a shallow and a convolutional NN~(CNN) classifier for the MNIST dataset of handwritten digits~\cite{MNIST}.

The experimental setting is the one of \cite[Section~4]{fornasier2021convergence} with tested architectures as described in Figure~\ref{fig:architectures}.
\begin{figure}[htp!]
	\centering
	\subcaptionbox{\label{fig:shallowNN} Shallow NN with one dense layer}{\vspace{1.01em}\includegraphics[width=0.24\textwidth, trim=0 0 0 0,clip]{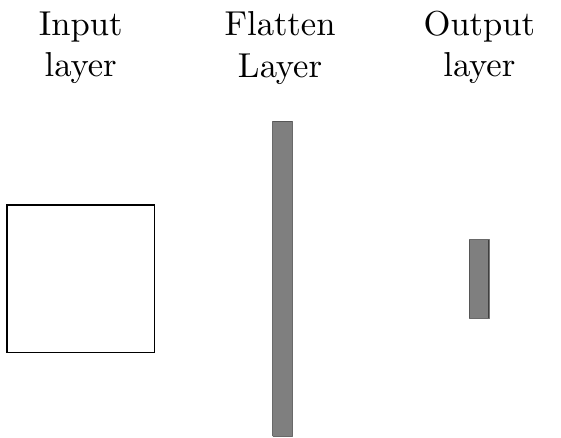}}
	\hspace{2em}
	\subcaptionbox{\label{fig:CNN} CNN~(LeNet-1), cf.\@~\cite[Section III.C.7]{lecun1998gradient}, with two convolutional and two pooling layers, and one dense layer}{\includegraphics[width=0.654\textwidth, trim=0 0 0 0,clip]{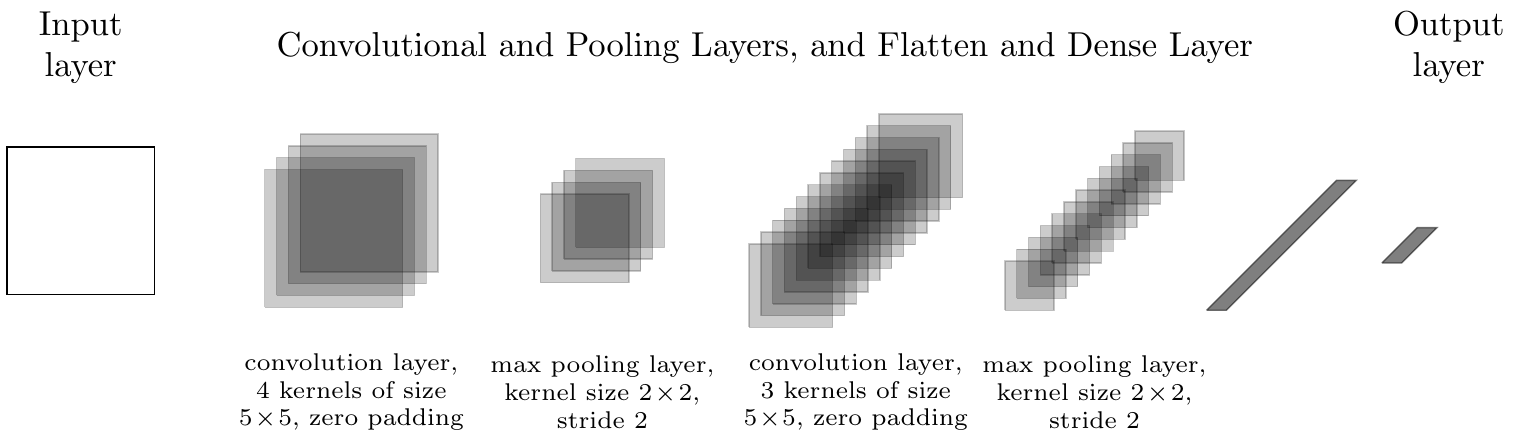}}
	\caption{NN architectures used in the experiments of Section~\ref{sec:numerics:NN}.
	Images are represented as $28\times28$ matrices with entries in $[0,1]$. For the shallow NN in \textbf{(a)} the input is reshaped into a vector~$x\in\bbR^{728}$ which is then passed through a dense layer of the form~$\mathrm{ReLU}(Wx+b)$ with trainable weights~$W\in\bbR^{10\times728}$ and bias~$b\in\bbR^{10}$.
	The learnable parameters of the CNN in \textbf{(b)} are the kernels and the final dense layer.
	Both networks include a batch normalization step after each $\mathrm{ReLU}$ activation function and a softmax activation in the last layer in order to be able to interpret the output as a probability distribution over the digits.
	We denote the trainable parameters of the NN by $\theta$. The shallow NN has $7850$ and the CNN~$2112$.
	(Reprinted by permission from Springer Nature Customer Service Centre GmbH: Springer Nature, \textit{Applications of Evolutionary Computation}, Convergence of Anisotropic Consensus-Based Optimization in Mean-Field Law, M.\@~Fornasier, T.\@~Klock, K.\@~Riedl, \copyright\,2022.)}
	\label{fig:architectures}
\end{figure}
While it is not our aim to challenge the state of the art at this task by employing very sophisticated architectures, we demonstrate that CBO is on par with stochastic gradient descent without requiring time-consuming hyperparameter tuning.

To train the learnable parameters~$\theta$ of the NNs we minimize the empirical risk~$\CE(\theta) = \frac{1}{M} \sum_{j=1}^M \ell(f_\theta(x^j),y^j)$, where $f_\theta$ denotes the forward pass of the NN and $(x^j,y^j)_{j=1}^M$ the $M$ training samples consisting of image and label.
As loss~$\ell$ we choose the categorical crossentropy loss $\ell(\widehat{y},y)=-\sum_{k=0}^9 y_k \log \left(\widehat{y}_k\right)$ with $\widehat{y}=f_\theta(x)$ denoting the output of the NN for a sample~$(x,y)$.

Our implementation is the one of \cite[Section~4]{fornasier2021convergence}, which includes concepts from \cite{carrillo2019consensus} and \cite[Section~2.2]{fornasier2020consensus_sphere_convergence}.
Firstly, mini-batching is employed when evaluating $\CE$ and when computing the consensus point~$\conspointnoarg$, which means that $\CE$ is evaluated on a random subset of size~$n_\CE=60$ of the training dataset and $\conspointnoarg$ is computed from a random subset of size~$n_N=10$ of all \revised{$N=100$} particles.
Secondly, a cooling strategy for $\alpha$ and the noise parameters is used.
More precisely, $\alpha$ is doubled each epoch, while $\sigma_1$ and $\sigma_2$ follow the schedule $\sigma_{i,epoch} = \sigma_{i,0}/\log_2(epoch+2)$ for $i=1,2$.

In Figure~\ref{fig:results_NN}
\begin{figure}[htp!]
	\centering
	\begin{subfigure}[b]{0.46\textwidth}
		\centering
		\begin{tikzpicture}
			\definecolor{green2}{rgb}{0.4660    0.6740    0.1880}
    		\node[anchor=south west,inner sep=0] (image) at (0,0) {\includegraphics[trim=28 247 8 266,clip,width=1\textwidth]{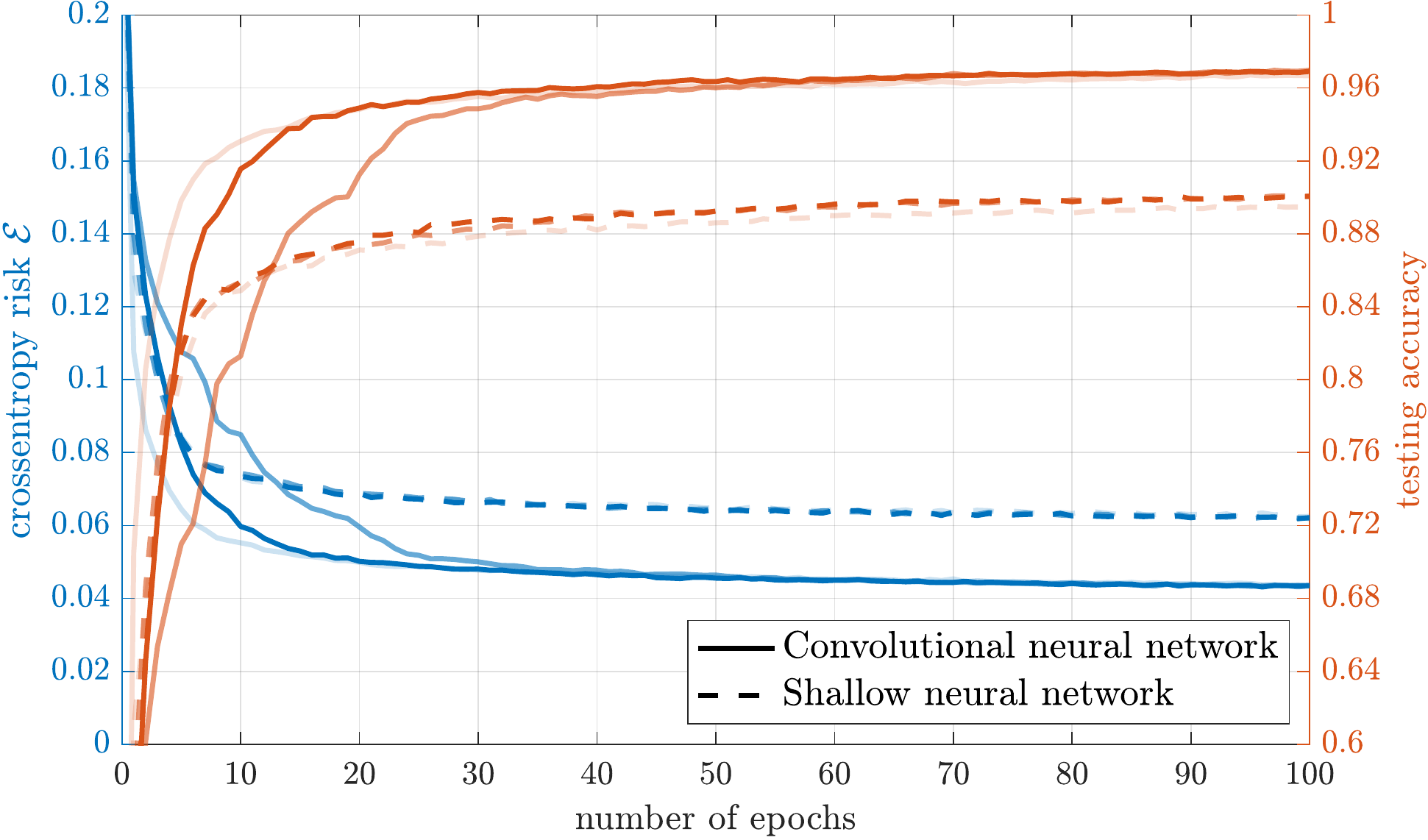}};
    		\begin{scope}[x={(image.south east)},y={(image.north west)}]
        		\draw[green2, thick] (0.833,0.8970) rectangle (0.9166,0.9270);
        		\draw[green2, thick] (0.833,0.7435) rectangle (0.9166,0.7735);
        		\draw[green2, thick] (0.833,0.3715) rectangle (0.9166,0.4015);
        		\draw[green2, thick] (0.833,0.2900) rectangle (0.9166,0.3200);
    		\end{scope}
		\end{tikzpicture}
		\caption{Testing accuracy and empirical risk plots for the shallow NN and the CNN when trained with CBO without memory effects (lightest lines), with memory effects but without memory drift (line with intermediate opacity) and with memory effects and memory drift (darkest lines)}
	\end{subfigure}~\hspace{1em}~
	\begin{subfigure}[b]{0.46\textwidth}
		\centering
		\includegraphics[trim=12 255 10 260,clip,width=0.45\textwidth]{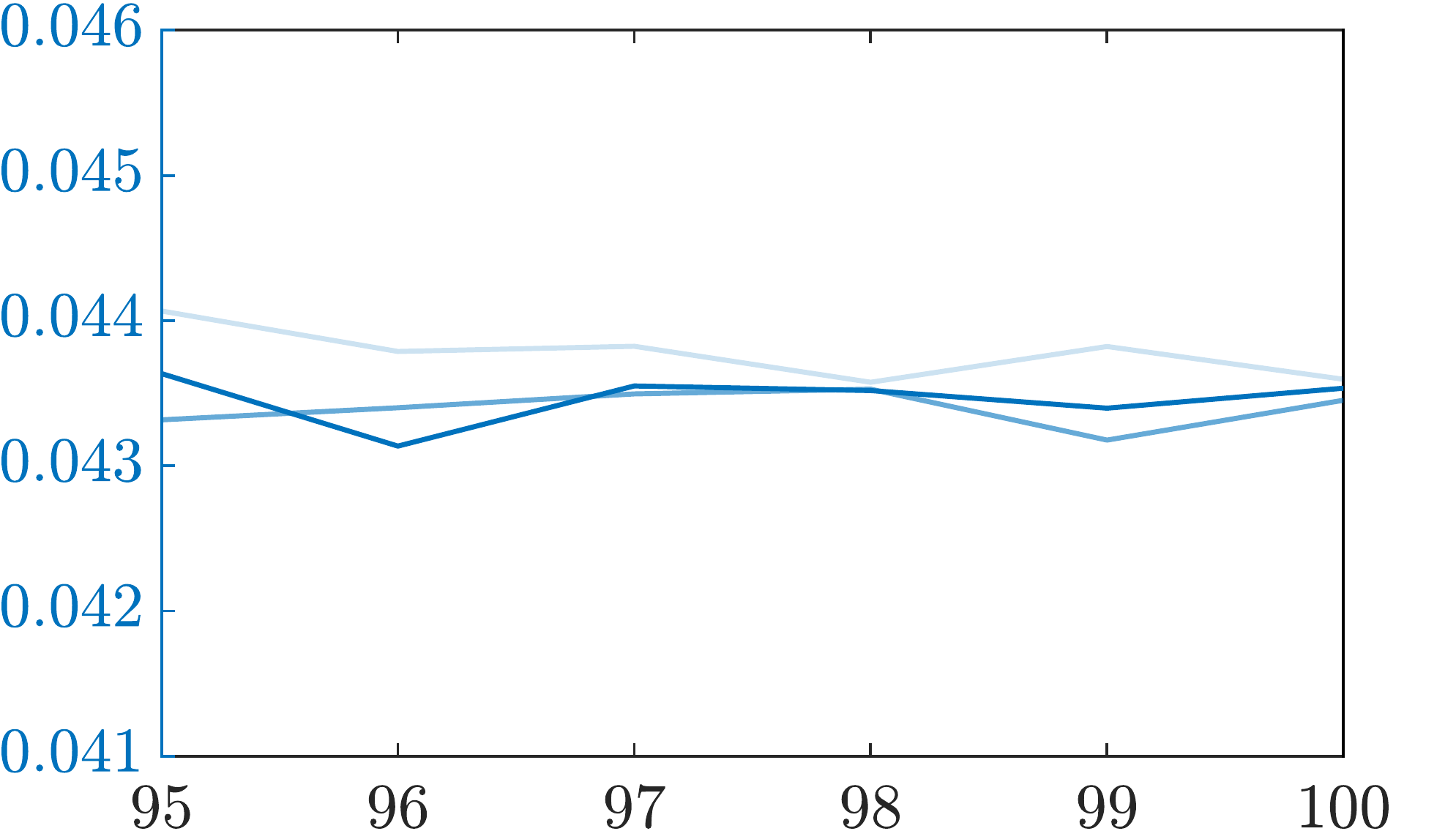}
		~\hspace{-1.4em}~
		\includegraphics[trim=12 255 10 260,clip,width=0.45\textwidth]{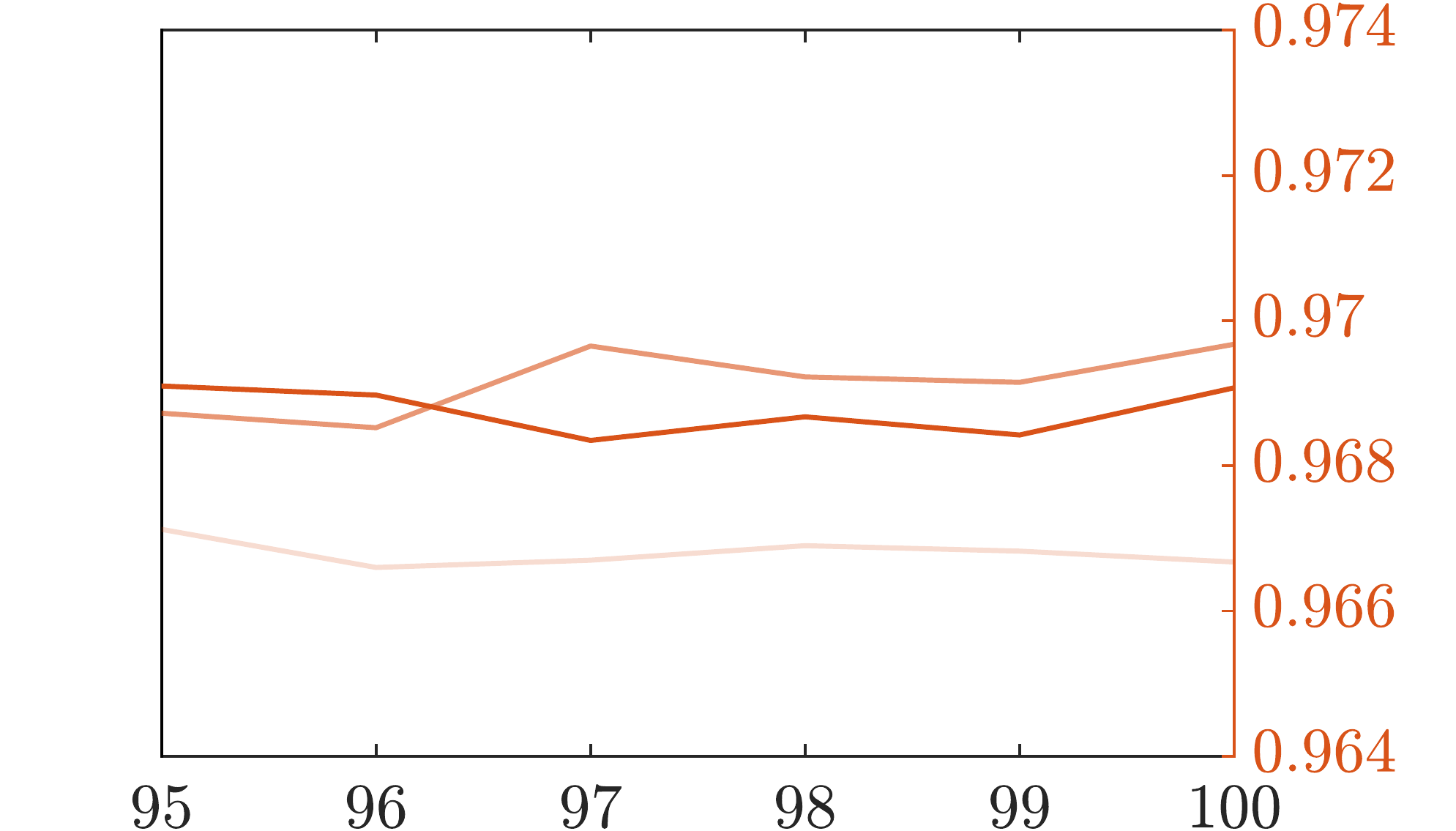}\\
		\vspace{0.63em}
		\includegraphics[trim=12 255 10 260,clip,width=0.45\textwidth]{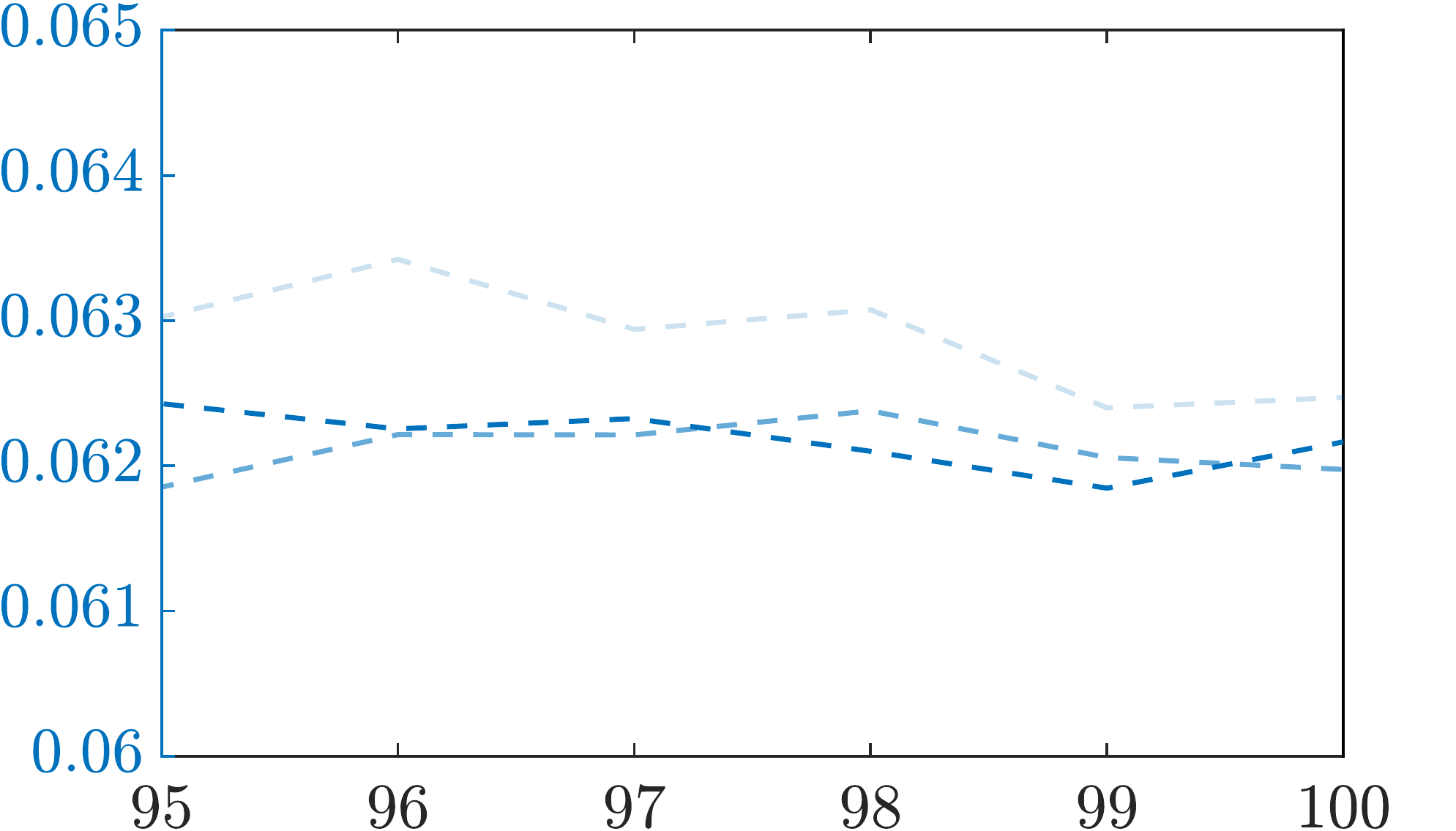}
		~\hspace{-1.4em}~
		\includegraphics[trim=12 255 10 260,clip,width=0.45\textwidth]{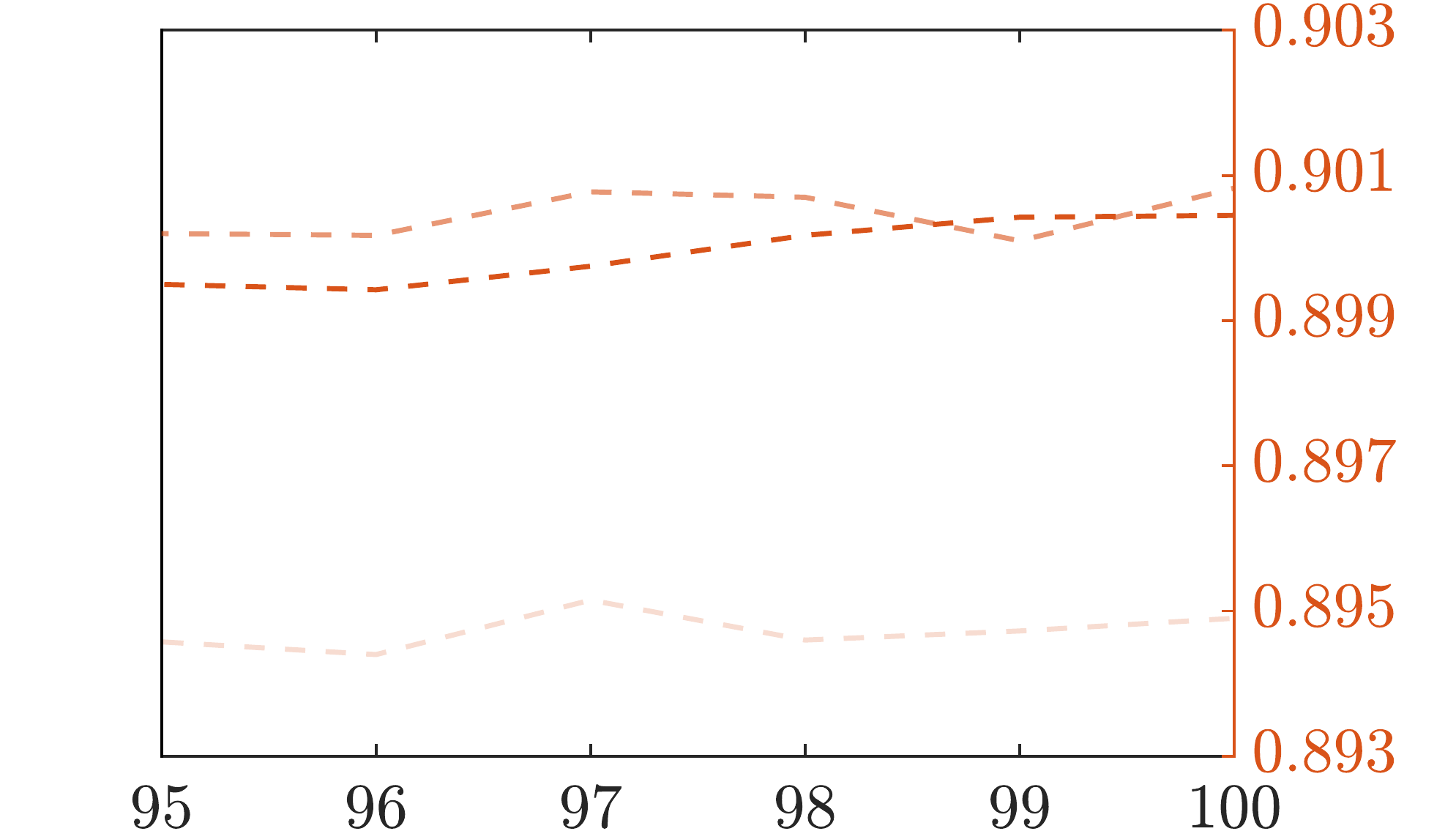}\vspace{0.75em}
		\caption{Zooming into the testing accuracies (right column) and the empirical risks (left column) during the final $5$ epochs (highlighted with green boxes in \textbf{(a)}) for the shallow NN (bottom row) and the CNN (top row)\\}
	\end{subfigure}
	\caption{Comparison of the performances (testing accuracy and training loss) of a shallow NN (dashed lines) and a CNN (solid lines) with architectures as described in Figure~\ref{fig:architectures},
	when trained with CBO without memory effects (lightest lines), 
	with memory effects but without memory drift (line with intermediate opacity) and
	with memory effects and memory drift (darkest lines).
	Depicted are the accuracies on a test dataset (orange lines) and the values of the objective function~$\CE$ evaluated on a random sample of the training set of size~$10000$ (blue lines).
	We observe that memory effects slightly improve the final accuracies while slowing down the training process initially.}
	\label{fig:results_NN}
\end{figure}
we report the testing accuracies and the training risks evaluated \revised{at the consensus point based} on a random sample of the training set of size $10000$ for both the shallow NN and the CNN when trained with one of three algorithms: standard CBO without memory effects as obtained when discretizing \cite[Equation~(1)]{fornasier2021convergence}, CBO with memory effects but without memory drift as in Equation~\eqref{eq:CBO_discrete_with_memory} with $\lambda_2=\sigma_2=0$, and CBO with memory effects and memory drift as in Equation~\eqref{eq:CBO_discrete_with_memory} with $\lambda_2=0.4$ and $\sigma_2=\lambda_2\sigma_1$.
The remaining parameters are $\lambda_1=1$, $\sigma_{1,0}=\sqrt{0.4}$, $\alpha_{initial}=50$, $\beta=\infty$, $\theta=0$, $\kappa=1/\Delta t$, and  discrete time step size $\Delta t=0.1$.
We train for $100$ epochs and use $N=100$ particles, which are initialized according to $\CN\big((0,\dots,0)^T,\Id\big)$.
All results are averaged over $5$ training runs.

As concluded already in \cite[Section~4]{fornasier2021convergence}, we obtain accuracies comparable to SGD, cf.\@~\cite[Figure~9]{lecun1998gradient}.
Moreover, we see slightly improved results when exploiting memory effects.
However, we also notice that memory mechanisms slow down the training process initially.

\subsection{A Compressed Sensing Example} \label{sec:numerics:CS}

In the final numerical section of this paper we showcase an application where gradient information turns out to be indispensable for the success of CBO methods, namely an experiment in compressed sensing~\cite{rauhut2013CS}, which has become a very active and profitable field of research since the seminal works~\cite{MR2230846, MR2241189} about two decades ago.

One of the most common problems encountered in engineering and technology is concerned with the inference of information about an unknown signal~$\globmin\in\bbR^d$ from (linear) measurements~$b\in\bbR^m$.
While classical linear algebra suggests that the number of measurements~$m$ must be at least as large as the dimensionality~$d$ of the signal, in many applications measurements are costly, time-consuming or both, making it desirable to reduce their number to the absolute minimum.
Very often one aims at $m\ll d$, since real-world signals usually live in high-dimensional spaces.
In general this would be an impossible task.
However, in typical practical scenarios additional information about the quantity of interest~$\globmin$ is available, which indeed allows to reconstruct signals from few measurements~$b$.
An empirically observed assumption about real-world signals is compressibility, meaning that they can be well-approximated by sparse vectors, i.e., vectors whose components are for the most part zero.
Exploiting sparsity enables us to solve the underdetermined system $A\globmin = b$ efficiently in both theory and practice.
Compressed sensing is concerned with the task of designing a measurement process~$A\in\bbR^{m\times d}$ together with a reconstruction algorithm capable of recovering the sparse solution $\globmin$ from the set of solutions consistent with the measurements.
This can be formulated as the nonconvex combinatorial optimization
\begin{equation*} 
	\min \N{x}_0
	\quad\text{subject to }
	Ax=b,
\end{equation*}
where $\N{x}_0$ is colloquially referred to as $\ell_0$-``norm'' and denotes the number of non-zero elements of $x$.
Solving $\ell_0$-minimization is however NP-hard in general, which lead researchers to consider the convex relaxation
\begin{equation} \label{eq:CS_objective_ell1}
	\min \N{x}_1
	\quad\text{subject to }
	Ax=b.
\end{equation}
$\ell_1$-minimization is easy to solve by means of established methods from convex optimization and provably recovers the correct solution for a suitable measurement matrix~$A$.
The remaining question is about the correct way of inferring information about the signal through measurements.
Remarkably and responsible for the wide success of compressed sensing is that random matrices enjoy properties such as the null space or restricted isometry property, which guarantee successful recovery, for $m\gtrsim s\log(d/s)$, where $s$ denotes the sparsity of the signal~$\globmin$, i.e., $s=\N{\globmin}_0$.
Up to the logarithmic factor in the ambient dimension~$d$, this is optimal, since in theory $m=2s$ measurements are necessary and sufficient to reconstruct every $s$-sparse vector.

In the numerical experiments following we resort to the regularized variant of the sparse recovery problem
\begin{equation} \label{eq:CS_objective}
	\min \ \CE(x) 
	\quad\text{with } \CE(x) = \frac{1}{2}\N{Ax-b}_2^2 + \mu\N{x}_p^p
\end{equation}
for a suitable regularization parameter~$\mu>0$.
For $p=1$ we obtain the regularization of~\eqref{eq:CS_objective_ell1}, whereas for $p<1$ the optimization~\eqref{eq:CS_objective} becomes nonconvex.
Our results in Figures~\ref{fig:benefits_gradient} and~\ref{fig:CS_results} show that CBO with gradient information is capable of solving the convex but also the nonconvex optimization problem~\eqref{eq:CS_objective} with $p=1/2$ with already very few measurements.
\begin{figure}[htp!]
	\centering
	\begin{subfigure}[b]{0.46\textwidth}
        \centering
        \includegraphics[trim=260 220 264 238,clip,height=0.2\textheight]{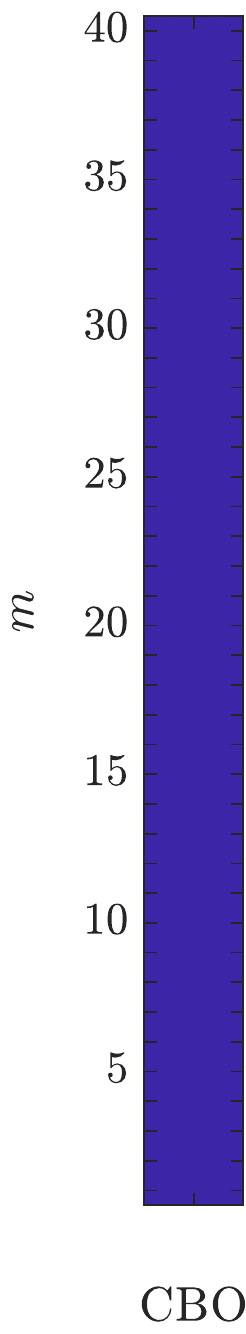}%
        \vspace{0.06cm}
        \includegraphics[trim=83 220 65 238,clip,height=0.2\textheight]{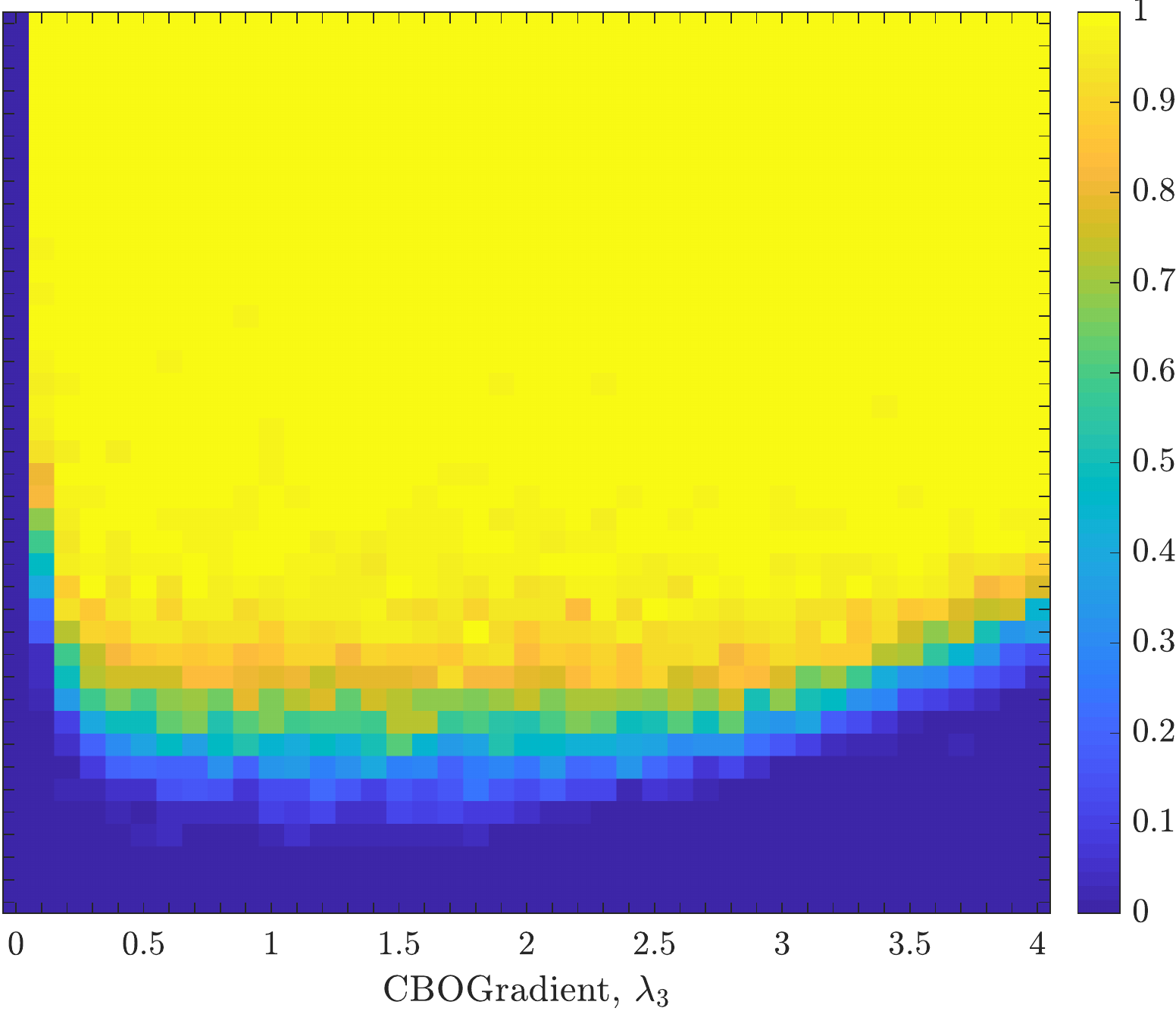}
        \caption{Convex sparse recovery using CBO without and with gradients with $N=10$ particles to solve~\eqref{eq:CS_objective} with $p=1$ from $m$ measurements}
    \end{subfigure}~\hspace{1em}~
	\begin{subfigure}[b]{0.46\textwidth}
        \centering
        \includegraphics[trim=260 220 264 238,clip,height=0.2\textheight]{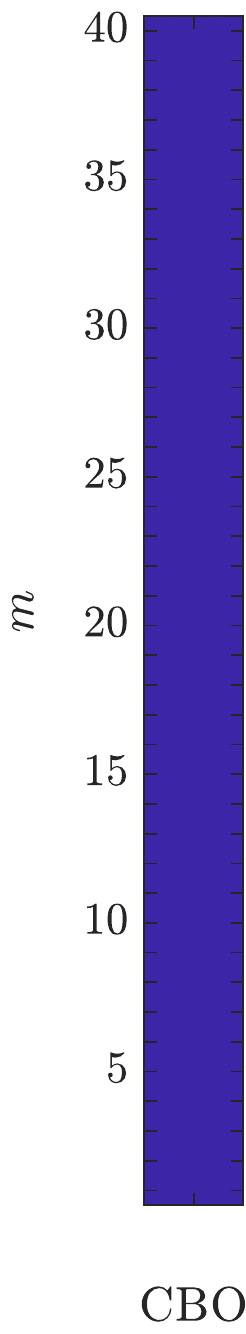}%
        \vspace{0.06cm}
        \includegraphics[trim=83 220 65 238,clip,height=0.2\textheight]{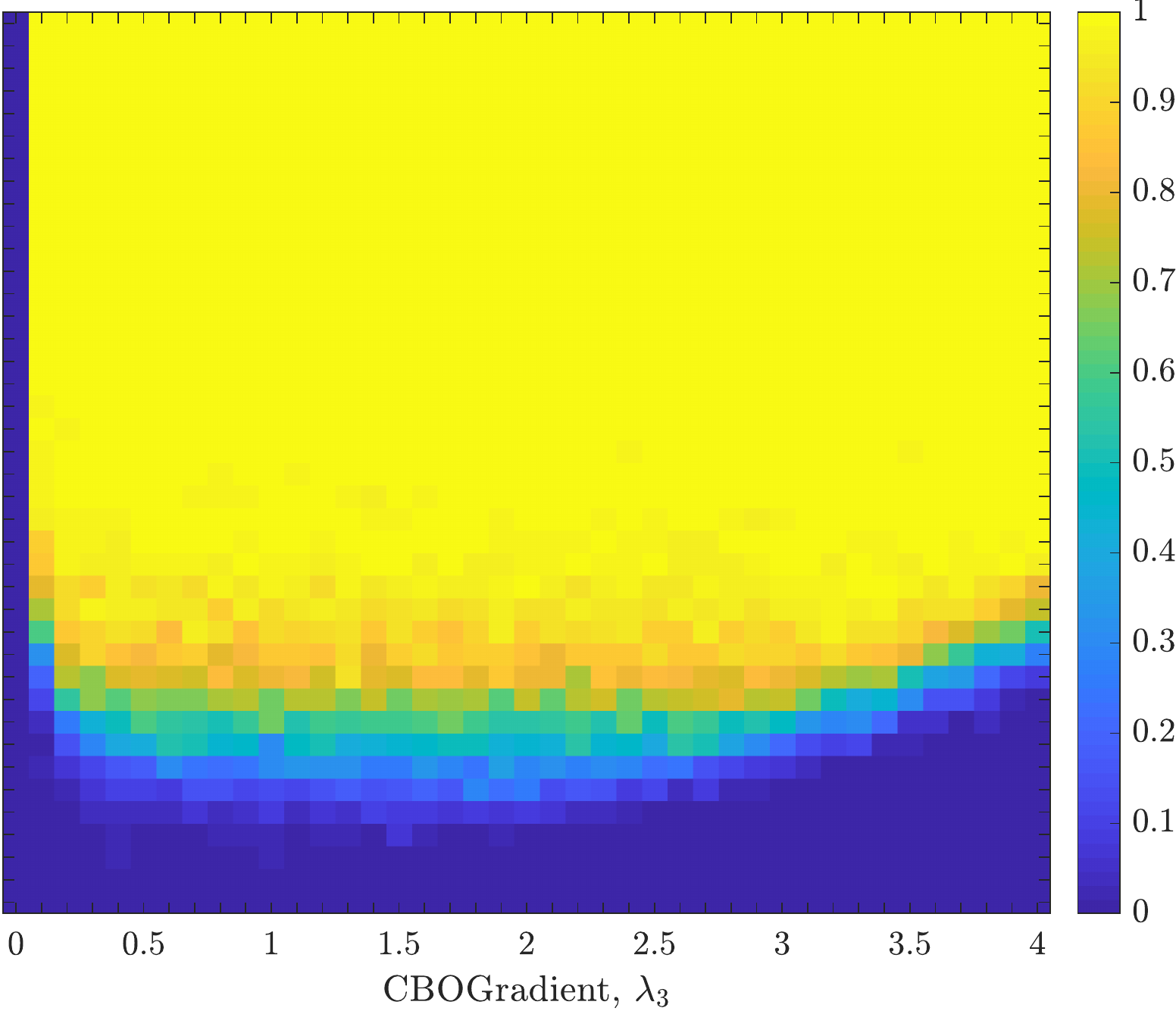}
        \caption{Convex sparse recovery using CBO without and with gradients with $N=100$ particles to solve~\eqref{eq:CS_objective} with $p=1$ from $m$ measurements}
    \end{subfigure}\\\vspace{0.24cm}
    \begin{subfigure}[b]{0.46\textwidth}
        \centering
        \includegraphics[trim=260 220 264 238,clip,height=0.2\textheight]{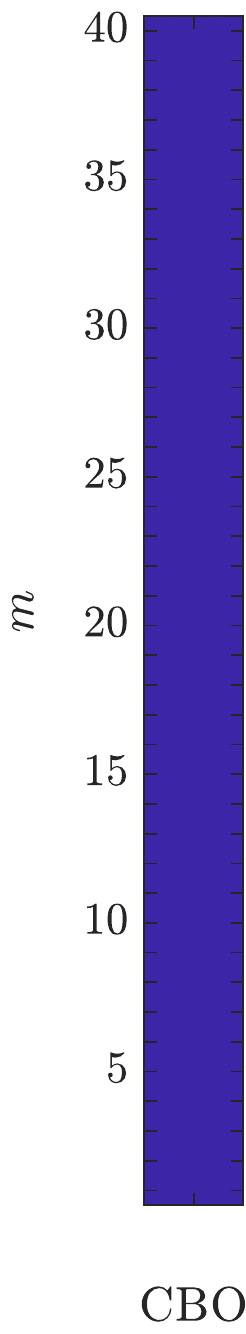}%
        \vspace{0.06cm}
        \includegraphics[trim=83 220 65 238,clip,height=0.2\textheight]{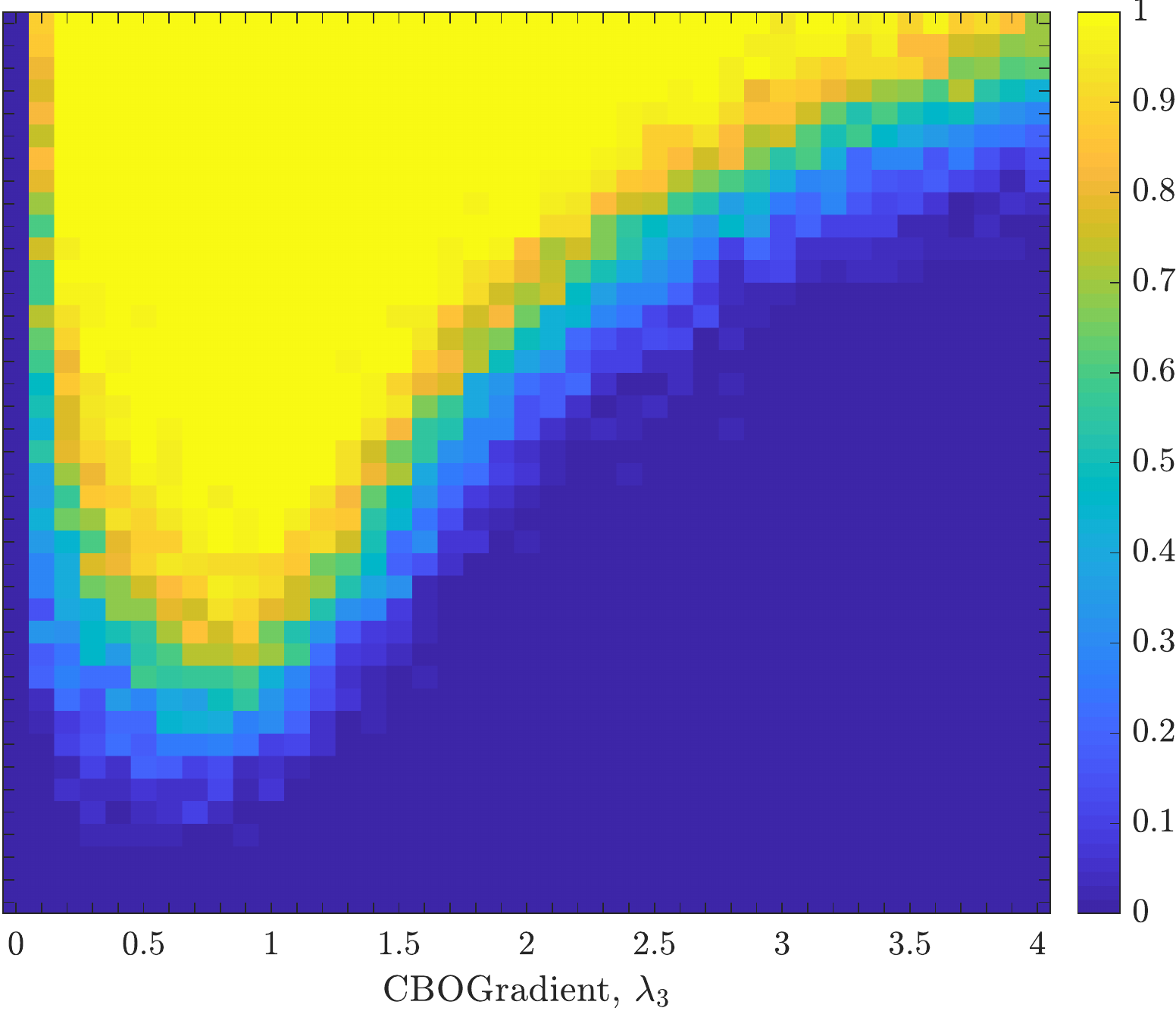}
        \caption{Nonconvex sparse recovery using CBO without and with gradients with $N=10$ particles to solve~\eqref{eq:CS_objective} with $p=0.5$ from $m$ measurements}
    \end{subfigure}~\hspace{1em}~
	\begin{subfigure}[b]{0.46\textwidth}
        \centering
        \includegraphics[trim=260 220 264 238,clip,height=0.2\textheight]{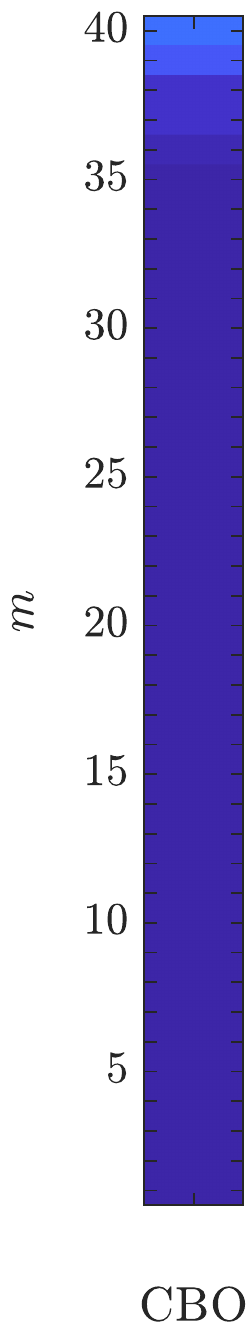}%
        \vspace{0.06cm}
        \includegraphics[trim=83 220 65 238,clip,height=0.2\textheight]{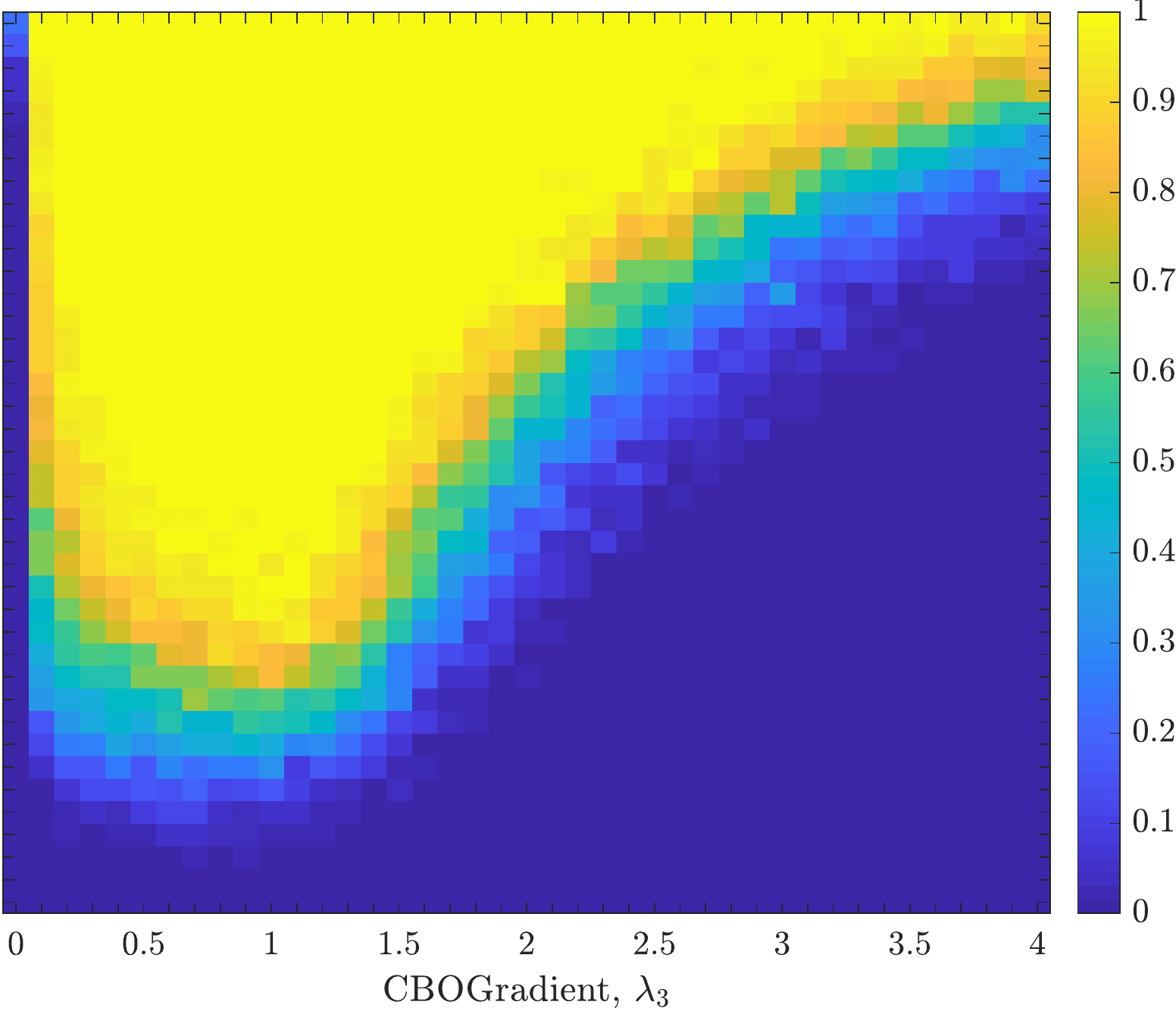}
        \caption{Nonconvex sparse recovery using CBO without and with gradients with $N=100$ particles to solve~\eqref{eq:CS_objective} with $p=0.5$ from $m$ measurements}
    \end{subfigure}
	\caption{\footnotesize 
	Comparison between the success probabilities of CBO without (left separate columns) and with gradient information for different values of the parameter~$\lambda_3\in[0,4]$ (right phase diagrams) with $N=10$ (\textbf{(a)} and \textbf{(c)}) or $N=100$ particles (\textbf{(b)} and \textbf{(d)}) when solving the convex or nonconvex $\ell_p$-regularized least squares problem~\eqref{eq:CS_objective} with $p=1$ and $\mu=$ (\textbf{(a)} and \textbf{(b)}) or $p=0.5$ and $\mu=$ (\textbf{(c)} and \textbf{(d)}), respectively.
	On the vertical axis we depict the number of measurements~$m$, from which we try to recover the $2$-sparse and $50$-dimensional sparse signal.
	As further parameters we choose the time horizon~$T=20$, discrete time step size~$\Delta t=0.01$, $\alpha=100$, $\beta=\infty$, $\theta=0$, $\kappa=1/\Delta t$, $\lambda_1=1$, $\lambda_2=0$ and $\sigma_1=\sigma_2=\sigma_3=0$.
	We discover that in both the convex and nonconvex setting reconstruction is feasible from already very few measurements.
	While increasing the number of optimizing particles has almost no effect for the convex optimization problem, in the nonconvex setting recovery benefits from more particles.
	Note that the separate columns and the left most column of the phase diagrams coincide, and are only depicted in this way to highlight that we compare also CBO.}
	\label{fig:CS_results}
\end{figure}
As parameters of the CBO algorithm, which is obtained as a Euler-Maruyama discretization of Equation~\eqref{eq:CBO_micro_with_memory}, we choose in both cases the time horizon~$T=20$, time step size~$\Delta t=0.01$, $\alpha=100$, $\beta=\infty$, $\theta=0$, $\kappa=1/\Delta t$, $\lambda_1=1$, $\lambda_2=0$ and $\sigma_1=\sigma_2=\sigma_3=0$.
We use either $N=10$ or $N=100$ particles, which is specified in the respective caption.
After running the CBO algorithm, a post-processing step is performed, in which the support of the suspected sparse vector is identified by checking which entries are not smaller than $0.01$ before the final sparse solution is then obtained by solving the linear system constrained to this support.

The depicted success probabilities are averaged over $100$ runs of CBO.
In Figure~\ref{fig:benefits_gradient} we solve the sparse recovery problem in the convex setting for an $8$-sparse $200$-dimensional signal with $p=1$ using CBO without and with gradient information with merely $10$ particles. 
We observe that gradient information is indispensable to be able to identify the correct sparse solution and standard CBO would fail in such task.
In Figure~\ref{fig:CS_results} we conduct a slightly lower-dimensional experiment with a $2$-sparse $50$-dimensional signal. 
Here our focus is to enter the nonconvex recovery regime by comparing the convex $\ell_1$-regularized with the nonconvex $\ell_{1/2}$-regularized problem.
We discover that in either case reconstruction is feasible from already very few measurements.
While increasing the number of optimizing particles has almost no effect for the convex optimization problem, in the nonconvex setting recovery benefits from more particles.
Furthermore, the nonconvex problem demands a more moderate choice of the strength of the gradient.

\section{Conclusion} \label{sec:conclusion}

In this paper we investigate a variant of consensus-based optimization (CBO) which incorporates memory effects and gradient information.
By developing further and generalizing the proof technique devised in~\cite{fornasier2021consensus,fornasier2021convergence}, we establish the global convergence of the underlying dynamics to the global minimizer~$\globmin$ of the objective function~$\CE$ in mean-field law.
To this end we analyze the time-evolution of the Wasserstein distance between the law of the mean-field CBO dynamics and a Dirac delta at the minimizer, and show its exponential decay in time.
Our result holds under minimal assumptions about the initial measure~$\rho_0$ and for a vast class of objective functions.
The numerical benefit of such additional terms, specifically the employed memory effects and gradient information, is demonstrated at the example of a benchmark function in optimization as well as at real-world applications such as compressed sensing and the training of neural networks for image classification.

By these means we demonstrate the versatility, flexibility and customizability of the class of CBO methods, both with respect to potential application areas in practice and modifications to the underlying optimization principles, while still being amenable to theoretical analysis.

\section*{Acknowledgements}
I would like to profusely thank Massimo Fornasier for many fruitful and stimulating discussions while I was preparing this manuscript.

This work has been funded by the German Federal Ministry of Education and Research and the Bavarian State Ministry for Science and the Arts.
I, the author of this work, take full responsibility for its content.
Furthermore, I acknowledge the financial support from the Technical University of Munich -- Institute for Ethics in Artificial Intelligence (IEAI).
Moreover, I gratefully acknowledge the compute and data resources provided by the Leibniz Supercomputing Centre (LRZ).

\bibliographystyle{abbrv}
\bibliography{biblio.bib}

\end{document}